 \renewcommand{\epsilon}{\varepsilon}
\newtheorem{theorem}{Theorem}[section]
 \newtheorem{lemma}[theorem]{Lemma}
 \newtheorem{Corollary}[theorem]{Corollary}
 \newtheorem{proposition}[theorem]{proposition}
 \newtheorem{Proposition}[theorem]{Proposition}
\newtheorem{deff}[theorem]{Definition}
 \newtheorem{rem}[theorem]{Remark}
 \newcommand{\bth}{\begin{theorem}}
 \newcommand{\ble}{\begin{lemma}}
 \newcommand{\bcor}{\begin{corr}}
 \newcommand{\bdeff}{\begin{deff}}
 \newcommand{\bprop}{\begin{proposition}}
 \newcommand{\ele}{\end{lemma}}
 \newcommand{\ecor}{\end{corr}}
 \newcommand{\edeff}{\end{deff}}
 \newcommand{\eprop}{\end{proposition}}
 \renewcommand{\Pi}{\varPi}
 \renewcommand{\epsilon}{\varepsilon}
\numberwithin{equation}{section}
\thanks{The first author was supported  by National Science Foundation of China(No.11771103) and  Guangxi Natural Science Foundation(No.2016GXNSFCA380010).
The second author was supported by Guangdong Natural Science Foundation (No.2016A030307008).}
\title
[Nonlinear second boundary conditions ]{On the second boundary value problem for  a class of fully nonlinear flows I }
\author{Rongli Huang}
\address{School of Mathematics and Statistics, Guangxi Normal University,
Guilin, Guangxi 541004, People's Republic of China,
 E-mail: ronglihuangmath@gxnu.edu.cn}
 \author{Yunhua Ye}
\address{School of Mathematics, Jiaying University,
Meizhou, Guangdong 514015, People's Republic of China,
 E-mail: mathyhye@163.com}
\date{}
\begin{document}
\maketitle
\begin{abstract}

In this paper, a class of fully nonlinear flows   with nonlinear Neumann type boundary condition is considered.
This problem was solved partly by the first author under the assumption that the flow is the parabolic type special Lagrangian equation in $\mathbb{R}^{2n}$.
 We show that  the convexity is preserved for solutions of the fully nonlinear parabolic equations and  prove the long time existence and convergence of the flow.
In particular, we can prescribe the second boundary value problems for a family of special Lagrangian graphs in  Euclidean and  pseudo-Euclidean space.
\end{abstract}

\let\thefootnote\relax\footnote{
2010 \textit{Mathematics Subject Classification}. Primary 53C44; Secondary 53A10.

\textit{Keywords and phrases}. special Lagrangian  graph; G$\hat{a}$teaux derivative; Hopf lemma.}

\section{Introduction}
In this paper we will study the long time existence and convergence of convex solutions solving
\begin{equation}\label{e1.1}
\frac{\partial u}{\partial t}=F(D^{2}u),\quad
 \mathrm{in}\quad \Omega_{T}=\Omega\times(0,T),
\end{equation}
associated with the second boundary value problem
\begin{equation}\label{e1.2}
Du(\Omega)=\tilde{\Omega},  \quad t>0,
\end{equation}
and the initial condition
\begin{equation}\label{e1.3}
u=u_{0},  \quad t=0,
\end{equation}
for given $F$, $u_{0}$, $\Omega$  and  $\tilde{\Omega}$. Specifically, $\Omega$, $\tilde{\Omega}$  are uniformly convex bounded
domains with smooth boundary in $\mathbb{R}^{n}$.
$F$ is a $C^{2+\alpha_{0}}$ function for some $0<\alpha_{0}<1$ defined on the cone $\Gamma_{+}$ of positive definite symmetric matrices, which is monotonically increasing and
\begin{equation}\label{e1.4aa}
\left\{ \begin{aligned}&F[A]:=F(\lambda_{1},\lambda_{2},\cdots, \lambda_{n})\\
&F(\cdots,\lambda_{i},\cdots,\lambda_{j},\cdots )=F(\cdots,\lambda_{j},\cdots,\lambda_{i},\cdots),\quad \text{for} \,\,\, 1\leq i<j\leq n,
\end{aligned} \right.
\end{equation}
with
$$\lambda_{1}\leq\lambda_{2}\leq\cdots\leq\lambda_{n}$$
being the eigenvalues of the $n\times n$ symmetric matrix $A$.

Our motivation for studying equations (\ref{e1.1})-(\ref{e1.3}) comes from  providing a parabolic approach to prescribe the boundary value problems
for a family of special Lagrangian graphs. In Euclidean space $\mathbb{R}^{2n}$,  Brendle-Warren's theorem \cite{SM} says that
there exists a diffeomorphism f: $\Omega\rightarrow\tilde{\Omega}$ such that
 $$\Sigma=\{(x,f(x))|x\in \Omega\}$$
is a special Lagrangian submanifold. It's well known that $\Sigma$ is special Lagrangian
if and only if the Lagrangian angle is a constant,  a proof of which can be found in Harvey and Lawson's work (see Proposition 2.17 in \cite{HL}).
To find a special Lagrangian graph $\Sigma$ with $f$ being  a diffeomorphism : $\Omega\rightarrow\tilde{\Omega}$
is equivalent to the following problem ($f=Du$):
\begin{equation}\label{e1.4a}
\left\{ \begin{aligned}\Sigma_{i=1}^{n}\arctan\lambda_{i}&=c,
&  x\in \Omega, \\
Du(\Omega)&=\tilde{\Omega}.
\end{aligned} \right.
\end{equation}
Using the continuity method of solving fully nonlinear elliptic equations with second boundary condition,
S. Brendle and M. Warren \cite{SM} obtained the existence and uniqueness of the solution to (\ref{e1.4a}).
In \cite{HR}, the first author studied the parabolic type special Lagrangian equation with second boundary condition:
\begin{equation}\label{e1.5}
\left\{ \begin{aligned}\frac{\partial u}{\partial t}&=\Sigma_{i=1}^{n}\arctan\lambda_{i},
& t>0,\quad x\in \Omega, \\
Du(\Omega)&=\tilde{\Omega}, &t>0,\qquad\qquad\\
 u&=u_{0}, & t=0,\quad x\in \Omega.
\end{aligned} \right.
\end{equation}
The first author proved long time existence and convergence of the flow  and then  showed
the existence result of special Lagrangian submanifold with the same boundary condition in $\mathbb{R}^{n}\times\mathbb{R}^{n}$.
In this paper,  we will see that there exists relative interesting geometric flow  with second boundary condition,
where the stable solutions correspond to a family of special Lagrangian graphs in pseudo-Euclidean space. To obtain the long time existence and convergence of the flow,
it is often important to know whether the convexity of
solutions of the evolution equations involved remain unchanged in time, seeing the work of  B. Andrews \cite{B},  P. L.Lions and M. Musiela \cite{LM}.

Let $\delta_{0}$ be the standard Euclidean metric and $g_{0}$ be the pseudo-Euclidean metric where
$$g_{0}=\frac{1}{2}\sum_{i}(dx_{i}\bigotimes dy_{i}+dy_{i}\bigotimes dx_{i}).$$
By taking linear combinations of the metrics $\delta_{0}$ and $g_{0}$,  M. Warren constructed a family of metrics on  $\mathbb{R}^{n}\times\mathbb{R}^{n}$:
\begin{equation}
g_{\tau}=\cos \tau g_{0}+\sin \tau \delta_{0}.
\end{equation}
In calibrated geometry,  we adapt the details of the extremal Lagrangian surfaces in ($\mathbb{R}^{n}\times\mathbb{R}^{n}, g_{\tau})$ which were firstly obtained by
 M. Warren \cite{MW} and arise as
solutions to a family of special Lagrangian equations:
\begin{equation}\label{e1.6}
\,\,\,(\tau=0) \qquad \sum_{i}\ln\lambda_{i}=c,
\end{equation}
\begin{equation}\label{e1.7}
\,\,\,\, (0<\tau<\frac{\pi}{4}) \qquad \sum_{i}\ln(\frac{\lambda_{i}+a-b}{\lambda_{i}+a+b})=c,
\end{equation}
\begin{equation}\label{e1.8}
\,\,\,(\tau=\frac{\pi}{4}) \qquad \sum_{i}\frac{1}{1+\lambda_{i}}=c,
\end{equation}
\begin{equation}\label{e1.9}
\,\,\,(\frac{\pi}{4}<\tau<\frac{\pi}{2}) \qquad \sum_{i}\arctan(\frac{\lambda_{i}+a-b}{\lambda_{i}+a+b})=c,
\end{equation}
\begin{equation}\label{e1.10}
\,\,\,(\tau=\frac{\pi}{2}) \qquad \sum_{i}\arctan\lambda_{i}=c,
\end{equation}
where $a=\cot\tau$  and $b=\sqrt{|\cot^{2}\tau-1|}$.

Here we consider the following fully nonlinear elliptic equations with second boundary condition:
\begin{equation}\label{e1.12}
\left\{ \begin{aligned}F_{\tau}(D^{2}u)&=c,
&  x\in \Omega, \\
Du(\Omega)&=\tilde{\Omega},
\end{aligned} \right.
\end{equation}
where
$$F_{\tau}(A)=\left\{ \begin{aligned}
&\sum_{i}\ln\lambda_{i} , \quad
\qquad\quad\qquad\qquad\quad\quad\qquad\quad  \tau=0, \\
& \sum_{i}\ln(\frac{\lambda_{i}+a-b}{\lambda_{i}+a+b}) \qquad\qquad\quad\quad\qquad\quad 0<\tau<\frac{\pi}{4},\\
& \sum_{i}\frac{1}{1+\lambda_{i}}, \qquad \qquad\qquad \qquad\qquad \qquad\tau=\frac{\pi}{4},\\
& \sum_{i}\arctan(\frac{\lambda_{i}+a-b}{\lambda_{i}+a+b}), \qquad \qquad\qquad \quad\frac{\pi}{4}<\tau<\frac{\pi}{2},\\
& \sum_{i}\arctan\lambda_{i}, \,\,\quad \qquad\qquad \qquad\qquad \qquad\tau=\frac{\pi}{2},
\end{aligned} \right.$$
and $Du$ are the special Lagrangian  diffeomorphisms from $\Omega$ to $\tilde{\Omega}$ in  Euclidean and pseudo-Euclidean space.
In dimension 2, P. Delano\"{e} \cite{P} obtained a  unique smooth solution for the second boundary value problem of the Monge-Amp$\grave{e}$re equation
with respect to $\tau=0$ in (\ref{e1.12}) if  both domains are uniformly convex. Later the generalization of P. Delano\"{e}'s theorem to higher dimensions was given by
L. Caffarelli \cite{L} and J. Urbas \cite{JU}. Using the parabolic methods,  O.C. Schn$\ddot{\text{u}}$rer and K. Smoczyk \cite{OK} also obtained the existence of solutions to (\ref{e1.12}) for $\tau=0$.
As far as $\tau=\frac{\pi}{2}$ is concerned, S. Brendle and M. Warren \cite{SM} proved the existence and uniqueness of the solution  by the elliptic methods and the first author
\cite{HR} obtained the existence of solution  by the parabolic methods.

The aim of the paper is to study the existence of solutions of the equations (\ref{e1.12}) for $\tau=\frac{\pi}{4}$, i.e.,
\begin{equation}\label{e1.13}
\left\{ \begin{aligned}\sum_{i}\frac{1}{1+\lambda_{i}}&=c,
&  x\in \Omega, \\
Du(\Omega)&=\tilde{\Omega}.
\end{aligned} \right.
\end{equation}
 We will see that $-F_{\frac{\pi}{4}}$ can be viewed as the map $F$ defined by ($\ref{e1.4aa}$).

For any $\mu_{1}>0, \mu_{2}>0$, we define
$$\Gamma^{+}_{]\mu_{1},\mu_{2}[}=\{(\lambda_{1},\lambda_{2},\cdots, \lambda_{n})|0\leq\lambda_{1}\leq\lambda_{2}\leq\cdots\leq\lambda_{n}, \lambda_{1}\leq \mu_{1}, \lambda_{n}\geq \mu_{2}\}.$$
We assume that there exist  positive constants $\lambda, \Lambda$ depending only on $\mu_{1}, \mu_{2}$ such that
for any $(\lambda_{1},\lambda_{2},\cdots, \lambda_{n})\in \Gamma^{+}_{]\mu_{1},\mu_{2}[}$:
\begin{equation}\label{e1.16}
 \Lambda\geq\sum^{n}_{i=1}\frac{\partial F}{\partial \lambda_{i}}\geq \lambda,
\end{equation}
\begin{equation}\label{e1.17}
  \Lambda\geq\sum^{n}_{i=1}\frac{\partial F}{\partial \lambda_{i}}\lambda^{2}_{i}\geq \lambda.
\end{equation}
In addition,
\begin{equation}\label{e1.15}
 F(A)\,\, and\,\,F^{*}(A)\triangleq-F(A^{-1})\,\,are\,\, concave\,\, on\,\,\Gamma_{+}.
\end{equation}
Moreover, we assume that there exist two functions $f_{1}$, $f_{2}$ which are monotonically increasing in $(0,+\infty)$ satisfying
\begin{equation}\label{e1.15a}
 f_{1}(\lambda_{1})\leq F(\lambda_{1},\lambda_{2},\cdots, \lambda_{n})\leq f_{2}(\lambda_{n})\quad (\forall\,\,\, 0\leq\lambda_{1}\leq\lambda_{2}\leq\cdots\leq\lambda_{n}),
\end{equation}
and for any $\Phi, \Psi \in \pounds$,
\begin{equation}\label{e1.15b}
\left\{ \begin{aligned}
 f_{1}(t)\leq \Phi\Rightarrow \exists t_{1}>0,\,\, t\leq  t_{1} ,\\
 f_{2}(t)\geq \Psi\Rightarrow \exists t_{2}>0,\,\, t\geq t_{2} ,
 \end{aligned} \right.
\end{equation}
where
$$\pounds=\{\Upsilon| \exists(\lambda_{1},\lambda_{2},\cdots, \lambda_{n}),  0<\lambda_{1}\leq\lambda_{2}\leq\cdots\leq\lambda_{n}, \Upsilon=F(\lambda_{1},\lambda_{2},\cdots, \lambda_{n})\}.$$

We can't expect that $F$ satisfies (\ref{e1.16}) and (\ref{e1.17}) for the universal constants $\lambda$ and $\Lambda $ on  the cone $\Gamma_{+}$. The reason is in the following:
for any  $\epsilon>0$, by taking  $$\lambda_{1}=\lambda_{2}=\cdots=\lambda_{n}=\epsilon, $$
we obtain
$$\epsilon^{2}\Lambda\geq\epsilon^{2}\sum^{n}_{i=1}\frac{\partial F}{\partial \lambda_{i}}
=\sum^{n}_{i=1}\frac{\partial F}{\partial \lambda_{i}}\lambda^{2}_{i}\geq\lambda.$$
 In view of the above fact, we introduce the domain $\Gamma^{+}_{]\mu_{1},\mu_{2}[}$ such that the  two conditions are compatible. As same as the statement in \cite{SM}, the range of $c$  should be limited for the solvability of the equation  (\ref{e1.12}) and  the condition (\ref{e1.15a}) reflect the issue  in some
way. On the other hand, in Section 3,  we will prove that there exist universal constants  $\mu_{1}$ and $\mu_{2}$ such that
$(\lambda_{1},\lambda_{2},\cdots, \lambda_{n})$ are always in  $\Gamma^{+}_{]\mu_{1},\mu_{2}[}$ under the flow. So $F$ satisfies the structure
conditions (\ref{e1.16}) and (\ref{e1.17}) for the constants $\lambda$ and $\Lambda$ under the flow.

By \cite{HR} and Corollary \ref{c5.3} of the present paper,  the  examples of functions satisfying  (\ref{e1.16})- (\ref{e1.15b}) are those corresponding to
\begin{equation*}
  \left\{ \begin{aligned}
 &F(\lambda_{1},\lambda_{2},\cdots, \lambda_{n}):=F_{\frac{\pi}{2}}(\lambda_{1},\lambda_{2},\cdots, \lambda_{n})=\sum_{i}\arctan\lambda_{i},\\
 &f_{1}(t)=n\arctan t,\,\,\,f_{2}(t)=n\arctan t ,
 \end{aligned} \right.
\end{equation*}
and
\begin{equation*}
  \left\{ \begin{aligned}
 &F(\lambda_{1},\lambda_{2},\cdots, \lambda_{n}):=-F_{\frac{\pi}{4}}(\lambda_{1},\lambda_{2},\cdots, \lambda_{n})=-\sum_{i}\frac{1}{1+\lambda_{i}},\\
 &f_{1}(t)=-\frac{n}{1+t},\,\,\,f_{2}(t)=-\frac{n}{1+t} .
 \end{aligned} \right.
\end{equation*}

The main results of this paper can be summarized as follows
\begin{theorem}\label{t1.1}
Assume that $\Omega$, $\tilde{\Omega}$ are bounded, uniformly convex domains with smooth boundary in $\mathbb{R}^{n}$, $0<\alpha_{0}<1$ and
the map $F$ satisfies (\ref{e1.4aa}),  (\ref{e1.16}), (\ref{e1.17}), (\ref{e1.15}), (\ref{e1.15a}), (\ref{e1.15b}).
  Then for any given initial function $u_{0}\in C^{2+\alpha_{0}}(\bar{\Omega})$
  which is   uniformly convex and satisfies $Du_{0}(\Omega)=\tilde{\Omega}$,  the  strictly convex solution of (\ref{e1.1})-(\ref{e1.3}) exists
  for all $t\geq 0$ and $u(\cdot,t)$ converges to a function $u^{\infty}(x,t)=u^\infty(x)+C_{\infty}\cdot t$ in $C^{1+\zeta}(\bar{\Omega})\cap C^{4+\alpha}(\bar{D})$ as $t\rightarrow\infty$
  for any $D\subset\subset\Omega$, $\zeta<1$,$0<\alpha<\alpha_{0}$, i.e.,

  $$\lim_{t\rightarrow+\infty}\|u(\cdot,t)-u^{\infty}(\cdot,t)\|_{C^{1+\zeta}(\bar{\Omega})}=0,\qquad
  \lim_{t\rightarrow+\infty}\|u(\cdot,t)-u^{\infty}(\cdot,t)\|_{C^{4+\alpha}(\bar{D})}=0.$$
And $u^{\infty}(x)\in C^{1+1}(\bar{\Omega})\cap C^{4+\alpha}(\Omega)$ is a solution of
\begin{equation}\label{e1.18}
\left\{ \begin{aligned}F(D^{2}u)&=C_{\infty},
&  x\in \Omega, \\
Du(\Omega)&=\tilde{\Omega}.
\end{aligned} \right.
\end{equation}
The constant $C_{\infty}$ depends only on $\Omega$, $\tilde{\Omega}$ and $F$. The solution to (\ref{e1.18}) is unique up to additions of constants.
\end{theorem}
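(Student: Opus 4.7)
The plan is to follow the standard parabolic-continuity scheme for oblique boundary value problems: establish short-time existence, show that the flow preserves uniform convexity, derive a priori estimates that are uniform in $t$, and finally pass to the limit $t\to\infty$.

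First I would set up short-time existence. Since $\tilde\Omega$ is smooth and uniformly convex, its defining function $h\in C^\infty(\mathbb{R}^n)$ is uniformly convex near $\partial\tilde\Omega$. The boundary condition $Du(\Omega)=\tilde\Omega$ can then be rephrased as the nonlinear oblique condition $h(Du)=0$ on $\partial\Omega\times(0,T)$, whose obliqueness $D h(Du)\cdot \nu_\Omega>0$ follows from convexity of $\tilde\Omega$, convexity of $u$, and the normal mapping relation at the boundary. Standard linearization (as in the works of S. Brendle, M. Warren, O. Schn\"urer, K. Smoczyk cited) combined with inverse-function-theorem arguments in parabolic H\"older spaces then gives a unique $C^{2+\alpha,1+\alpha/2}$ solution on a maximal time interval $[0,T^\ast)$.

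The second step, preserving strict convexity, is the main obstacle and presumably the content of the ``Hopf lemma'' advertised in the keywords. Assuming $u_0$ uniformly convex, I would differentiate the equation twice and apply the concavity of $F$ together with (\ref{e1.15}) to get a parabolic inequality for the smallest eigenvalue $\lambda_1(D^2u)$. In the interior the usual maximum principle argument gives a lower bound on $\lambda_1$; the difficulty is at $\partial\Omega$, where the eigenvalue may attempt to degenerate. Here I would use the boundary condition $h(Du)=0$: differentiating it tangentially twice and exploiting uniform convexity of $\tilde\Omega$ produces a one-sided bound that, combined with the obliqueness, lets a Hopf-type argument exclude boundary minimum points for $\lambda_1$. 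This step uses in an essential way that the spectrum stays in $\Gamma^+_{]\mu_1,\mu_2[}$, so the ellipticity constants $\lambda,\Lambda$ from (\ref{e1.16})--(\ref{e1.17}) are honest.

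With convexity preserved, the a priori bounds follow in the usual order. The $C^0$-estimate is immediate from a maximum-principle comparison with the stationary equation and the $C^1$ bound comes for free from $Du(\Omega)\subset\tilde\Omega$. For the $C^2$ bound, the lower bound on $D^2u$ was achieved in the previous step; a matching upper bound is obtained by applying an entirely analogous argument to the dual function via $F^\ast$, using the second half of (\ref{e1.15}) and the conditions (\ref{e1.15a})--(\ref{e1.15b}), which pin the spectrum into $\Gamma^+_{]\mu_1,\mu_2[}$ uniformly in time. Evans--Krylov then gives interior and boundary $C^{2+\alpha}$ estimates for the oblique problem, and Schauder bootstrapping yields $C^{4+\alpha}$ on any $D\subset\subset\Omega$ and $C^{1+\zeta}$ up to $\bar\Omega$; these estimates are independent of $T$, so $T^\ast=\infty$.

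For the long-time behaviour I would differentiate the equation in $t$: $v:=u_t$ satisfies a linear parabolic equation with oblique boundary condition $Dh(Du)\cdot Dv=0$ on $\partial\Omega$. The maximum principle then gives that $\max_\Omega v(\cdot,t)$ is non-increasing and $\min_\Omega v(\cdot,t)$ is non-decreasing, and a strong maximum/Hopf argument forces $\mathrm{osc}\,v(\cdot,t)\to 0$ at an exponential rate. Hence $u_t\to C_\infty$ uniformly, and the uniform-in-time $C^{4+\alpha}_{\mathrm{loc}}\cap C^{1+\zeta}(\bar\Omega)$ estimates give convergence of $u(\cdot,t)-C_\infty t$ to some $u^\infty\in C^{1+1}(\bar\Omega)\cap C^{4+\alpha}(\Omega)$, which by the convergence of $u_t$ solves (\ref{e1.18}). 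Uniqueness up to additive constants: if $u_1^\infty, u_2^\infty$ both solve (\ref{e1.18}), the difference $w=u_1^\infty-u_2^\infty$ satisfies a linear elliptic equation $a^{ij}w_{ij}=0$ with $a^{ij}$ uniformly elliptic and the oblique boundary condition $Dh(Du_2^\infty+sDw)\cdot Dw=0$ for some $s\in[0,1]$; the strong maximum principle and Hopf lemma then force $Dw\equiv 0$, giving uniqueness of both $C_\infty$ and $u^\infty$ modulo constants.
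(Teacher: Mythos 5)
Your sketch diverges from the paper in two places, and the first divergence conceals a genuine gap. The most serious problem is your convexity-preservation step. You propose to derive a parabolic inequality for the smallest eigenvalue $\lambda_1(D^2u)$ from the concavity of $F$ and then exclude boundary minima by a Hopf-type argument using the twice-tangentially-differentiated boundary condition. Neither half of this plan survives inspection. Interior first: concavity of $F$ makes $u_{\xi\xi}$ a subsolution of $\partial_t-F^{ij}\partial_{ij}$ (this is exactly Lemma \ref{l3.5}), which by the maximum principle yields an \emph{upper} bound on the Hessian, not a lower bound on $\lambda_1$. Boundary second: differentiating $h(Du)=0$ twice along a tangential direction (cf.\ (\ref{e3.17}) and the calculation in Lemma \ref{l3.7}) produces the relation $h_{p_k}u_{k\tau\tau}=-h_{p_kp_l}u_{k\tau}u_{l\tau}\ge\theta u_{\tau\tau}^2$, i.e.\ information about the third-order quantity $u_{\beta\tau\tau}$ that, combined with a barrier, yields an \emph{upper} bound on $u_{\tau\tau}$; it gives no lower bound on any second derivative, and the uniform convexity of $\tilde\Omega$ simply controls the wrong side. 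The paper's route is the reverse of yours: it first proves the uniform strict obliqueness estimate (Lemma \ref{l3.4}, which your sketch omits entirely but which is indispensable for all the boundary barriers and needs its own delicate argument exploiting (\ref{e1.16})--(\ref{e1.17})); it then deduces the upper bound on $D^2u$ (Lemmas \ref{l3.5}--\ref{l3.7}); and it obtains the lower bound by applying the identical machinery to the Legendre transform $u^*$, which solves the flow with $F^*$ in place of $F$ — this is precisely why (\ref{e1.15}) demands that both $F$ and $F^*$ be concave. Also note that the spectral confinement $\lambda_1\le\mu_1$, $\lambda_n\ge\mu_2$ (Lemma \ref{l5.1b}), which is what makes the constants in (\ref{e1.16})--(\ref{e1.17}) uniform along the flow, comes from the $\dot u$-estimate together with (\ref{e1.15a})--(\ref{e1.15b}), not from a Hopf argument on an eigenvalue.

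The second divergence is in the convergence step and is more stylistic than fatal, but as written it also has a hole. You differentiate in $t$ and assert that $\mathrm{osc}\,u_t(\cdot,t)\to 0$ ``at an exponential rate'' from a strong maximum/Hopf argument. A strong maximum principle only gives strict monotonicity of $\max u_t$ and $\min u_t$; extracting a rate requires a Harnack inequality adapted to the oblique boundary condition, which you do not supply. The paper avoids this altogether via the Kitagawa/Schn\"urer trick: set $v(x,t)=u(x,t)-u(x,t+t_0)$, observe that $v$ solves a linear uniformly parabolic equation (\ref{e4.20}) with a linear uniformly oblique boundary condition (\ref{e4.200}) once $t_0$ is small, and invoke the translating-solution machinery of \cite{OC} to obtain $u^\infty(x,t)=u^\infty(x)+C_\infty t$ with $C^0$ convergence, which is then upgraded to $C^{1+\zeta}(\bar\Omega)$ and $C^{4+\alpha}_{\mathrm{loc}}$ convergence by interpolation. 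Your oscillation-decay approach could likely be made rigorous with more work, but the exponential-rate claim is unsubstantiated as stated.
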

\begin{Corollary}\label{r1.2}
Let  $\Omega$, $\tilde{\Omega}$ and $F$ satisfy the conditions in the above theorem. If $F$ is $C^{\infty}$, then there exist $u^{\infty}(x)\in C^{\infty}(\bar{\Omega})$ and the constant $C_{\infty}$ which satisfy (\ref{e1.18}).
\end{Corollary}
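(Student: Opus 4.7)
By Theorem \ref{t1.1} we already have a solution $u^{\infty}\in C^{1+1}(\bar{\Omega})\cap C^{4+\alpha}(\Omega)$ of the stationary problem (\ref{e1.18}), which is uniformly convex with $\lambda_{i}(D^{2}u^{\infty})\in \Gamma^{+}_{]\mu_{1},\mu_{2}[}$ by the pinching produced along the flow in Section 3. The content of the corollary is therefore purely a regularity statement for this stationary oblique boundary value problem; the plan is to upgrade boundary regularity from $C^{1+1}$ to $C^{2+\alpha}$, and then bootstrap to $C^{\infty}(\bar{\Omega})$ using linear Schauder theory.

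First, I would rewrite the second boundary condition $Du(\Omega)=\tilde{\Omega}$ as a classical oblique derivative condition. Since $\partial\tilde{\Omega}$ is smooth and uniformly convex, choose a defining function $h\in C^{\infty}(\mathbb{R}^{n})$ with $\tilde{\Omega}=\{h<0\}$, $|Dh|\neq 0$ on $\partial\tilde{\Omega}$, and $D^{2}h\geq c_{0}I$ near $\partial\tilde{\Omega}$. Then (\ref{e1.18}) is equivalent to
\begin{equation*}
F(D^{2}u^{\infty})=C_{\infty}\text{ in }\Omega,\qquad h(Du^{\infty})=0\text{ on }\partial\Omega.
\end{equation*}
Differentiating the boundary condition tangentially shows it is smooth and oblique: at each boundary point one has $D_{k}h(Du^{\infty})\,u^{\infty}_{ki}\nu^{i}>0$, the obliqueness following from uniform convexity of $u^{\infty}$ together with the uniform convexity of $\tilde{\Omega}$ (as used for the Hopf-type boundary estimates earlier in the paper).

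The main step is a boundary $C^{2+\alpha}$ estimate. Because $F$ is concave on $\Gamma_{+}$ and smooth, the equation $F(D^{2}u^{\infty})=C_{\infty}$ is a uniformly elliptic concave fully nonlinear equation on the relevant range of eigenvalues (ellipticity constants $\lambda,\Lambda$ from (\ref{e1.16})--(\ref{e1.17})). Combined with the smooth oblique boundary condition just derived, Krylov's boundary regularity theorem for concave fully nonlinear elliptic equations with oblique boundary data (also refined by Lieberman and Trudinger) yields $u^{\infty}\in C^{2+\alpha}(\bar{\Omega})$. This is the step I expect to be the main technical obstacle, because Krylov's theorem is usually stated for Dirichlet problems and one must appeal to the oblique version; fortunately our boundary condition is already of the form required, and the uniform ellipticity of the linearization has been verified.

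With $u^{\infty}\in C^{2+\alpha}(\bar{\Omega})$ in hand, a standard bootstrap finishes the proof. Differentiate the equation with respect to $x_{k}$: setting $w=\partial_{k}u^{\infty}$,
\begin{equation*}
a^{ij}(x)\,D_{ij}w=0\text{ in }\Omega,\qquad D_{\ell}h(Du^{\infty})\,D_{\ell}w=0\text{ on }\partial\Omega,
\end{equation*}
where $a^{ij}(x)=F^{ij}(D^{2}u^{\infty}(x))$. The coefficients $a^{ij}$ belong to $C^{\alpha}(\bar{\Omega})$, the boundary operator is smooth and oblique, so linear Schauder theory for oblique problems gives $w\in C^{2+\alpha}(\bar{\Omega})$, i.e.\ $u^{\infty}\in C^{3+\alpha}(\bar{\Omega})$. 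Since $F\in C^{\infty}$, the coefficients are now $C^{1+\alpha}$, and iterating this differentiation--Schauder argument yields $u^{\infty}\in C^{k+\alpha}(\bar{\Omega})$ for every $k$, hence $u^{\infty}\in C^{\infty}(\bar{\Omega})$, as required.
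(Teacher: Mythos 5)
Your proposal follows the same overall plan as the paper — recast the second boundary condition as the scalar oblique condition $h(Du^{\infty})=0$, and then bootstrap boundary regularity by differentiating and applying linear Schauder theory for oblique derivative problems — but you insert one genuinely extra step that the paper omits: you first invoke a Krylov/Lieberman--Trudinger type boundary $C^{2+\alpha}$ estimate for concave, uniformly elliptic fully nonlinear equations with oblique boundary data to upgrade $u^{\infty}$ from $C^{1,1}(\bar{\Omega})$ to $C^{2+\alpha}(\bar{\Omega})$. The paper instead goes straight from Evans--Krylov and interior Schauder (giving only $u^{\infty}\in C^{\infty}(\Omega)$) to the linearized oblique problem for $w=u_{l}$ and cites GT Theorem 6.30. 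Your extra step is valuable: as the paper is written, applying linear oblique Schauder to $w=u_{l}$ requires $F^{ij}(D^{2}u^{\infty})\in C^{\alpha}(\bar{\Omega})$, which $C^{1,1}(\bar{\Omega})$ alone does not deliver; one either needs your Krylov step, or must extract $C^{2+\alpha'}(\bar{\Omega})$ convergence of $u(\cdot,t)$ along a subsequence from the uniform parabolic Schauder bounds (which the paper has but does not say it is using here). One small inaccuracy you should fix: the obliqueness condition is $h_{p_{k}}(Du^{\infty})\nu_{k}>0$, not $h_{p_{k}}(Du^{\infty})u^{\infty}_{ki}\nu^{i}>0$; the quantity you wrote is $\partial_{\nu}\bigl(h(Du^{\infty})\bigr)$, which is a different (positive) thing. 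Finally, a shared caveat with the paper's own proof: for $w=\partial_{k}u^{\infty}$ with a fixed coordinate direction $e_{k}$, the boundary expression $h_{p_{\ell}}(Du^{\infty})\,\partial_{\ell}w$ equals $\partial_{k}\bigl(h(Du^{\infty})\bigr)$, which does not vanish on $\partial\Omega$ (it is the directional derivative of the defining function $h\circ Du^{\infty}$ in the direction $e_{k}$, nonzero off tangency). The standard fix — flatten $\partial\Omega$ locally, differentiate tangentially so the boundary condition is genuinely homogeneous, and recover the remaining normal derivatives of $D^{2}u^{\infty}$ from the equation — should be understood as implicit; since the paper makes exactly the same slip, this is a routine omission rather than a defect special to your argument.
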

\begin{rem}\label{r1.3}
Theorem 3.3 of B.Andrews in \cite{B} proved that the convexity for solutions of fully nonlinear parabolic equations under conditions
(\ref{e1.4aa}) and  (\ref{e1.15}) can be preserved if the solutions on the boundary are strictly convex. Here we do not make any assumption of $u$ to be convex on the boundary.
\end{rem}
\begin{rem}\label{r1.4}
The first author \cite{HRO} used the  elliptic methods to obtain the similar results of (\ref{e1.18}),
but the structure conditions of F in \cite{HRO} are more complicated.
\end{rem}
As a consequence of Theorem \ref{t1.1}, we will prove the existence and uniqueness of classical solution of (\ref{e1.13}).
\begin{theorem}\label{t1.2}
Let $\Omega$, $\tilde{\Omega}$ be bounded, uniformly convex domains with smooth boundary in $\mathbb{R}^{n}$.
  Then  there exist  $u\in  C^{\infty}(\bar{\Omega})$  and the constant $c$ such that
  $u$ is a solution of  the second boundary value problem (\ref{e1.13}). The solution to (\ref{e1.13}) is unique up to additions of constants.
\end{theorem}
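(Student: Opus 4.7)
The plan is to reduce Theorem \ref{t1.2} to Theorem \ref{t1.1} (sharpened by Corollary \ref{r1.2}) applied to
$$F(\lambda_{1},\dots,\lambda_{n}) := -F_{\frac{\pi}{4}}(\lambda_{1},\dots,\lambda_{n}) = -\sum_{i=1}^{n}\frac{1}{1+\lambda_{i}}.$$
Because $F$ is of class $C^{\infty}$, Corollary \ref{r1.2} produces a stationary solution $u^{\infty}\in C^{\infty}(\bar{\Omega})$ of (\ref{e1.18}) with constant $C_{\infty}$; setting $c = -C_{\infty}$ then delivers a smooth solution of (\ref{e1.13}), and uniqueness up to additive constants is inherited verbatim from the last statement of Theorem \ref{t1.1}.

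The structural hypotheses (\ref{e1.4aa}) and (\ref{e1.16})--(\ref{e1.15b}) for this $F$ are collected in Corollary \ref{c5.3}; for orientation, here is why they hold. Symmetry is immediate and $\partial F/\partial\lambda_{i} = (1+\lambda_{i})^{-2} > 0$ gives monotonicity. On $\Gamma^{+}_{]\mu_{1},\mu_{2}[}$ the inequalities $\lambda_{1}\le\mu_{1}$, $\lambda_{n}\ge\mu_{2}$, and $\lambda_{i}\ge 0$ sandwich
$$(1+\mu_{1})^{-2}\le \sum_{i}(1+\lambda_{i})^{-2}\le n,\qquad \frac{\mu_{2}^{2}}{(1+\mu_{2})^{2}}\le \sum_{i}\frac{\lambda_{i}^{2}}{(1+\lambda_{i})^{2}}\le n,$$
yielding (\ref{e1.16}) and (\ref{e1.17}). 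Concavity of $F$ on $\Gamma_{+}$ follows from the concavity of $\lambda\mapsto -(1+\lambda)^{-1}$, while the identity
$$F^{*}(A) = -F(A^{-1}) = \sum_{i}\frac{\lambda_{i}}{1+\lambda_{i}} = n + F(A)$$
transfers concavity to $F^{*}$, proving (\ref{e1.15}). Finally $f_{1}(t) = f_{2}(t) = -n/(1+t)$ is strictly increasing and sandwiches $F$ because $(1+\lambda_{n})^{-1}\le (1+\lambda_{i})^{-1}\le (1+\lambda_{1})^{-1}$, giving (\ref{e1.15a}); since $\pounds\subset (-n,0)$, (\ref{e1.15b}) is solved explicitly by $t_{1}=-n/\Phi-1$ and $t_{2}=-n/\Psi-1$.

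It remains to supply an admissible initial datum $u_{0}\in C^{2+\alpha_{0}}(\bar{\Omega})$ that is uniformly convex with $Du_{0}(\Omega)=\tilde{\Omega}$. Such a $u_{0}$ is furnished by the Caffarelli--Urbas theory for the second boundary value problem of the Monge--Amp\`ere equation $\det D^{2}u_{0} = |\tilde{\Omega}|/|\Omega|$ with $Du_{0}(\Omega)=\tilde{\Omega}$, which by the mass-balance identity $\int_{\Omega}\det D^{2}u_{0} = |\tilde{\Omega}|$ is solvable and yields a smooth uniformly convex map. With $u_{0}$ fixed, Theorem \ref{t1.1} together with Corollary \ref{r1.2} delivers the desired $u^{\infty}$ and $C_{\infty}$. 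The main technical obstacles---convexity preservation along the flow and long-time existence and convergence for the flow driven by $-F_{\frac{\pi}{4}}$---are already absorbed into Theorem \ref{t1.1}; the only genuinely novel ingredient at the level of the reduction is the structure-condition check of Corollary \ref{c5.3}.
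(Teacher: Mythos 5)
Your proposal is correct and follows the paper's own route: reduce to Theorem \ref{t1.1} and Corollary \ref{r1.2} with $F=-F_{\pi/4}$, using Lemma \ref{l5.0} and Corollary \ref{c5.3} to verify the structure conditions (your identity $F^{*}=n+F$ is a clean shortcut equivalent to the paper's direct Hessian computation in Lemma \ref{l5.0}). The only addition beyond the paper's one-line proof is your explicit construction of an admissible initial datum via Caffarelli--Urbas, which the paper leaves implicit; this is a sensible and accurate gap-filling rather than a different argument.
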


\begin{deff}\label{d1.1}
 We say that $\Sigma=\{(x,f(x))|x\in \Omega\}$ is a special Lagrangian graph in $(\mathbb{R}^{n}\times\mathbb{R}^{n}, g_{\tau})$ if $$f=Du$$
  and $u$ satisfies
 $$F_{\tau}(D^{2}u (x))=c,\quad x\in \Omega.$$
\end{deff}

By the above result, we can extend Brendle-Warren's  theorem \cite{SM} to the case in ($\mathbb{R}^{n}\times\mathbb{R}^{n}, g_{\frac{\pi}{4}}$):
\begin{Corollary}\label{c1.4}
Let $\Omega$, $\tilde{\Omega}$ be bounded, uniformly convex domains with smooth boundary in $\mathbb{R}^{n}$. Then
there exists a diffeomorphism f: $\Omega\rightarrow\tilde{\Omega}$ such that
 $$\Sigma=\{(x,f(x))|x\in \Omega\}$$
is a special Lagrangian graph in ($\mathbb{R}^{n}\times\mathbb{R}^{n}, g_{\frac{\pi}{4}}$).
\end{Corollary}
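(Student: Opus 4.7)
The plan is to derive Corollary \ref{c1.4} directly from Theorem \ref{t1.2} together with Definition \ref{d1.1}, once we have verified that the map $Du$ produced by Theorem \ref{t1.2} is in fact a diffeomorphism from $\Omega$ onto $\tilde{\Omega}$.

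First, I would invoke Theorem \ref{t1.2} to obtain a function $u\in C^\infty(\bar\Omega)$ and a constant $c$ such that
\begin{equation*}
\sum_{i=1}^{n}\frac{1}{1+\lambda_i(D^2u)}=c\quad\text{in }\Omega,\qquad Du(\Omega)=\tilde{\Omega}.
\end{equation*}
Since this $u$ arises as the infinite-time limit of the strictly convex solution produced in Theorem \ref{t1.1}, the eigenvalues of $D^2u$ lie in the admissible cone $\Gamma^{+}_{]\mu_1,\mu_2[}$ uniformly, so $D^2u$ is positive definite on $\bar\Omega$. In particular $u$ is uniformly convex up to the boundary.

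Next, I would set $f=Du$ and verify that $f\colon\bar\Omega\to\bar{\tilde\Omega}$ is a diffeomorphism. Uniform convexity of $u$ gives injectivity of $f$ on $\bar\Omega$ by the standard argument (if $Du(x)=Du(y)$ then along the segment $[x,y]$ the convex function $t\mapsto u(x+t(y-x))$ has equal endpoint derivatives, contradicting strict convexity unless $x=y$). Smoothness of $u$ up to the boundary together with $\det D^2u>0$ shows $f$ is a smooth local diffeomorphism. Finally, the second boundary condition $Du(\Omega)=\tilde{\Omega}$ identifies the image, so $f\colon\Omega\to\tilde{\Omega}$ is a smooth bijection whose differential is everywhere invertible, hence a $C^\infty$ diffeomorphism.

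With $f=Du$ established as a diffeomorphism from $\Omega$ onto $\tilde\Omega$, Definition \ref{d1.1} immediately identifies $\Sigma=\{(x,f(x))\mid x\in\Omega\}$ as a special Lagrangian graph in $(\mathbb{R}^n\times\mathbb{R}^n,g_{\pi/4})$, completing the proof. No genuine obstacle arises at this stage: all the analytic work has been absorbed into Theorem \ref{t1.2}, and what remains is a purely formal passage from the PDE solution to its geometric interpretation. The only point that deserves being stated explicitly is the verification that strict convexity of $u$ up to $\partial\Omega$ — which is guaranteed by the $C^{1+1}(\bar\Omega)$ boundary regularity in Theorem \ref{t1.1} combined with the uniform lower bound on the eigenvalues inside $\Gamma^{+}_{]\mu_1,\mu_2[}$ — upgrades $Du$ from a bijection between open sets to an honest diffeomorphism.
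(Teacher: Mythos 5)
Your proposal is correct and follows essentially the same route as the paper: take the $C^\infty(\bar\Omega)$ convex solution of \eqref{e1.13} provided by Theorem \ref{t1.2}, set $f=Du$, observe that $f$ is a diffeomorphism from $\Omega$ onto $\tilde\Omega$, and then identify $\Sigma$ as a special Lagrangian graph in $(\mathbb{R}^n\times\mathbb{R}^n,g_{\pi/4})$ via Definition \ref{d1.1} (the paper cites Warren \cite{MW} for the same identification). The only difference is that you spell out the standard uniform-convexity argument for injectivity of $Du$ and the local-diffeomorphism step, which the paper leaves implicit; and a small imprecision in your last sentence is that the relevant regularity here comes from Theorem \ref{t1.2}, which already gives $u\in C^\infty(\bar\Omega)$, not merely the $C^{1+1}(\bar\Omega)$ bound of Theorem \ref{t1.1}.
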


The monotone increasing of $F$  implies the ellipticity of the equation (\ref{e1.18}). As  the statement in \cite{SM}, the range of $c$ should be limited for the solvability of the equation  (\ref{e1.18}), so the conditions (\ref{e1.15a})-(\ref{e1.15b}) are just  natural  in some degree.    Condition (\ref{e1.15}) is  essentially
the ones used in \cite{B}  to preserve the convexity for the solutions of the fully nonlinear parabolic equations. And conditions (\ref{e1.16}), (\ref{e1.17})
are essential part to carry out the upcoming $C^{2}$ priori estimates for the solutions on the boundary.

The rest of this paper is organized as follows. In Section 2, we establish the short time existence result to the flow (\ref{e1.1})-(\ref{e1.3}) by
using the inverse function theorem. In Section 3, we collect necessary preliminaries which will be used in the proof of Theorem \ref{t1.1}.
The techniques used in this section are reflective of those in \cite{HR}, \cite{JU},  and \cite{OK} to the second boundary value problems for fully nonlinear differential equations.
We use  barrier arguments to obtain the $C^{2}$ upper bound estimates on the boundary. Here the structure conditions (\ref{e1.16}), (\ref{e1.17})
play an important role to construct suitable auxiliary functions as barriers.  The conditions  are also used to establish the $C^{2}$ lower bounds for the solutions on the boundary.
   In Section 4, we give the proof of Theorem \ref{t1.1} and Corollary \ref{r1.2}. In Section 5, we replace $F$ by $-F_{\frac{\pi}{4}}$
 in (\ref{e1.1}) and solve the corresponding problems. Under the flow (\ref{e1.1})-(\ref{e1.3}), we  show that $-F_{\frac{\pi}{4}}$ satisfies the conditions  (\ref{e1.16})-(\ref{e1.15b}),
 and then we  give the proof of  Theorem \ref{t1.2} and Corollary \ref{c1.4}. In Section 6, we show that the applications of  Theorem \ref{t1.1}  include a number of previously
established results of various authors as consequences.

\section{ The short time existence of the
parabolic flow }

 Throughout the following,  Einstein's convention of
 summation over repeated indices will be adopted.
Denote $$u_{i}=\dfrac{\partial u}{\partial x_{i}},
u_{ij}=\dfrac{\partial^{2}u}{\partial x_{i}\partial x_{j}},
u_{ijk}=\dfrac{\partial^{3}u}{\partial x_{i}\partial x_{j}\partial
x_{k}}, \cdots $$ and
$$[u^{ij}]=[ u_{ij}]^{-1},\,\,\,
F^{ij}(D^{2}u)=\frac{\partial F}{\partial u_{ij}},\,\,\, \Omega_{T}=\Omega\times(0,T). $$

We first recall the relevant Sobolev spaces ( cf. Chapter 1 in  \cite{W}). For every multi-index $\beta=(\beta_{1},\beta_{2},\cdots, \beta_{n})(\beta_{i}\geq 0$
for $i=1,2,\cdots,n)$ with length $|\beta|=\sum^{n}_{i=1}\beta_{i}$ and $j\geq 0$, we set
$$D^{\beta}u=\frac{\partial^{|\beta|}u}{\partial x_{1}^{\beta_{1}}\partial x_{2}^{\beta_{2}}\cdots\partial x_{n}^{\beta_{n}}},\qquad
D^{\beta}D_{t}^{j}u=\frac{\partial^{|\beta|+j}u}{\partial x_{1}^{\beta_{1}}\partial x_{2}^{\beta_{2}}\cdots\partial x_{n}^{\beta_{n}}\partial t^{j}}.$$
We remind the definition of the usual functional spaces ($k\geq 0$):\\
\\$C^{k}(\Omega)=\{u:\Omega\rightarrow \mathbb{R}|\forall \beta$, $|\beta|\leq k, D^{\beta}u$ is continuous in $\Omega\}$,\\
\\$C^{k}(\bar{\Omega})=\{u\in C^{k}(\Omega) |\forall \beta$, $|\beta|\leq k, D^{\beta}u$ can be extended by continuity to $\partial\Omega\}$;\\
\\$C^{k,\frac{k}{2}}(\Omega_{T})=\{u:\Omega_{T}\rightarrow \mathbb{R}|\forall \beta,j\geq 0$, $|\beta|+2j\leq k, D^{\beta}D^{j}_{t}u$ is continuous in $\Omega_{T}\}$,\\
\\$C^{k,\frac{k}{2}}(\bar{\Omega}_{T})=\{u\in C^{k,\frac{k}{2}}(\Omega_{T}) |\forall \beta,j\geq 0$, $|\beta|+2j\leq k, D^{\beta}D^{j}_{t}u$ can be extended by continuity to $\partial\Omega_{T}\}$.\\
\\
Moreover $C^{k}(\bar{\Omega}), C^{k,\frac{k}{2}}(\bar{\Omega}_{T})$ are Banach spaces equipped with the norm respectively:
$$\|u\|_{C^{k}(\bar{\Omega})}=\sum^{k}_{i=0}\sup_{|\beta|=i}\sup_{\bar{\Omega}}|D^{\beta}u|,$$
$$\|u\|_{C^{k,\frac{k}{2}}(\bar{\Omega}_{T})}=\sum_{j\geq 0,|\beta|+2j\leq k}\sup_{\bar{\Omega}_{T}}|D^{\beta}D^{j}_{t}u|.$$
We now remind the definition of H$\ddot{\text{o}}$lder spaces. Let $\alpha\in [0,1]$. We define the $\alpha$-H$\ddot{\text{o}}$lder coefficient of $u$ in $\Omega$:
$$[u]_{\alpha, \Omega}=\sup_{x\neq y, x,y\in\Omega}\frac{|u(x)-u(y)|}{|x-y|^{\alpha}}.$$
If $[u]_{\alpha, \Omega}<+\infty,$ then we call $u$ H$\ddot{\text{o}}$lder continuous with exponent $\alpha$ in $\Omega.$
If there are no ambiguity about the domains $\Omega$, we denote $[u]_{\alpha, \Omega}$ by
$[u]_{\alpha}$.
Similarly, the $(\alpha,\frac{\alpha}{2})$-H$\ddot{\text{o}}$lder coefficient of $u$ in $\Omega_{T}$ can be defined by
$$[u]_{\alpha,\frac{\alpha}{2}, \Omega_{T}}=\sup_{(x,t)\neq (y,\tau), (x,t),(y,\tau)\in\Omega_{T}}\frac{|u(x,t)-u(y,\tau)|}{\max\{|x-y|^{\alpha},|t-\tau|^{\frac{\alpha}{2}}\}},$$
and $u$ is H$\ddot{\text{o}}$lder continuous with exponent $(\alpha,\frac{\alpha}{2})$ in $\Omega_{T}$  if $[u]_{\alpha,\frac{\alpha}{2}, \Omega_{T}} <+\infty.$
Meanwhile, we denote $[u]_{\alpha,\frac{\alpha}{2}, \Omega_{T}}$ by  $[u]_{\alpha,\frac{\alpha}{2}}$.   We denote $C^{k+\alpha}(\bar{\Omega})$ as the set of functions belonging to
$C^{k}(\bar{\Omega})$ whose $k$-order partial derivatives are H$\ddot{\text{o}}$lder continuous with exponent $\alpha$ in $\Omega$ and $C^{k+\alpha}(\bar{\Omega})$ is
a Banach space equipped with the following norm:
$$\|u\|_{C^{k+\alpha}(\bar{\Omega})}=\|u\|_{C^{k}(\bar{\Omega})}+[u]_{k+\alpha}$$
where
$$[u]_{k+\alpha}=\sum_{|\beta|=k}[D^{\beta}u]_{\alpha}.$$
Likewise, we denote $C^{k+\alpha,\frac{k+\alpha}{2}}(\bar{\Omega}_{T})$ as the set of functions belonging to
$C^{k,\frac{k}{2}}(\bar{\Omega}_{T})$ whose $(k,\frac{k}{2})$-order partial derivatives are H$\ddot{\text{o}}$lder continuous with exponent $(\alpha,\frac{\alpha}{2})$ in $\Omega_{T}$ and $C^{k+\alpha,\frac{k+\alpha}{2}}(\bar{\Omega}_{T})$ is
a Banach space equipped with the following norm:
$$\|u\|_{C^{k+\alpha,\frac{k+\alpha}{2}}(\bar{\Omega}_{T})}=\|u\|_{C^{k,\frac{k}{2}}(\bar{\Omega}_{T})}+[u]_{k+\alpha, \frac{k+\alpha}{2}} , $$
where
$$[u]_{k+\alpha,\frac{k+\alpha}{2}}=\sum_{|\beta|+2j=k}[D^{\beta}D^{j}_{t}u]_{\alpha,\frac{\alpha}{2}}.$$

By the methods on the second boundary value problems for equations of Monge-Amp\`{e}re type \cite{JU},
the parabolic boundary condition in (\ref{e1.2}) can be reformulated as
$$h(Du)=0,\qquad x\in \partial\Omega,\quad t>0,$$
where $h$ is a smooth function on $\bar{\tilde{\Omega}}$:
$$\tilde{\Omega}=\{p\in\mathbb{R}^{n} |h(p)>0\},\qquad |Dh|_{{\partial\tilde{\Omega}}}=1.$$
The so called boundary defining function is strictly concave, i.e., $\exists \theta>0$,
$$ \frac{\partial^{2}h}{\partial y_{i}\partial y_{j}}\xi_{i}\xi_{j}\leq
-\theta|\xi|^{2},\qquad \text{for}\quad \forall p=(y_{1}, y_{2},\cdots, y_{n})\in \tilde{\Omega},\quad
 \xi=(\xi_{1}, \xi_{2},\cdots, \xi_{n})\in \mathbb{R}^{n}.$$
We also give the boundary defining function according to $\Omega$ (cf.\cite{SM}):
$$\Omega=\{p\in\mathbb{R}^{n} |\tilde{h}(p)>0\},\qquad |D\tilde{h}|_{{\partial\Omega}}=1,$$
$$\exists\tilde{\theta}>0,\quad\frac{\partial^{2}\tilde{h}}{\partial y_{i}\partial y_{j}}\xi_{i}\xi_{j}\leq
-\tilde{\theta}|\xi|^{2},\quad \text{for}\quad \forall p=(y_{1}, y_{2},\cdots, y_{n})\in \Omega,\quad
 \xi=(\xi_{1}, \xi_{2},\cdots, \xi_{n})\in \mathbb{R}^{n}.$$
Thus the parabolic flow (\ref{e1.1})-(\ref{e1.3}) is equivalent to the evolution problem:
\begin{equation}\label{e2.1}
\left\{ \begin{aligned}\frac{\partial u}{\partial t}&=F(D^{2}u),
& t>0,\quad x\in \Omega, \\
h(Du)&=0,& \qquad t>0,\quad x\in\partial\Omega,\\
 u&=u_{0}, & \qquad\quad t=0,\quad x\in \Omega.
\end{aligned} \right.
\end{equation}
To establish the short time existence of classical solutions of (\ref{e2.1}),  we use the inverse function theorem
in Fr$\acute{e}$chet spaces and the theory of linear parabolic equations for oblique boundary condition. The method is along the idea of
proving  the short time existence of convex solutions on the second boundary value problem for Lagrangian mean curvature flow \cite{HR}.   We include the details for the convenience
of the readers.
\begin{lemma}(I. Ekeland, see Theorem 2 in  \cite{IE}.)\label{l1.10}
Let $X$ and $Y$ be Banach spaces. Suppose $$\hbar: X\rightarrow Y$$
 is continuous and G$\hat{a}$teaux-differentiable, with
$\hbar[0]=0$. Assume that the derivative $D\hbar[x]$ has a right inverse $\mathrm{T}[x]$, uniformly bounded in
a neighbourhood of $0$ in $X$:
$$\forall y\in Y,\quad D\hbar[x]\mathrm{T}[x]y=y ;$$
$$\|x\|\leq R\Longrightarrow \|\mathrm{T}[x]\|\leq m.$$
For every $y\in Y$  if
$$\parallel y\parallel<\frac{R}{m},$$
then there is some $x\in X$ such that :
$$\|x\|< R$$
and
$$\hbar[x]=y.$$
\end{lemma}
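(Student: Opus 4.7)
The plan is to derive this as a standard consequence of Ekeland's variational principle applied to the continuous nonnegative functional
$$\phi(x)=\|\hbar[x]-y\|_Y,$$
on the complete metric space $X$. Given $y\in Y$ with $\|y\|<R/m$, set $\alpha=\|y\|$ and pick an intermediate constant $k$ with $m<k<R/\alpha$, so that $k\alpha<R$ and $1/k<1/m$. Note $\phi(0)=\alpha$ and $\phi\ge0$, so $0$ is an $\alpha$-almost-minimizer of $\phi$ on $X$. Ekeland's principle (strong form with parameters $\epsilon=\alpha$, $\lambda=k\alpha$) then produces a point $\bar x\in X$ with the three properties
$$\phi(\bar x)\le\alpha,\qquad \|\bar x\|\le k\alpha<R,\qquad \phi(x)>\phi(\bar x)-\tfrac{1}{k}\|x-\bar x\|\ \ \text{for all } x\neq\bar x.$$
The second property already places $\bar x$ strictly inside the ball of radius $R$, where the right inverse $\mathrm{T}[\bar x]$ is available with $\|\mathrm{T}[\bar x]\|\le m$. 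It remains to show $\phi(\bar x)=0$.

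Suppose for contradiction $\phi(\bar x)>0$, and let $z=\hbar[\bar x]-y\ne0$. Set $v=-\mathrm{T}[\bar x](z)$, so that $D\hbar[\bar x](v)=-z$ and $\|v\|\le m\|z\|=m\phi(\bar x)$. By G\^ateaux-differentiability in the direction $v$,
$$\hbar[\bar x+tv]=\hbar[\bar x]+tD\hbar[\bar x](v)+o(t)=\hbar[\bar x]-tz+o(t),$$
hence $\phi(\bar x+tv)\le(1-t)\|z\|+o(t)=(1-t)\phi(\bar x)+o(t)$ for small $t>0$. Plugging $x=\bar x+tv$ into the third Ekeland inequality gives
$$(1-t)\phi(\bar x)+o(t)\ \ge\ \phi(\bar x+tv)\ >\ \phi(\bar x)-\tfrac{t}{k}\|v\|\ \ge\ \phi(\bar x)-\tfrac{tm}{k}\phi(\bar x).$$
Dividing by $t$ and letting $t\to 0^+$ yields $-\phi(\bar x)\ge -\tfrac{m}{k}\phi(\bar x)$, i.e., $(1-m/k)\phi(\bar x)\le0$, which contradicts $m/k<1$ together with $\phi(\bar x)>0$. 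Hence $\hbar[\bar x]=y$ with $\|\bar x\|<R$, as claimed.

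The main subtlety is that $\hbar$ is only assumed to be G\^ateaux-differentiable (not Fr\'echet): this is enough for the expansion along the single fixed direction $v=-\mathrm{T}[\bar x](z)$, which is all the argument uses. A secondary delicate point is ensuring the approximate minimizer $\bar x$ stays strictly inside the ball $\{\|x\|<R\}$ where $\mathrm{T}$ is controlled by $m$; this is precisely why the auxiliary constant $k$ is chosen strictly between $m$ and $R/\alpha$, so that the Ekeland estimate $\|\bar x\|\le k\alpha$ automatically yields both $\|\bar x\|<R$ and the decisive bound $1/k<1/m$ driving the contradiction.
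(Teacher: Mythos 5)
The paper states this lemma as a citation (Ekeland, Theorem 2 in \cite{IE}) and gives no proof of its own, so there is nothing internal to the paper to compare against; what you have done is reconstruct the argument from scratch. Your reconstruction is correct and is in fact essentially the argument in Ekeland's paper: apply the parametrized (``strong'') form of Ekeland's variational principle to $\phi(x)=\|\hbar[x]-y\|_Y$ with $\epsilon=\|y\|$ and $\lambda=k\|y\|$ for an intermediate $k\in(m,R/\|y\|)$, place the resulting approximate minimizer strictly inside the ball of radius $R$, and then rule out $\phi(\bar x)>0$ by perturbing along the single direction $v=-\mathrm{T}[\bar x](\hbar[\bar x]-y)$, for which G\^ateaux differentiability suffices. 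The chain of estimates is sound: the triangle inequality gives $\phi(\bar x+tv)\le(1-t)\phi(\bar x)+o(t)$, the Ekeland inequality with slope $\epsilon/\lambda=1/k$ combines with $\|v\|\le m\phi(\bar x)$ to force $(1-m/k)\phi(\bar x)\le 0$, and $m/k<1$ closes the contradiction. Two hygiene points you implicitly rely on are both fine: $v\neq 0$ because $D\hbar[\bar x](v)=-z\neq 0$, so the Ekeland inequality may legitimately be evaluated at $x=\bar x+tv\neq\bar x$; and $X$ is complete and $\phi$ continuous and bounded below, so Ekeland's variational principle applies. The proof is correct as written.
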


As an application of I. Ekeland's theorem, we obtain the following inverse function theorem which will be used to prove the short time existence result for equation (\ref{e2.1}).
\begin{lemma}\label{l1.11}
Let $X$ and $Y$ be Banach spaces. Suppose $$J: X\rightarrow Y$$
 is continuous and G$\hat{a}$teaux-differentiable, with
$J(v_{0})=w_{0}$. Assume that the derivative $DJ[v]$ has a right inverse $L[v]$, uniformly bounded in
a neighbourhood of $v_{0}$:
$$\forall y\in Y,\quad DJ[v]L[v]y=y ;$$
$$\|v-v_{0}\|\leq R\Longrightarrow \|L[v]\|\leq m.$$
For every $w\in Y$  if
$$\parallel w-w_{0}\parallel<\frac{R}{m},$$
then there is some $v\in X$ such that:
$$\|v-v_{0}\|< R$$
and
$$J(v)=w.$$
\end{lemma}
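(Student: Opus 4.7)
The plan is to deduce Lemma 1.11 directly from Lemma 1.10 via a translation in both the source and target spaces, reducing the general fixed-base-point statement to the base-point-at-zero version that Ekeland proved.

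Concretely, given $J$, $v_0$, $w_0$, $L[v]$ as in the statement, I would define a new map $\hbar : X \to Y$ by
\begin{equation*}
\hbar[x] := J(v_0 + x) - w_0,
\end{equation*}
and a corresponding family of operators $\mathrm{T}[x] := L[v_0 + x]$. I then need to check that $\hbar$ and $\mathrm{T}$ satisfy the hypotheses of Lemma 1.10 with the same constants $R$ and $m$. Continuity and G\^ateaux-differentiability of $\hbar$ follow immediately from the same properties of $J$, since translation by the fixed vector $v_0$ in $X$ and by the fixed vector $-w_0$ in $Y$ are smooth. The normalization $\hbar[0] = J(v_0) - w_0 = 0$ holds by the hypothesis $J(v_0) = w_0$. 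A direct computation of the G\^ateaux derivative gives $D\hbar[x] = DJ[v_0 + x]$, so that
\begin{equation*}
D\hbar[x]\,\mathrm{T}[x]\,y = DJ[v_0+x]\,L[v_0+x]\,y = y \quad \text{for all } y\in Y,
\end{equation*}
i.e.\ $\mathrm{T}[x]$ is a right inverse of $D\hbar[x]$. The uniform bound is inherited: if $\|x\| \leq R$, then $\|(v_0+x) - v_0\| = \|x\| \leq R$, hence $\|\mathrm{T}[x]\| = \|L[v_0+x]\| \leq m$.

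Now given $w \in Y$ with $\|w - w_0\| < R/m$, I set $y := w - w_0$, which satisfies $\|y\| < R/m$. Applying Lemma 1.10 to $\hbar$ and $\mathrm{T}$ produces some $x \in X$ with $\|x\| < R$ and $\hbar[x] = y$. Unwinding the definitions, $J(v_0 + x) - w_0 = w - w_0$, so setting $v := v_0 + x$ yields $\|v - v_0\| < R$ and $J(v) = w$, which is exactly the desired conclusion.

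No step here is genuinely difficult; the entire content of Lemma 1.11 is bookkeeping on top of Lemma 1.10. The only small point to be careful about is verifying that G\^ateaux-differentiability really does transport cleanly under translation of the argument, but this is immediate from the definition of the G\^ateaux derivative since for any direction $\xi \in X$,
\begin{equation*}
\lim_{s\to 0}\frac{\hbar[x+s\xi] - \hbar[x]}{s} = \lim_{s\to 0}\frac{J(v_0 + x + s\xi) - J(v_0 + x)}{s} = DJ[v_0 + x]\xi,
\end{equation*}
so $D\hbar[x] = DJ[v_0+x]$ as used above. No additional structural hypothesis on $X$ or $Y$ beyond the Banach space assumption is needed.
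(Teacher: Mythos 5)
Your proof is correct and follows exactly the same translation argument as the paper: define $\hbar[x]=J(v_0+x)-w_0$ and $\mathrm{T}[x]=L[v_0+x]$, verify the hypotheses of Lemma \ref{l1.10} transport under this shift, and unwind. The only difference is that you spell out the Gâteaux-derivative computation under translation, which the paper leaves implicit.
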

\begin{proof}
Denote $v=x+v_{0}$ and $\hbar[x]\triangleq J[x+v_{0}]-w_{0}$, then $\hbar[0]=0$. Since the derivative $DJ[v]$ has a right inverse $L[v]$,
we deduce that $D\hbar[x]=DJ[v]$ has a right inverse $L[v]$. Set $ \mathrm{T}[x]=L[x+v_{0}]$. Following Lemma \ref{l1.10},
for every $y\triangleq w-w_{0}\in Y$,  if
$$\parallel y\parallel<\frac{R}{m},$$
then there is some $x\in X$ such that:
$$\|x\|< R$$
and
$$\hbar[x]=y=w-w_{0}.$$
So  we have:
$$\|v-v_{0}\|< R,$$
and
$$J(v)=w.$$
\end{proof}

\begin{lemma}(See Theorem 8.8 and 8.9 in \cite{GM}.)\label{l1.2}
Assume that $f\in C^{\alpha_{0},\frac{\alpha_{0}}{2}}(\bar{\Omega}_{T})$  for some $0<\alpha_{0}<1$, $T>0$, and
$G(x,p)$, $G_{p}(x,p)$ are in $C^{1+\alpha_{0}}(\Xi)$ for any compact subset $\Xi$ of $\partial\Omega\times\mathbb{R}^{n}$
such that $\inf_{\partial\Omega}\langle G_{p}, \nu\rangle>0$
where $\nu$ is the inner normal vector of $\partial\Omega$. Let $u_{0}\in C^{2+\alpha_{0}}(\bar{\Omega})$ be strictly convex and satisfy
$G(x, Du_{0})=0.$ Then there exists  $T'>0$ $(T'\leq T)$ such that we can find a unique solution which is strictly convex in $x$ variable in the class $C^{2+\alpha_{0},\frac{2+\alpha_{0}}{2}}(\bar{\Omega}_{T'})$
to the following equations
\begin{equation*}\label{e1.4}
\left\{ \begin{aligned}\frac{\partial u}{\partial t}-a^{ij}(x,t) u_{ij}&=f(x,t),
& T'>t>0,\quad x\in \Omega, \\
G(x,Du)&=0,& \qquad T'>t>0,\quad x\in\partial\Omega,\\
 u&=u_{0}, & \qquad\quad t=0,\quad x\in \Omega,
\end{aligned} \right.
\end{equation*}
where  $a^{ij}(x,t)(1\leq i,j\leq n)\in C^{\alpha_{0},\frac{\alpha_{0}}{2}}(\bar{\Omega}_{T})$ and $[a^{ij}(x,t)]\geq a_{0}\text{I}$ for some positive constant $a_{0}$.
\end{lemma}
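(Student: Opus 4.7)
The plan is to cast the problem as a zero-finding problem for a nonlinear map between Banach spaces and invoke the inverse function theorem in Lemma \ref{l1.11}, the inverse being furnished by the classical Schauder theory for linear parabolic equations with oblique derivative boundary conditions. To set this up, I would fix $T>0$ and pick a background extension $\bar u_{0}(x,t)$ of the initial datum $u_{0}$ into the cylinder $\bar\Omega_{T}$, with $\bar u_{0}(\cdot,0)=u_{0}$, in the class $C^{2+\alpha_{0},(2+\alpha_{0})/2}(\bar\Omega_{T})$. Writing $u=\bar u_{0}+v$, the problem becomes: find $v$ in the closed subspace
$$X=\{v\in C^{2+\alpha_{0},(2+\alpha_{0})/2}(\bar\Omega_{T'}):\ v(\cdot,0)=0\}$$
with $J(v)=0$, where
$$J(v):=\bigl(\partial_{t}(\bar u_{0}+v)-a^{ij}(\bar u_{0}+v)_{ij}-f,\ G(x,D\bar u_{0}+Dv)|_{\partial\Omega\times(0,T')}\bigr)$$
takes values in $Y=C^{\alpha_{0},\alpha_{0}/2}(\bar\Omega_{T'})\times C^{1+\alpha_{0},(1+\alpha_{0})/2}(\partial\Omega\times[0,T'])$, restricted to the subspace compatible with $v(\cdot,0)=0$ at the corner $\partial\Omega\times\{0\}$. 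Because $G(x,Du_{0})=0$ on $\partial\Omega$ and $\bar u_{0}(\cdot,0)=u_{0}$, the quantity $J(0)$ vanishes at $t=0$ and its $Y$-norm is $o(1)$ as $T'\downarrow 0$.

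Next I would linearise at $v=0$. The G\^ateaux derivative is
$$DJ[0](w)=\bigl(\partial_{t}w-a^{ij}(x,t)w_{ij},\ G_{p}(x,D\bar u_{0})\cdot Dw|_{\partial\Omega\times(0,T')}\bigr).$$
Since $\inf_{\partial\Omega}\langle G_{p}(\cdot,Du_{0}),\nu\rangle>0$, the boundary operator is strictly oblique; together with $[a^{ij}]\geq a_{0}I$ and the H\"older regularity of all coefficients, the classical Schauder theory (Theorems 8.8 and 8.9 of \cite{GM}) supplies, for every $(f_{1},\phi_{1})\in Y$ meeting the zeroth-order corner compatibility, a unique $w\in X$ solving $DJ[0](w)=(f_{1},\phi_{1})$ with $\|w\|_{X}\leq m\|(f_{1},\phi_{1})\|_{Y}$. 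Hence $DJ[0]$ possesses a bounded right inverse $L[0]$. Continuous dependence of the Schauder estimates on the coefficients of the boundary operator implies $DJ[v]$ admits a right inverse $L[v]$ with $\|L[v]\|\leq 2m$ on a small $X$-ball of radius $R$ around $0$. Choosing $T'$ so small that $\|J(0)\|_{Y}<R/(2m)$ and then applying Lemma \ref{l1.11} produces the desired $v\in X$ with $J(v)=0$, that is, a solution $u=\bar u_{0}+v\in C^{2+\alpha_{0},(2+\alpha_{0})/2}(\bar\Omega_{T'})$.

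Strict convexity of $u(\cdot,t)$ in the $x$ variable for small $t$ follows by continuity: $D^{2}u(\cdot,t)\to D^{2}u_{0}(\cdot)$ uniformly as $t\downarrow 0$, and $D^{2}u_{0}$ is uniformly positive definite on the compact set $\bar\Omega$, so after a further shrinking of $T'$ the Hessian remains positive definite throughout $\bar\Omega_{T'}$. Uniqueness is a standard consequence of the maximum principle: the difference of two solutions satisfies a homogeneous linear parabolic equation with an oblique boundary condition obtained by integrating the mean-value form of $G$, and vanishes at $t=0$, hence vanishes identically. The main obstacle I anticipate is the careful handling of the corner compatibility conditions at $\partial\Omega\times\{0\}$ so that the linear Schauder theory can be invoked; for $\alpha_{0}\in(0,1)$ only the zeroth-order compatibility $G(x,Du_{0})=0$ is needed, and this is given, so the technical content is mainly in choosing the extension $\bar u_{0}$ well.
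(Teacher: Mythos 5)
The paper gives no proof of Lemma~\ref{l1.2} at all: it simply attaches the citation ``See Theorem 8.8 and 8.9 in \cite{GM}.'' Those two theorems of Lieberman provide Schauder existence and estimates for \emph{linear} parabolic equations with \emph{linear} oblique derivative boundary conditions; the lemma as stated here, with the nonlinear boundary condition $G(x,Du)=0$, is therefore not literally a restatement of Lieberman's results but requires a further reduction. Your proposal supplies precisely that missing reduction: you cast the problem as $J(v)=0$ with $u=\bar u_0+v$, observe that the linearisation $DJ[0]$ is a linear parabolic operator paired with the linear oblique boundary operator $G_{p}(x,D\bar u_0)\cdot Dw$ (strictly oblique for small time, by continuity from $\langle G_{p}(\cdot,Du_0),\nu\rangle>0$), invoke Lieberman's linear theory to produce a bounded right inverse $L[v]$ on a small ball, note that $\|J(0)\|_Y\to 0$ as $T'\downarrow 0$ because the initial compatibility $G(x,Du_0)=0$ holds, and close with Lemma~\ref{l1.11}. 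The strict convexity and uniqueness remarks at the end are also sound. In other words, your argument is correct in outline and fills a gap that the paper leaves implicit; the only difference from the paper is that the paper asserts the lemma by citation while you derive it. One point you might make explicit when tidying this up: for $\alpha_0\in(0,1)$ only zeroth-order corner compatibility is required by Lieberman's theorems, and you have correctly identified that $G(x,Du_0)=0$ furnishes exactly this — but it is worth stating the precise compatibility condition needed for the pair $(f_1,\phi_1)$ in the right-inverse step so that the application of the inverse function theorem (which requires a right inverse defined on all of $Y$) is restricted to the appropriate closed subspace of compatible data, as you hint at but do not fully spell out.
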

By the property of $C^{2+\alpha_{0},\frac{2+\alpha_{0}}{2}}(\bar{\Omega}_{T'})$ and $u(x,t)|_{t=0}=u_{0}(x)$, we obtain
\begin{equation}\label{e2.00}
\lim_{t\rightarrow 0}\parallel u(\cdot,t)-u_{0}(\cdot)\parallel_{C^{2+\alpha_{0}}(\bar{\Omega})}=0.
\end{equation}
For any $\alpha<\alpha_{0}$, we have
\begin{equation*}
\begin{aligned}
&\frac{|(D^{2}u(x,t)-D^{2}u_{0}(x))-(D^{2}u(y,\tau)-D^{2}u_{0}(y))|}{\max\{|x-y|^{\alpha},|t-\tau|^{\frac{\alpha}{2}}\}} \\
&\leq \frac{|(D^{2}u(x,t)-D^{2}u_{0}(x))-(D^{2}u(y,t)-D^{2}u_{0}(y))|}{|x-y|^{\alpha}}\\
&+|t-\tau|^{\frac{\alpha_{0}-\alpha}{2}}\frac{|(D^{2}u(y,t)-D^{2}u_{0}(y))-(D^{2}u(y,\tau)-D^{2}u_{0}(y))|}{|t-\tau|^{\frac{\alpha_{0}}{2}}}.
\end{aligned}
\end{equation*}
Then we get
\begin{equation}\label{e2.01}
\begin{aligned}
\parallel D^{2}u-D^{2}u_{0}\parallel_{C^{\alpha,\frac{\alpha}{2}}(\bar{\Omega}_{T'})}\leq &\max_{0\leq t\leq T'}\parallel D^{2}u(\cdot,t)-D^{2}u_{0}(\cdot)\parallel_{C^{\alpha}(\bar{\Omega})}\\
&+T'^{\frac{\alpha-\alpha_{0}}{2}}\parallel D^{2}u-D^{2}u_{0}\parallel_{C^{\alpha_{0},\frac{\alpha_{0}}{2}}(\bar{\Omega}_{T'})}.
\end{aligned}
\end{equation}
Combining  (\ref{e2.00}) with (\ref{e2.01}), we obtain
\begin{equation}\label{e2.02}
\lim_{T'\rightarrow 0}\parallel D^{2}u-D^{2}u_{0}\parallel_{C^{\alpha,\frac{\alpha}{2}}(\bar{\Omega}_{T'})}=0
\end{equation}
which will be used later.

According to the proof in \cite{JU}, we can verify the oblique boundary condition.
\begin{lemma}(See J. Urbas \cite{JU}.)\label{l1.3}\quad
\\
$u\in C^{2}(\bar{\Omega})$  with $D^{2}u>0$
$\Longrightarrow$ $\inf_{\partial\Omega}h_{p_{k}}(Du)\nu_{k}>0$ where $\nu=(\nu_{1},\nu_{2}, \cdots,\nu_{n})$ is the unit inward normal vector of $\partial\Omega$,
i.e., $h(Du)=0$ is strictly oblique.
\end{lemma}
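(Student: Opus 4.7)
The plan is to exploit the boundary condition $h(Du) = 0$ on $\partial\Omega$ together with the strict convexity of $u$, and conclude by a compactness argument on $\partial\Omega$.

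First, I would fix an arbitrary boundary point $x_{0}\in\partial\Omega$ and differentiate the identity $h(Du(x))=0$ along any tangent direction $\tau$ to $\partial\Omega$ at $x_{0}$. This yields $h_{p_{k}}(Du(x_{0}))u_{ki}(x_{0})\tau_{i}=0$ for every $\tau\perp\nu$, which means that the vector with components $h_{p_{k}}(Du(x_{0}))u_{ki}(x_{0})$ is proportional to the inward normal $\nu$. Thus there exists a scalar $\mu(x_{0})$ with
\begin{equation*}
h_{p_{k}}(Du(x_{0}))\,u_{ki}(x_{0})=\mu(x_{0})\,\nu_{i}.
\end{equation*}
Contracting with $\nu_{i}$ gives $\mu(x_{0})=h_{p_{k}}(Du(x_{0}))u_{ki}(x_{0})\nu_{i}$, while inverting the Hessian via $[u^{ij}]=[u_{ij}]^{-1}$ gives $h_{p_{j}}(Du(x_{0}))=\mu(x_{0})\,u^{ji}(x_{0})\nu_{i}$, so that
\begin{equation*}
h_{p_{k}}(Du(x_{0}))\,\nu_{k}=\mu(x_{0})\,u^{ij}(x_{0})\nu_{i}\nu_{j}.
\end{equation*}

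Next I would show $\mu(x_{0})>0$. Since $Du$ maps $\bar\Omega$ into $\bar{\tilde\Omega}$ and sends $\partial\Omega$ onto $\partial\tilde\Omega$, for small $t>0$ the point $x_{0}+t\nu$ lies in $\Omega$, hence $Du(x_{0}+t\nu)\in\tilde\Omega$, i.e., $h(Du(x_{0}+t\nu))\geq 0=h(Du(x_{0}))$. Differentiating at $t=0$ yields $h_{p_{k}}(Du(x_{0}))u_{ki}(x_{0})\nu_{i}\geq 0$, that is $\mu(x_{0})\geq 0$. Equality would force $h_{p_{k}}(Du(x_{0}))u_{ki}(x_{0})=0$ for all $i$, and since $D^{2}u(x_{0})$ is invertible this would give $Dh(Du(x_{0}))=0$, contradicting $|Dh|_{\partial\tilde\Omega}=1$. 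Therefore $\mu(x_{0})>0$, and because $[u^{ij}(x_{0})]$ is positive definite we conclude $h_{p_{k}}(Du(x_{0}))\nu_{k}>0$ pointwise.

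Finally, to upgrade pointwise positivity to $\inf_{\partial\Omega}h_{p_{k}}(Du)\nu_{k}>0$, I would invoke continuity and compactness: since $u\in C^{2}(\bar\Omega)$ with $D^{2}u>0$ on the compact set $\bar\Omega$, there is a uniform constant $c_{0}>0$ such that $D^{2}u\geq c_{0}I$; the map $h_{p_{k}}(Du)\nu_{k}$ is continuous on the compact set $\partial\Omega$ and strictly positive at every point, so its infimum is achieved and strictly positive. The only slightly delicate point is justifying the sign of $\mu$; beyond that the argument is essentially linear algebra plus continuity, and no deeper machinery (such as the concavity hypothesis on $h$) is required.
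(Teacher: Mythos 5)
Your proof is correct and follows essentially the same route as the argument of Urbas cited in the paper: differentiating $h(Du)=0$ tangentially to obtain $h_{p_k}u_{ki}=\mu\nu_i$, inverting the Hessian to get $h_{p_k}\nu_k=\mu\,u^{ij}\nu_i\nu_j$, and using $h(Du)\geq 0$ in $\Omega$ plus invertibility of $D^2u$ and $|Dh|_{\partial\tilde\Omega}=1$ to force $\mu>0$. In particular, your decomposition immediately yields the identity $\langle\beta,\nu\rangle=\sqrt{u^{ij}\nu_i\nu_j\,h_{p_k}h_{p_l}u_{kl}}$ that the paper records as equation (3.4), so the two treatments are the same computation seen from slightly different angles.
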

We are now in a position to prove the short time existence of solutions of (\ref{e2.1})
which is equivalent to the problem (\ref{e1.1})-(\ref{e1.3}).
\begin{Proposition}\label{p1.1}
According to the conditions in Theorem \ref{t1.1}, there exist some $T''>0$ and $u\in C^{2+\alpha,\frac{2+\alpha}{2}}(\bar{\Omega}_{T''})$ which depend only on $\Omega$, $\tilde{\Omega}$, $u_{0}$,
 such that $u$ is  a solution of (\ref{e2.1}) and  is strictly convex in $x$ variable.
\end{Proposition}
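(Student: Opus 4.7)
The plan is to apply the inverse function theorem in Fréchet (actually Banach) spaces, namely Lemma \ref{l1.11}, to a nonlinear map $J$ built from the equation and the boundary condition, linearized around a carefully chosen base function $\bar{u}$ that matches the initial data.

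First I would construct the base function $\bar{u}(x,t)$ on $\bar{\Omega}_{T''}$ by extending $u_{0}$ so that it is consistent with the flow to first order at $t=0$; concretely, set $\bar{u}(x,t) = u_{0}(x) + t\,F(D^{2}u_{0}(x))$, or solve a short ODE in $t$ to make $\bar{u}$ smoothly compatible with the initial data. Because $u_{0}$ is uniformly convex and $Du_{0}(\Omega)=\tilde{\Omega}$, we have $h(Du_{0})=0$ on $\partial\Omega$ and $\bar{u}\in C^{2+\alpha_{0},1+\alpha_{0}/2}(\bar{\Omega}_{T''})$ is strictly convex in $x$ for $T''$ small. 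Choose the Banach spaces
$$X=\{u\in C^{2+\alpha,(2+\alpha)/2}(\bar{\Omega}_{T''}) : u(\cdot,0)=u_{0}\},\qquad
Y=C^{\alpha,\alpha/2}(\bar{\Omega}_{T''})\times C^{1+\alpha,(1+\alpha)/2}(\partial\Omega\times[0,T'']),$$
(with the usual parabolic compatibility at $t=0$) and define
$$J(u)=\bigl(\,u_{t}-F(D^{2}u),\; h(Du)|_{\partial\Omega}\,\bigr).$$
Our aim is to solve $J(u)=0$; applying Lemma \ref{l1.11} with $v_{0}=\bar{u}$ and $w_{0}=J(\bar{u})$, this reduces to two tasks.

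Next I would compute the Gâteaux derivative
$$DJ[v]\cdot\varphi = \bigl(\,\varphi_{t}-F^{ij}(D^{2}v)\varphi_{ij},\; h_{p_{k}}(Dv)\varphi_{k}|_{\partial\Omega}\,\bigr),$$
with $\varphi(\cdot,0)=0$. For $v$ in a $C^{2+\alpha,(2+\alpha)/2}$-neighbourhood of $\bar{u}$ of radius $R$, the coefficients $a^{ij}(x,t):=F^{ij}(D^{2}v)$ lie in $C^{\alpha,\alpha/2}(\bar{\Omega}_{T''})$ with bounded norm, and since $v$ remains uniformly convex, $[a^{ij}]\ge a_{0}\mathrm{I}$ by the monotonicity of $F$ on $\Gamma_{+}$ and assumption (\ref{e1.16}). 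By Lemma \ref{l1.3}, strict convexity of $v$ forces $\inf_{\partial\Omega}h_{p_{k}}(Dv)\nu_{k}>0$, so the linear problem is uniformly oblique. Lemma \ref{l1.2} then produces a unique right inverse $L[v]$ of $DJ[v]$ with an estimate $\|L[v]\|\le m$ depending only on the neighbourhood bounds (including the uniform lower bounds on ellipticity and obliquity); this gives the hypothesis of Lemma \ref{l1.11}.

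The remaining task is to make the defect $\|J(\bar{u})\|_{Y}<R/m$. By construction $\bar{u}_{t}(x,0)-F(D^{2}u_{0}(x))=0$ and $h(D\bar{u}(x,0))=h(Du_{0}(x))=0$ on $\partial\Omega$, so both components of $J(\bar{u})$ vanish at $t=0$. Using an interpolation argument of the type carried out between (\ref{e2.00}) and (\ref{e2.02}) in the excerpt, the $C^{\alpha,\alpha/2}$ and $C^{1+\alpha,(1+\alpha)/2}$ norms of these components tend to $0$ as $T''\to 0$, since the defects are small in the relevant $C^{0}$ norm (by smoothness of $\bar{u}$ and continuity of $F,h$) and the higher-order part is controlled by a positive power $T''^{(\alpha_{0}-\alpha)/2}$. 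Picking $T''$ small enough so that $\|J(\bar{u})\|_{Y}<R/m$, Lemma \ref{l1.11} yields $u\in X$ with $J(u)=0$, i.e., a solution of (\ref{e2.1}); strict convexity in $x$ persists in the $X$-ball about $\bar{u}$ for $R$ small.

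The main obstacle I anticipate is the uniform bound on the right inverse $L[v]$: one must verify that, throughout a $C^{2+\alpha,(2+\alpha)/2}$-neighbourhood of $\bar{u}$, the linearised coefficients retain uniform parabolicity, uniform obliqueness, and uniform Hölder bounds, so that the Schauder estimate built into Lemma \ref{l1.2} yields $m$ independent of $v$. Once this uniform estimate is in hand, the smallness of $J(\bar{u})$ for small $T''$ is a routine consequence of the compatibility $J(\bar{u})|_{t=0}=0$ and the Hölder interpolation in time.
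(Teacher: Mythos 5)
The overall strategy is the right one and matches the paper (choose a base function, set up a map $J$ on parabolic Hölder spaces, invert the linearization via Lemma \ref{l1.2} and Schauder estimates, shrink $T''$ to make the defect small, then invoke Lemma \ref{l1.11}), but your choice of base function has a genuine regularity gap. You propose $\bar{u}(x,t)=u_{0}(x)+t\,F(D^{2}u_{0}(x))$ and assert $\bar{u}\in C^{2+\alpha_{0},1+\alpha_{0}/2}(\bar{\Omega}_{T''})$. That is false at the stated regularity: the theorem only assumes $u_{0}\in C^{2+\alpha_{0}}(\bar{\Omega})$, so $F(D^{2}u_{0})$ is merely $C^{\alpha_{0}}$ in $x$, and hence $\bar{u}_{ij}=u_{0,ij}+t\,\partial_{ij}\bigl(F(D^{2}u_{0})\bigr)$ does not exist as a continuous function for any $t>0$. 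Your $\bar{u}$ therefore lives outside $X$, and the whole inverse-function-theorem framework collapses because $J(\bar{u})$ is undefined in $Y$. The same obstruction hits the boundary component: $h(D\bar{u})$ is no better than $C^{1+\alpha_{0}}$ in $x$ for $t=0$ only, not $C^{1+\alpha,(1+\alpha)/2}$ on $\partial\Omega\times[0,T'']$, so the ``defect goes to zero by interpolation'' step cannot even begin. Your fallback phrase ``or solve a short ODE in $t$'' does not help, since an ODE in $t$ costs nothing in spatial regularity.

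The paper's fix is exactly to avoid this: instead of extending $u_{0}$ forward in time by the (low-regularity) nonlinear value $F(D^{2}u_{0})$, one solves the \emph{linear} oblique parabolic problem
$\partial_{t}\hat{u}-\triangle\hat{u}=F(D^{2}u_{0})-\triangle u_{0}$, $h(D\hat{u})=0$, $\hat{u}|_{t=0}=u_{0}$,
using Lemma \ref{l1.2}. Because the right-hand side is only $C^{\alpha_{0},\alpha_{0}/2}$, parabolic Schauder theory still \emph{produces} a solution $\hat{u}\in C^{2+\alpha_{0},1+\alpha_{0}/2}(\bar{\Omega}_{T_{1}})$, i.e. the smoothing of the heat operator supplies the two spatial derivatives that the naive extension lacks. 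As a bonus, $\hat{u}$ satisfies $h(D\hat{u})=0$ exactly for all $t$, so the only nonzero component of $J(\hat{u})$ is the interior defect $\hat{f}=\partial_{t}\hat{u}-F(D^{2}\hat{u})$, and only one interpolation estimate (the one you outline, via (\ref{e2.00})--(\ref{e2.02})) is needed. Once you replace $\bar{u}$ by such a $\hat{u}$, the rest of your argument---uniform ellipticity and obliquity in a $C^{2+\alpha,(2+\alpha)/2}$-ball, uniform bound on the right inverse $L[v]$, and the size estimate for $T''$---goes through and coincides with the paper's.
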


\begin{proof}
Denote the Banach spaces
$$X=C^{2+\alpha,1+\frac{\alpha}{2}}(\bar{\Omega}_{T}),
\quad Y=C^{\alpha,\frac{\alpha}{2}}(\bar{\Omega}_{T})
\times C^{1+\alpha,\frac{1+\alpha}{2}}(\partial\Omega\times(0,T))\times C^{2+\alpha}(\bar{\Omega}),$$
where $$\parallel\cdot\|_{Y}=\parallel\cdot\|_{C^{\alpha,\frac{\alpha}{2}}(\bar{\Omega}_{T})}
+\parallel\cdot\|_{C^{1+\alpha,\frac{1+\alpha}{2}}(\partial\Omega\times(0,T))}+
\parallel\cdot\|_{C^{2+\alpha}(\bar{\Omega})}.$$
Define a map $$J:\quad X\rightarrow Y$$ by
$$J(u)=\left\{ \begin{aligned}
&\frac{\partial u}{\partial t}-F(D^{2}u), &\quad (x,t)\in\Omega_{T}, \\
&h(Du), &\quad (x,t)\in \partial\Omega\times(0,T),\\
&u, &\quad (x,t)\in \Omega\times\{t=0\}.
\end{aligned} \right.$$
The strategy is now to use the inverse function theorem to obtain the short time existence result.
The computation of the G$\hat{a}$teaux derivative shows that:
$$\forall u,v\in X,\quad DJ[u](v)\triangleq\frac{d}{d\tau}J(u+\tau v)|_{\tau=0}=\left\{ \begin{aligned}
&\frac{\partial v}{\partial t}-F^{ij}(D^{2}u)v_{ij}, &\quad (x,t)\in\Omega_{T}, \\
&h_{p_{i}}(Du)v_{i}, &\quad (x,t)\in \partial\Omega\times(0,T),\\
&v, &\quad (x,t)\in \Omega\times\{t=0\}.
\end{aligned} \right.$$
Using Lemma \ref{l1.2} and Lemma \ref{l1.3}, there exists $T_{1}>0$ such that  we can find $$\hat{u}\in C^{2+\alpha_{0},1+\frac{\alpha_{0}}{2}}(\bar{\Omega}_{T_{1}})\subset X$$ to be strictly convex in $x$ variable,  which satisfies the following equations :
\begin{equation}\label{e2.2}
\left\{ \begin{aligned}\frac{\partial \hat{u}}{\partial t}-\triangle \hat{u}&=F(D^{2}u_{0})-\triangle u_{0},
& T_{1}>t>0,\quad x\in \Omega, \\
h(D\hat{u})&=0,& \qquad T_{1}>t>0,\quad x\in\partial\Omega.\\
 \hat{u}&=u_{0}, & \qquad\quad t=0,\quad x\in \Omega.
\end{aligned} \right.
\end{equation}
We see that $\exists R>0$, such that $u$ is strictly convex in $x$ variable if $$\|u-\hat{u}\|_{C^{2+\alpha,\frac{2+\alpha}{2}}(\bar{\Omega}_{T_{1}})}<R,$$
 For each $Z\triangleq(f,g,w)\in Y $ and  using Lemma \ref{l1.2} again, we know that  there exists a unique $v\in X (T=T_{1})$ satisfying $DJ[u](v)=(f,g,w)$, i.e.
\begin{equation*}
\left\{ \begin{aligned}\frac{\partial v}{\partial t}-F^{ij}(D^{2}u)v_{ij}&=f,
& T_{1}>t>0,\quad x\in \Omega, \\
h_{p_{i}}(Du)v_{i}&=g,& \qquad T_{1}>t>0,\quad x\in\partial\Omega,\\
 v&=w, & \qquad\quad t=0,\quad x\in \Omega.
\end{aligned} \right.
\end{equation*}
Using Schauder estimates for linear parabolic equation to oblique boundary condition
(cf. Theorem 8.8 and 8.9 in \cite{GM}), we obtain
\begin{equation*}
\parallel v \parallel_{C^{2+\alpha,\frac{2+\alpha}{2}}(\bar{\Omega}_{T_1})}
\leq
m (\parallel f \|_{C^{\alpha,\frac{\alpha}{2}}(\bar{\Omega}_{T_1})}
+\parallel g \|_{C^{1+\alpha,\frac{1+\alpha}{2}}(\partial\Omega\times(0,T_1))}+
\parallel w \|_{C^{2+\alpha}(\bar{\Omega})}),
\end{equation*}
for some positive constant $m$.
Using the definition of the Banach spaces $X$ and $Y$ with $T=T_{1}$, we can rewrite the above Schauder estimates as
\begin{equation*}
\parallel v \parallel_{X} \leq m \parallel Z \parallel_{Y}.
\end{equation*}
If  $\|Z\|_{Y}\leq 1$, then we have $$\|v\|_{X}\leq m. $$
It means that the derivative $DJ[u](v)=Z$ has a right inverse $v=L[u](Z)$ and
\begin{equation*}\label{e2.3}
\|L[u]\|\triangleq \sup_{\|Z\|_{Y}\leq 1}\|L[u](Z)\|_{X}\leq m.
\end{equation*}
If  we set $$\hat{f}=\frac{\partial \hat{u}}{\partial t}-F(D^{2}\hat{u}),\quad w_{0}=(\hat{f}, 0,u_{0}),\quad
w=(0, 0,u_{0}),$$
then we can show that
\begin{equation*}
\begin{aligned}
\parallel\hat{f}-0\parallel_{C^{\alpha,\frac{\alpha}{2}}(\bar{\Omega}_{T_{1}})}
&=\parallel\triangle \hat{u}-\triangle u_{0}+F(D^{2}u_{0})-F(D^{2}\hat{u})\parallel_{C^{\alpha,\frac{\alpha}{2}}(\bar{\Omega}_{T_{1}})}\\
&\leq\parallel\triangle \hat{u}-\triangle u_{0}\parallel_{C^{\alpha,\frac{\alpha}{2}}(\bar{\Omega}_{T_{1}})}+\parallel F(D^{2}u_{0})-F(D^{2}\hat{u})\parallel_{C^{\alpha,\frac{\alpha}{2}}(\bar{\Omega}_{T_{1}})}\\
&\leq C\parallel D^{2}\hat{u}-D^{2}u_{0}\parallel_{C^{\alpha,\frac{\alpha}{2}}(\bar{\Omega}_{T_{1}})},
\end{aligned}
\end{equation*}
where $C$ is a constant depending only on the known data.  Using (\ref{e2.02}), we conclude:
 $\exists T''>0$  $(T''\leq T_{1})$ to be small enough such that
$$\parallel\hat{f}-0\parallel_{C^{\alpha,\frac{\alpha}{2}}(\bar{\Omega}_{T''})}\leq  C\parallel D^{2}\hat{u}-D^{2}u_{0}\parallel_{C^{\alpha,\frac{\alpha}{2}}(\bar{\Omega}_{T''})}<\frac{R}{m}.$$
Thus we obtain
$$\parallel w-w_{0}\|_{Y}=
\parallel0-\hat{f}\|_{C^{\alpha,\frac{\alpha}{2}}(\bar{\Omega}_{T''})}<\frac{R}{m}.
$$
By Lemma \ref{l1.11},  we give the desired results.
\end{proof}
\begin{rem}
By the strong maximum principle, the strictly convex solution to (\ref{e2.1}) is unique.
\end{rem}

\section{ Preliminary results }
In this section, the $C^{2}$ a priori bound is accomplished by making the second derivative estimates on the boundary
for the solutions of fully nonlinear parabolic equations. This treatment
is similar to the problems presented in \cite{HR}, \cite{JU} and  \cite{OK}, but requires some modification to
accommodate the more general   situation. Specifically, the structure conditions  (\ref{e1.16}) and (\ref{e1.17}) are needed in order to
derive differential inequalities from  barriers  which can be used.

For convenience of the computation, we set
$$
\beta^{k}\triangleq\frac{\partial h(Du)}{\partial u_{k}}=h_{p_{k}}(Du)$$
and  $\langle\cdot,\cdot\rangle$ be the inner product in $\mathbb{R}^{n}$.
By Proposition \ref{p1.1} and the regularity theory of parabolic equations,   we may assume that $u$ is a strictly convex solution of (\ref{e1.1})-(\ref{e1.3}) in the class $C^{2+\alpha,1+\frac{\alpha}{2}}(\bar{\Omega}_{T})\cap C^{4+\alpha,2+\frac{\alpha}{2}}(\Omega_{T})$ for some $T>0$.
\begin{lemma}[$\dot{u}$-estimates]\label{l5.1a}\quad
\\
As long as the convex solution to (\ref{e1.1})-(\ref{e1.3}) exists, the following estimates hold, i.e.,
\begin{equation*}\label{e3.1}
\Theta_{0}\triangleq\min_{\bar{\Omega}}F(D^{2}u_{0})\leq\dot{u}\triangleq\frac{\partial u}{\partial t}\leq\Theta_{1}\triangleq\max_{\bar{\Omega}}F(D^{2}u_{0}).
\end{equation*}
\end{lemma}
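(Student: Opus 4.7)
The plan is to apply the parabolic maximum principle to $\dot{u}$, viewed as a solution of a linear parabolic problem with an oblique boundary condition. Differentiating equation (\ref{e1.1}) with respect to $t$, and writing $v=\dot{u}$, gives
$$v_{t}=F^{ij}(D^{2}u)\,v_{ij}\qquad \text{in}\ \Omega_{T}.$$
Because $u$ is a strictly convex solution, $D^{2}u\in\Gamma_{+}$, and hence $[F^{ij}(D^{2}u)]$ is positive definite, so this is a linear uniformly parabolic equation on compact subsets in time. Differentiating the reformulated boundary condition $h(Du)=0$ with respect to $t$ gives
$$\beta^{k}v_{k}=h_{p_{k}}(Du)\,v_{k}=0\qquad \text{on}\ \partial\Omega\times(0,T),$$
and Lemma \ref{l1.3} ensures $\inf_{\partial\Omega}\beta^{k}\nu_{k}>0$, where $\nu$ is the inner unit normal, so the boundary condition is strictly oblique. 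Finally, the initial value is $v(\cdot,0)=F(D^{2}u_{0})$, which by definition satisfies $\Theta_{0}\le v(\cdot,0)\le \Theta_{1}$.

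To prove the upper bound, I would consider $w=v-\Theta_{1}-\epsilon t$ for $\epsilon>0$; the computation gives $w_{t}-F^{ij}w_{ij}=-\epsilon<0$, $\beta^{k}w_{k}=0$ on $\partial\Omega\times(0,T)$, and $w(\cdot,0)\le 0$. If $w$ attained a positive maximum at an interior point $(x_{0},t_{0})$ with $t_{0}>0$, then $w_{t}(x_{0},t_{0})\ge 0$ and $D^{2}w(x_{0},t_{0})\le 0$, forcing $w_{t}-F^{ij}w_{ij}\ge 0$, contradicting the strict inequality. If the maximum were at a boundary point $x_{0}\in\partial\Omega$ with $t_{0}>0$, then the tangential gradient of $w$ at $x_{0}$ vanishes along $\partial\Omega$, so $\beta^{k}w_{k}(x_{0},t_{0})=(\beta\!\cdot\!\nu)\,w_{\nu}(x_{0},t_{0})$; obliqueness then forces $w_{\nu}(x_{0},t_{0})=0$, contradicting the Hopf boundary point lemma which gives $w_{\nu}(x_{0},t_{0})<0$ in the inner normal direction. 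Thus $w\le 0$ throughout $\bar{\Omega}_{T}$, and letting $\epsilon\to 0$ yields $\dot{u}\le\Theta_{1}$.

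For the lower bound, the same argument is applied to $-v$ with initial bound $-\Theta_{0}$, or equivalently to $\tilde{w}=-v+\Theta_{0}-\epsilon t$; exactly the same maximum principle reasoning gives $\dot{u}\ge\Theta_{0}$.

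The only subtle point is the step at the boundary: one must verify that the oblique condition $\beta^{k}w_{k}=0$, combined with the vanishing of the tangential gradient of $w$ at an interior-time boundary maximum, implies $w_{\nu}=0$ and therefore contradicts Hopf's lemma. This is the only place where the strict obliqueness from Lemma \ref{l1.3} (i.e.\ the convexity of $u$) is essential; everything else is routine differentiation of the equation and boundary condition in the time variable.
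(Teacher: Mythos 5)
Your proof is essentially the same as the paper's: both differentiate the equation and the boundary condition in $t$, apply the parabolic maximum principle to place extrema of $\dot{u}$ on the parabolic boundary, and use strict obliqueness together with the Hopf boundary point lemma to exclude extrema on the lateral boundary $\partial\Omega\times(0,T]$, leaving only $t=0$. The $\epsilon t$-perturbation and the explicit tangential/normal decomposition of $\beta$ are standard technical devices the paper simply leaves implicit (it states directly that $\dot{u}_\beta\neq 0$ via the Hopf lemma); otherwise the reasoning is identical.
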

\begin{proof}
We use the methods known from Lemma 2.1 in \cite{OK}. \par
From (\ref{e1.1}), a direct computation shows that
$$\frac{\partial(\dot{u}) }{\partial t}-F^{ij}\partial_{ij}(\dot{u})=0.$$
Using the  maximum principle, we see that
$$\min_{\bar{\Omega}_{T}}(\dot{u}) =\min_{\partial\bar{\Omega}_{T}}(\dot{u}).$$
Without loss of generality, we assume that $\dot{u}\neq constant$. If $\exists x_{0}\in \partial\Omega, t_{0}>0,$
such that $\dot{u}(x_{0},t_{0})=\min_{\bar{\Omega}_{T}}(\dot{u}).$ On one hand, since $\langle\beta, \nu\rangle>0$, by the Hopf Lemma (cf.\cite{LL}) for parabolic equations,
there must hold in the following $$\dot{u}_{\beta}(x_{0},t_{0})\neq 0.$$ On the other hand,  we differentiate the boundary condition and then obtain
$$\dot{u}_{\beta}=h_{p_{k}}(Du)\frac{\partial \dot{u}}{\partial x_{k}}=\frac{\partial h(Du)}{\partial t}=0.$$
It is  a contradiction.
So we deduce that
$$\dot{u}\geq \min_{\bar{\Omega}_{T}}(\dot{u})
=\min_{\partial\bar{\Omega}_{T}\mid_{t=0}}(\dot{u})=\min_{\bar{\Omega}}F(D^{2}u_{0}).$$
For the same reason, we have $$\frac{\partial u}{\partial t}\leq\Theta_{1}\triangleq\max_{\bar{\Omega}}F(D^{2}u_{0}).$$
Putting these facts together, the assertion follows.
\end{proof}
\begin{lemma}\label{l5.1b}
 Let $(x,t)$  be an arbitrary point of $\Omega_{T}$, and $\lambda_{1}\leq\lambda_{2}\leq\cdots\leq\lambda_{n}$
be the eigenvalues of $D^{2}u$ at $(x,t)$. As long as the convex solution to (\ref{e1.1})-(\ref{e1.3}) exists,
then $\exists \mu_{1}>0, \mu_{2}>0$ depending only on $F(D^{2}u_{0})$,  such that
$$\lambda_{1}\leq \mu_{1},  \,\,\, \lambda_{n}\geq \mu_{2}.$$
\end{lemma}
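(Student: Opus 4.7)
The plan is to read off the bounds on $\lambda_1$ and $\lambda_n$ directly from the structure conditions $(1.15a)$ and $(1.15b)$, once we combine them with the pointwise control on $F(D^2u)$ furnished by Lemma \ref{l5.1a}. Since $u$ is assumed strictly convex throughout $\Omega_T$, the eigenvalues $\lambda_1,\ldots,\lambda_n$ all lie in the positive cone on which the hypotheses are formulated.

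First I would fix an arbitrary $(x,t)\in\Omega_T$ and invoke Lemma \ref{l5.1a} together with the PDE $(1.1)$ to obtain
\begin{equation*}
\Theta_0\ \le\ F(D^2 u(x,t))\ \le\ \Theta_1,
\end{equation*}
where $\Theta_0=\min_{\bar\Omega}F(D^2 u_0)$ and $\Theta_1=\max_{\bar\Omega}F(D^2u_0)$. Because $u_0$ is uniformly convex on $\bar\Omega$, the eigenvalues of $D^2u_0$ lie in the positive cone $\Gamma_+$, so both $\Theta_0$ and $\Theta_1$ belong to the range class $\pounds$. Next I would apply condition $(1.15a)$ at the point $(x,t)$, which gives the sandwich
\begin{equation*}
f_1(\lambda_1)\ \le\ F(D^2u(x,t))\ \le\ f_2(\lambda_n).
\end{equation*}
Combining the two inequalities yields $f_1(\lambda_1)\le\Theta_1$ and $f_2(\lambda_n)\ge\Theta_0$.

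Now the two implications in $(1.15b)$ apply directly with $\Phi=\Theta_1\in\pounds$ and $\Psi=\Theta_0\in\pounds$: there exist constants $t_1,t_2>0$, determined only by $\Theta_0$, $\Theta_1$ (hence only by $F(D^2u_0)$), such that $\lambda_1\le t_1$ and $\lambda_n\ge t_2$. Setting $\mu_1:=t_1$ and $\mu_2:=t_2$ completes the proof. I do not anticipate any real technical obstacle here; the only subtlety worth flagging is that one must check that the threshold values $\Theta_0,\Theta_1$ are admissible inputs for $(1.15b)$, i.e.\ that they lie in $\pounds$. This is precisely what the uniform convexity of $u_0$ guarantees, and is the reason the constants $\mu_1,\mu_2$ end up depending only on the initial data through $F(D^2u_0)$ and not on $t$ or on the (not yet established) global $C^2$ bound for $u$.
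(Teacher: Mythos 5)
Your argument is correct and mirrors the paper's own proof: both combine the $\dot u$-estimates of Lemma \ref{l5.1a} with condition (\ref{e1.15a}) to get $f_1(\lambda_1)\le\Theta_1$ and $f_2(\lambda_n)\ge\Theta_0$, and then invoke (\ref{e1.15b}) to conclude. The only addition in your write-up is the (correct and worthwhile) observation that the admissibility check $\Theta_0,\Theta_1\in\pounds$ follows from the uniform convexity of $u_0$, a point the paper leaves implicit.
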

\begin{proof}
Using Lemma \ref{l5.1a} and  condition (\ref{e1.15a}),   we obtain
$$f_{1}(\lambda_{1})\leq \Theta_{1},\,\,\,f_{2}(\lambda_{n})\geq\Theta_{0}.$$
By the condition (\ref{e1.15b}), we get the desired results.
\end{proof}
By Lemma \ref{l5.1b}, the points $(\lambda_{1},\lambda_{2},\cdots, \lambda_{n})$ are always in $ \Gamma^{+}_{]\mu_{1},\mu_{2}[}$  under the flow. So we obtain the next lemma.
\begin{lemma}\label{l5.1c}
 Let $(x,t)$  be an arbitrary point of $\Omega_{T}$, and $\lambda_{1}\leq\lambda_{2}\leq\cdots\leq\lambda_{n}$
be the eigenvalues of $D^{2}u$ at $(x,t)$. As long as the convex solution to (\ref{e1.1})-(\ref{e1.3}) exists,
then $\exists \lambda>0, \Lambda>0$ depending only on $F(D^{2}u_{0})$,  such that $F$ satisfies the structure conditions (\ref{e1.16}), (\ref{e1.17}).
\end{lemma}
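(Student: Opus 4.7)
The statement of Lemma \ref{l5.1c} is essentially a direct combination of Lemma \ref{l5.1b} with the structural hypotheses (\ref{e1.16}) and (\ref{e1.17}) that were imposed on $F$ in the introduction. Hence the plan is short and essentially bookkeeping.

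First, I would recall that condition (\ref{e1.16}) and (\ref{e1.17}) are postulated to hold whenever the ordered eigenvalues lie in the set
\[
\Gamma^{+}_{]\mu_{1},\mu_{2}[}=\{(\lambda_{1},\ldots,\lambda_{n}) : 0\leq\lambda_{1}\leq\cdots\leq\lambda_{n},\ \lambda_{1}\leq\mu_{1},\ \lambda_{n}\geq\mu_{2}\},
\]
with corresponding positive constants $\lambda, \Lambda$ that depend only on $\mu_{1},\mu_{2}$. Thus, once one shows that the spectrum of $D^{2}u(x,t)$ remains inside some fixed $\Gamma^{+}_{]\mu_{1},\mu_{2}[}$ under the flow, the conclusion of Lemma \ref{l5.1c} follows by pulling back the constants $\lambda, \Lambda$ from the assumption.

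The key input is Lemma \ref{l5.1b}, which was proved just before and gives exactly such $\mu_{1},\mu_{2}>0$, depending only on $F(D^{2}u_{0})$, so that $\lambda_{1}\leq\mu_{1}$ and $\lambda_{n}\geq\mu_{2}$ at every point $(x,t)\in\Omega_{T}$ as long as the convex solution exists. Combined with the non-negativity $\lambda_{i}\geq 0$ ensured by the strict convexity of $u$, the ordered eigenvalues $(\lambda_{1},\ldots,\lambda_{n})$ of $D^{2}u(x,t)$ therefore belong to $\Gamma^{+}_{]\mu_{1},\mu_{2}[}$ pointwise in $\Omega_{T}$.

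Plugging this membership into hypotheses (\ref{e1.16}) and (\ref{e1.17}) yields positive constants $\lambda,\Lambda$, depending only on $\mu_{1},\mu_{2}$, and therefore only on $F(D^{2}u_{0})$, such that
\[
\Lambda\geq\sum_{i=1}^{n}\frac{\partial F}{\partial\lambda_{i}}\geq\lambda,\qquad
\Lambda\geq\sum_{i=1}^{n}\frac{\partial F}{\partial\lambda_{i}}\lambda_{i}^{2}\geq\lambda
\]
hold at $(x,t)$, completing the proof. There is no genuine obstacle: the only thing to verify carefully is that the dependencies are tracked correctly, namely that $\mu_{1},\mu_{2}$ produced in Lemma \ref{l5.1b} depend on no more than $F(D^{2}u_{0})$ (as is explicit in its statement), so that $\lambda,\Lambda$ inherit the same dependence through the hypotheses on $F$.
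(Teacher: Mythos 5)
Your proof follows the paper's argument exactly: combine Lemma \ref{l5.1b} (which places the spectrum of $D^{2}u$ in $\Gamma^{+}_{]\mu_{1},\mu_{2}[}$ with $\mu_1,\mu_2$ depending only on $F(D^2u_0)$) with the hypotheses (\ref{e1.16})--(\ref{e1.17}), which were postulated to hold on that set with constants $\lambda,\Lambda$ depending only on $\mu_1,\mu_2$. The paper presents this as a one-line observation between lemmas rather than a displayed proof, but the content is the same.
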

In the following, we always assume that  $\lambda$ and $ \Lambda$ are universal constants depending on the known data. In order to establish the $C^{2}$ estimates, first we make use of the method to do the strict obliqueness estimates, a parabolic version of
a result of J.Urbas \cite{JU} which  was given in \cite{OK}. Returning to Lemma \ref{l1.3}, we  get a uniform positive lower bound
of the quantity $\inf_{\partial\Omega}h_{p_{k}}(Du)\nu_{k}$ which does not depend on $t$ under the structure
conditions of $F$.
\begin{lemma}\label{l3.4}
As long as the strictly convex solution to (\ref{e1.1})-(\ref{e1.3}) exists,  the strict obliqueness estimates
can be obtained by
\begin{equation}\label{e3.4}
\langle\beta, \nu\rangle\geq \frac{1}{C_{1}}>0,
\end{equation}
where the constant $C_{1}$ is independent of $t$.
\end{lemma}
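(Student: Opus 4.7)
The plan is to follow the parabolic adaptation of J. Urbas's obliqueness estimate that Schn\"urer--Smoczyk and the first author have used in related problems. Set
$$v(x,t) := h_{p_k}(Du(x,t))\,\tilde h_{x_k}(x), \qquad (x,t)\in\bar\Omega\times[0,T],$$
so that $v|_{\partial\Omega} = \langle\beta,\nu\rangle$. Proposition \ref{p1.1} supplies the strictly convex solution on the existence interval, and Lemma \ref{l1.3} then ensures $v>0$ pointwise on $\partial\Omega\times[0,T]$; the task is to prove $\min_{\partial\Omega\times[0,T]} v \geq 1/C_1$ with $C_1$ independent of $T$.

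Let $(x_0,t_0)$ attain the minimum of $v$ on $\partial\Omega\times[0,T]$. If $t_0=0$ the bound is immediate from the strict convexity of $u_0$ together with Lemma \ref{l1.3} applied to the initial data. Otherwise, rotate coordinates so that at $x_0$ the inner normal is $\nu = e_n$ and $\beta(Du(x_0))$ lies in the $(e_1, e_n)$-plane, writing $\beta = b e_1 + c e_n$ with $c = v(x_0,t_0) > 0$; since $Du(x_0)\in\partial\tilde\Omega$ and $|Dh|_{\partial\tilde\Omega}=1$ we have $b^2+c^2=1$. Two identities are now available at $(x_0,t_0)$: first, tangentially differentiating the boundary condition $h(Du)=0$ along $\partial\Omega$ yields $\beta^k u_{k\alpha} = 0$ for $\alpha=1,\ldots,n-1$, hence $u_{n\alpha} = -(b/c)u_{1\alpha}$; second, the critical-point condition $v_\alpha = 0$ on the tangent space of $\partial\Omega$ expands to $h_{p_ip_k}(Du)\, u_{k\alpha}\,\tilde h_{x_i} + \beta^i \tilde h_{x_ix_\alpha} = 0$.

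Combining these relations, one tests the second identity against an appropriate tangential vector built from $\beta$ and $D^2u$ and applies the strict concavity bounds $h_{p_ip_j}\xi^i\xi^j \leq -\theta|\xi|^2$ and $\tilde h_{x_ix_j}\eta^i\eta^j \leq -\tilde\theta|\eta|^2$, together with $b^2+c^2=1$, to arrive at $c \geq 1/C_1$ for a constant $C_1$ depending only on $\theta,\tilde\theta,\Omega,\tilde\Omega$ and $u_0$. Since $(x_0,t_0)$ is also a minimum in time one has $\partial_t v \leq 0$ there, but this contributes no additional constraint because $v|_{\partial\Omega}$ involves $Du$ only, not $\dot u$.

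The main obstacle is the algebraic step in the final estimate: the substitution $u_{n\alpha}=-(b/c)u_{1\alpha}$ carries a $1/c$ factor that threatens to blow up in exactly the regime we want to exclude, so the strict concavity of $h$ must be applied to a direction whose $\beta$-component supplies a compensating factor of $c$ and yields a dimensionless positive lower bound. The uniform convexity of \emph{both} $\Omega$ and $\tilde\Omega$ is crucial here, providing the two positive constants $\theta$ and $\tilde\theta$. The structural hypotheses (\ref{e1.16})--(\ref{e1.17}) on $F$ play no role in this purely boundary-geometric argument; they will become essential only in the subsequent $C^2$ boundary estimates.
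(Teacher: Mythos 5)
Your overall outline---locating the minimum of an obliqueness function on $\partial\Omega\times[0,T]$, differentiating the boundary condition tangentially, and invoking the strict concavity of $h$ and $\tilde h$---is in the right spirit, but the central claim that the argument is ``purely boundary-geometric'' and that (\ref{e1.16})--(\ref{e1.17}) play no role is incorrect, and it conceals a genuine gap. At $(x_0,t_0)$ the two identities you write down constrain only \emph{tangential} derivatives: $\beta^k u_{k\alpha}=0$ and $v_\alpha=0$ for $\alpha<n$ say nothing about $v_n(x_0,t_0)$, and without one-sided control of that normal derivative the concavity of $h$ and $\tilde h$ cannot be made to close the estimate---there is one unconstrained quantity too many. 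No choice of tangential test vector built from $\beta$ and $D^2u$ removes this; the $1/c$ obstruction you flag is exactly the symptom. The paper supplies the missing input as the key estimate (\ref{e3.8}), $v_n(x_0,t_0)\geq -C$, proved with a barrier: one first checks $Lv\leq C\sum F^{ii}$ for $L=F^{ij}\partial_{ij}-\partial_t$, where the bounds on the terms $h_{p_kp_lp_m}\nu_k F^{ij}u_{li}u_{mj}$ and $h_{p_kp_l}F^{ij}u_{lj}u_{ki}$ are \emph{precisely} $\sum\frac{\partial F}{\partial\lambda_i}\lambda_i^2\leq\Lambda$ and $\sum\frac{\partial F}{\partial\lambda_i}\leq\Lambda$, i.e.\ (\ref{e1.16}) and (\ref{e1.17}); then $\Phi = v - v(x_0,t_0)+C_0\tilde h+A|x-x_0|^2$ is an $L$-supersolution nonnegative on the parabolic boundary of $\Omega_\delta\times[0,T]$, and the maximum principle gives (\ref{e3.8}). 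This is a PDE argument through the interior of $\Omega$, not algebra at a single boundary point.

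You also omit the identity (\ref{e3.0}), $\langle\beta,\nu\rangle = \sqrt{u^{ij}\nu_i\nu_j\,h_{p_k}h_{p_l}u_{kl}}$, which is what the paper actually bounds. Even after the barrier step, (\ref{e3.8}) only controls $u_{\beta\beta}=h_{p_k}h_{p_l}u_{kl}$ from below (or, in Case (i), bounds $h_{p_n}$ directly); for the other factor $u^{ij}\nu_i\nu_j$ one must run the \emph{same} barrier argument on the Legendre transform $u^*$, which solves the dual flow driven by $F^*$, using the hypothesis (\ref{e1.15}) that $F^*$ is concave and that it inherits (\ref{e1.16})--(\ref{e1.17}). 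Without the dual half of the argument you would, at best, control only one of the two factors in (\ref{e3.0}). Finally, the aside that $\partial_t v\leq 0$ at the minimum ``contributes nothing'' is misplaced: the time derivative does participate, precisely because $v$ is fed into the parabolic operator $L$ in the barrier construction.
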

\begin{proof}
Let $(x_{0},t_{0})\in \partial\Omega\times[0,T]$ such that
$$\langle\beta, \nu\rangle(x_{0},t_{0})=h_{p_{k}}(Du)\nu_{k}=\min_{\partial\Omega\times[0,T]}\langle\beta, \nu\rangle.$$
By the computation in \cite{JU}, we know
\begin{equation}\label{e3.0}
\langle\beta, \nu\rangle=\sqrt{u^{ij}\nu_{i}\nu_{j}h_{p_{k}}h_{p_{l}}u_{kl}}.
\end{equation}
Further on, we may assume that $t_{0}>0$ and $\nu(x_{0})=(0,0,\cdots,1)\triangleq e_{n}$. As in
the proof of Lemma 8.1 in \cite{OK}, by the convexity of $\Omega$ and its smoothness, we extend $\nu$ smoothly to a tubular neighborhood of $\partial\Omega$ such that in matrix sense
\begin{equation}\label{e3.5}
D_{k}\nu_{l}\equiv \nu_{kl}\leq -\frac{1}{C}\delta_{kl}
\end{equation}
for some positive constant $C$. Let
$$v=\langle\beta, \nu\rangle+h(Du).$$
By the above assumptions and the boundary condition, we obtain  $$v(x_{0},t_{0})=\min_{\partial\Omega\times[0,T]}v
=\min_{\partial\Omega\times[0,T]}\langle\beta, \nu\rangle.$$
In $(x_{0},t_{0})$,  we have
\begin{equation}\label{e3.6}
0=v_{r}=h_{p_{n}p_{k}}u_{kr}+ h_{p_{k}}\nu_{kr}+h_{p_{k}}u_{kr},\quad 1\leq r\leq n-1,
\end{equation}
We assume that the following key estimate holds which will be proved later ,
\begin{equation}\label{e3.8}
v_{n}(x_{0},t_{0})\geq -C,
\end{equation}
where $C$ is a constant depending only on $\Omega$, $u_{0}$, $h$, $\epsilon_{0}$,  $\tilde{h}$ and we will use the conditions of
(\ref{e1.4aa}), (\ref{e1.15}), (\ref{e1.16}), (\ref{e1.17}). It's not hard to check that
 (\ref{e3.8}) can be rewritten as
\begin{equation}\label{e3.9}
h_{p_{n}p_{k}}u_{kn}+ h_{p_{k}}\nu_{kn}+h_{p_{k}}u_{kn}\geq -C.
\end{equation}
Multiplying (\ref{e3.9}) with $h_{p_{n}}$ and (\ref{e3.6}) with $h_{p_{r}}$ respectively,
and summing up together,  we obtain
\begin{equation}\label{e3.10}
h_{p_{k}}h_{p_{l}}u_{kl}\geq -Ch_{p_{n}}-h_{p_{k}}h_{p_{l}}\nu_{kl}-h_{p_{k}}h_{p_{n}p_{l}}u_{kl}.
\end{equation}
By the concavity of $h$, we can easily check that
$$-h_{p_{n}p_{n}}\geq 0, \quad h_{p_{k}}u_{kr}=\frac{\partial h(Du)}{\partial x_{r}}=0,\quad
h_{p_{k}}u_{kn}=\frac{\partial h(Du)}{\partial x_{n}}\geq 0.$$
Substituting those into (\ref{e3.10}) and using (\ref{e3.5}),  we have
\begin{equation*}
h_{p_{k}}h_{p_{l}}u_{kl}\geq -Ch_{p_{n}}+\frac{1}{C}|Dh|^{2}=-Ch_{p_{n}}+\frac{1}{C}.
\end{equation*}
For the last term of the above inequality, we distinguish two cases in $(x_{0},t_{0})$.

Case (i).
$$ -Ch_{p_{n}}+\frac{1}{C}\leq 0.$$
Then
$$h_{p_{k}}(Du)\nu_{k}=h_{p_{n}}\geq\frac{1}{C^{2}}.$$
It shows that there is a uniform positive lower bound
for the quantity $\min_{\partial\Omega\times[0,T]}h_{p_{k}}(Du)\nu_{k}$.

Case (ii).
$$ -Ch_{p_{n}}+\frac{1}{C}>0.$$
Then we obtain a positive lower bound of $h_{p_{k}}h_{p_{l}}u_{kl}$. Introduce the Legendre transformation of $u$:
\begin{equation*}
y_{i}=\frac{\partial u}{\partial x_{i}}
,\,\,i=1,2,\cdots,n,\,\,\,u^{*}(y_{1},\cdots,y_{n},t):=\sum_{i=1}^{n}x_{i}\frac{\partial u}{\partial x_{i}}-u(x,t).
\end{equation*}
In terms of $y_{1},\cdots,y_{n}, u^{*}(y_{1},\cdots,y_{n})$, we can easily check that
$$\frac{\partial^{2} u^{*}}{\partial y_{i}\partial y_{j}}=[\frac{\partial^{2} u}{\partial x_{i}\partial x_{j}}]^{-1}.$$
Then $u^{*}$ satisfies
\begin{equation}\label{e3.12}
\left\{ \begin{aligned}\frac{\partial u^{*}}{\partial t}-F^{*}(D^{2}u^{*})&=0,
& T>t>0,\quad x\in \tilde{\Omega}, \\
\tilde{h}(Du^{*})&=0,& \qquad T>t>0,\quad x\in\partial\tilde{\Omega},\\
 u^{*}&=u^{*}_{0}, & \qquad\quad t=0,\quad x\in \tilde{\Omega},
\end{aligned} \right.
\end{equation}
where $\tilde{h}$ is a smooth and strictly concave function on $\bar{\Omega}$:
$$\Omega=\{p\in\mathbb{R}^{n} |\tilde{h}(p)>0\},\qquad |D\tilde{h}|_{{\partial\tilde{\Omega}}}=1,$$
and $u^{*}_{0}$ is the Legendre transformation of $u_{0}$. Here $F^{*}$ is defined by (\ref{e1.15}).
Set $$F^{*}(\lambda_{1},\lambda_{2},\cdots, \lambda_{n})=-F(\mu_{1},\mu_{2},\cdots, \mu_{n}),\qquad \mu_{i}=\lambda^{-1}_{i} (i=1,2,\cdots,n).$$
A simple calculation shows that
$$\sum^{n}_{i=1}\frac{\partial F^{*}}{\partial \lambda_{i}}=\sum^{n}_{i=1}\mu^{2}_{i}\frac{\partial F}{\partial \mu_{i}}
,\qquad \sum^{n}_{i=1}\lambda^{2}_{i}\frac{\partial F^{*}}{\partial \lambda_{i}}=\sum^{n}_{i=1}\frac{\partial F}{\partial \mu_{i}}.$$
Then the structure conditions of $F$ imply that $F^{*}$ also satisfies (\ref{e1.16}) and (\ref{e1.17}).
We also define
$$\tilde{v}=\tilde{\beta}^{k}\tilde{\nu}_{k}+\tilde{h}(Du^{*})=\langle\tilde{\beta}, \tilde{\nu}\rangle+\tilde{h}(Du^{*}),$$
where$$\tilde{\beta}^{k}\triangleq\frac{\partial \tilde{h}(Du^{*})}{\partial u^{*}_{k}}=\tilde{h}_{p_{k}}(Du^{*}),$$
and  $\tilde{\nu}=(\tilde{\nu}_{1}, \tilde{\nu}_{2},\cdots,\tilde{\nu}_{n})$ is the inner unit normal
 vector of $\partial\tilde{\Omega}$.
Using the same methods, under the assumption of
\begin{equation*}\label{e3.13}
\tilde{v}_{n}(y_{0},t_{0})\geq -C,
\end{equation*}
we obtain the positive lower bounds of $\tilde{h}_{p_{k}}\tilde{h}_{p_{l}}u^{*}_{kl}$
or
$$h_{p_{k}}(Du)\nu_{k}=\tilde{h}_{p_{k}}(Du^{*})\tilde{\nu}_{k}(y_{0})=\tilde{h}_{p_{n}}\geq\frac{1}{C^{2}}.$$
We notice that
$$\tilde{h}_{p_{k}}\tilde{h}_{p_{l}}u^{*}_{kl}=\nu_{i}\nu_{j}u^{ij}.$$
Then the claim follows from (\ref{e3.0}) by the positive lower bounds of $h_{p_{k}}h_{p_{l}}u_{kl}$ and $\tilde{h}_{p_{k}}\tilde{h}_{p_{l}}u^{*}_{kl}$.

It remains to prove the key estimate (\ref{e3.8}). We generalize the proof of Lemma 8.1 in \cite{OK} for the goal.

Define the linearized operator by
$$L=F^{ij}\partial_{ij}-\partial_{t}.$$
Since $D^{2}\tilde{h}\leq-\tilde{\theta}I$,  we obtain
\begin{equation}\label{e3.151}
\aligned
L\tilde{h} &\leq  -\tilde{\theta}\sum F^{ii}.\\
\endaligned
\end{equation}
On the other hand,
\begin{equation}\label{e3.100}
\aligned
Lv=&h_{p_{k}p_{l}p_{m}}\nu_{k}F^{ij}u_{li}u_{mj}+2h_{p_{k}p_{l}}F^{ij}\nu_{kj}u_{li}
+h_{p_{k}p_{l}}F^{ij}u_{lj}u_{ki}\\
&+h_{p_{k}p_{l}}\nu_{k}Lu_{l}+h_{p_{k}}L\nu_{k}.
\endaligned
\end{equation}
By estimating the first term in the diagonal basis, we have
$$\mid h_{p_{k}p_{l}p_{m}}\nu_{k}F^{ij}u_{li}u_{mj}\mid\leq C\sum^{n}_{i=1}\frac{\partial F}{\partial \lambda_{i}}\lambda^{2}_{i}\leq C,$$
where we use the assumption of (\ref{e1.17}) and  $C$ is a constant depending only on $h$, $\Omega$, $\lambda$, $\Lambda$.
For the second term, by Cauchy inequality, we obtain
\begin{equation*}
\begin{aligned}
\mid2h_{p_{k}p_{l}}F^{ij}\nu_{kj}u_{li}\mid\leq C\sum^{n}_{i=1}\frac{\partial F}{\partial \lambda_{i}}\lambda_{i}
&=C\sum^{n}_{i=1}\sqrt{\frac{\partial F}{\partial \lambda_{i}}}\sqrt{\frac{\partial F}{\partial \lambda_{i}}}\lambda_{i}\\
&\leq C(\sum^{n}_{i=1}\frac{\partial F}{\partial \lambda_{i}})(\sum^{n}_{i=1}\frac{\partial F}{\partial \lambda_{i}}\lambda^{2}_{i})\\
&\leq C.
\end{aligned}
\end{equation*}
By the same reason, we get
$$ \mid h_{p_{k}p_{l}}F^{ij}u_{lj}u_{ki}\mid\leq C.$$
After  simple calculation,  we give
$$Lu_{l}=0.$$
Obviously we have
$$|h_{p_{k}}L\nu_{k}|\leq C\sum F^{ii}.$$
So there exists a positive constant $C$ such that
\begin{equation}\label{e3.15}
Lv\leq C\sum F^{ii}.
\end{equation}
Here we use the condition (\ref{e1.16}) and $C$ depends  only on $h$, $\Omega$, $\lambda$, $\Lambda$.

Denote a neighborhood of $x_{0}$:
$$\Omega_{\delta}\triangleq \Omega\cap B_{\delta}(x_{0}),$$
where $\delta$ is a positive constant such that $\nu$ is well defined in $\Omega_{\delta}$.
We consider
$$\Phi\triangleq v(x,t)-v(x_{0},t_{0})+C_{0}\tilde{h}(x)+A|x-x_{0}|^{2},$$
where $C_{0}$ and $A$ are positive constants to be determined. On $\partial\Omega\times[0,T)$,
it is clear that $\Phi\geq 0.$ Since $v$ is bounded, we can select $A$
large enough such that
\begin{equation*}
\begin{aligned}
&(v(x,t)-v(x_{0},t_{0})+C_{0}\tilde{h}(x)+A|x-x_{0}|^{2})|_{(\Omega\cap\partial B_{\delta}(x_{0}))\times[0,T]}\\
 &\geq v(x,t)-v(x_{0},t_{0})+A\delta^{2}\\
 &\geq 0.
\end{aligned}
\end{equation*}
Using the strict concavity of $\tilde{h}$, we have
$$ \triangle(C_{0}\tilde{h}(x)+A|x-x_{0}|^{2})\leq C(-C_{0}\tilde{\theta}+2A)\sum F^{ii}.$$
Then by choosing the constant $C_{0}\gg A$,
we can show that
\begin{equation*}
\begin{aligned}
\triangle(v(x,0)-v(x_{0},t_{0})+C_{0}\tilde{h}(x)+A|x-x_{0}|^{2})\leq 0.
\end{aligned}
\end{equation*}
We calculate by using the maximum principle to get
\begin{equation*}
\begin{aligned}
&(v(x,0)-v(x_{0},t_{0})+C_{0}\tilde{h}(x)+A|x-x_{0}|^{2})|_{\Omega_{\delta}}\\
&\geq\min_{(\partial\Omega\cap B_{\delta}(x_{0}))\cup(\Omega\cap\partial B_{\delta}(x_{0})} (v(x,0)-v(x_{0},t_{0})+C_{0}\tilde{h}(x)+A|x-x_{0}|^{2})
\\
&\geq 0.
\end{aligned}
\end{equation*}
Combining (\ref{e3.151}) with (\ref{e3.15}) and letting $C_{0}$ be large enough,  we obtain $$L\Phi\leq (-C_{0}\tilde{\theta}+C+2A)\sum F^{ii}\leq 0.$$
 From the above arguments, we verify that
$\Phi$ satisfies
\begin{equation}\label{e3.16}
\left\{ \begin{aligned}L\Phi&\leq 0,\qquad
&(x,t)\in\Omega_{\delta}\times[0,T] , \\
\Phi&\geq 0,\qquad &(x,t)\in(\partial\Omega_{\delta}\times[0,T]\cup(\Omega_{\delta}\times\{t=0\}.
\end{aligned} \right.
\end{equation}
Using the maximum principle, we deduce that $$\Phi\geq 0,\qquad (x,t)\in\Omega_{\delta}\times[0,T].$$
Combining the above inequality with $\Phi(x_{0},t_{0})=0$, we obtain $\langle\nabla\Phi,e_{n}\rangle|_{(x_{0},t_{0})}\geq0$
which gives the desired key estimate (\ref{e3.8}).  Thus we complete the proof of the lemma.
\end{proof}

By making use of  ({\ref{e3.15}),   we can state the following result which is similar to Proposition 2.6 in \cite{SM}.
\begin{lemma}\label{l3.0}
Fix a smooth function $H: \Omega\times\tilde{\Omega}\rightarrow R$ and define $\varphi(x,t)=H(x,Du(x,t))$. Then
there holds
\begin{equation*}\label{e3.170}
|L\varphi|\leq C\sum F^{ii},\quad (x,t)\in \Omega_{T},
\end{equation*}
where $C$ is a positive constant depending on $h$, $H$, $\Omega$, $\lambda$, $\Lambda$.
\end{lemma}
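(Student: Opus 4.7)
The plan is to compute $L\varphi$ directly via the chain rule, cancel the third-order derivative terms using the evolution equation (\ref{e1.1}), and then bound the remaining pieces using the structure conditions (\ref{e1.16})--(\ref{e1.17}), exactly in the spirit of the estimate (\ref{e3.15}) from the proof of Lemma \ref{l3.4}.

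First I would write down
$$\varphi_i=H_{x_i}+H_{p_k}u_{ki},\qquad \varphi_t=H_{p_k}u_{kt},$$
together with
$$\varphi_{ij}=H_{x_ix_j}+H_{x_ip_k}u_{kj}+H_{x_jp_k}u_{ki}+H_{p_kp_l}u_{ki}u_{lj}+H_{p_k}u_{kij}.$$
Differentiating $u_t=F(D^2u)$ in $x_k$ yields $u_{kt}=F^{ij}u_{ijk}$, so $H_{p_k}u_{kt}=H_{p_k}F^{ij}u_{ijk}$. This is precisely the only third-order contribution inside $F^{ij}\varphi_{ij}$, so it cancels against $\varphi_t$ and leaves
$$L\varphi=F^{ij}H_{x_ix_j}+2F^{ij}H_{x_ip_k}u_{kj}+F^{ij}H_{p_kp_l}u_{ki}u_{lj}.$$

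Next, at any $(x,t)\in\Omega_T$, I would choose the orthonormal frame in which $D^2u$ is diagonal with entries $\lambda_1,\ldots,\lambda_n$; then $F^{ij}$ is simultaneously diagonal with $F^{ii}=\partial F/\partial\lambda_i$. In this frame the three terms above are bounded respectively by $C\sum F^{ii}$, by $C\sum F^{ii}|\lambda_i|$, and by $C\sum F^{ii}\lambda_i^2$, where $C$ depends only on $\|H\|_{C^2}$. For the middle term, Cauchy--Schwarz together with (\ref{e1.16}) and (\ref{e1.17}) gives
$$\sum F^{ii}|\lambda_i|\leq\Big(\sum F^{ii}\Big)^{1/2}\Big(\sum F^{ii}\lambda_i^2\Big)^{1/2}\leq\Lambda,$$
and the last term is directly bounded by $\Lambda$ via (\ref{e1.17}). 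Converting the resulting uniform constants back into multiples of $\sum F^{ii}$ is legitimate because $\sum F^{ii}\geq\lambda>0$ from (\ref{e1.16}), so any constant $K$ satisfies $K\leq(K/\lambda)\sum F^{ii}$. Combining yields $|L\varphi|\leq C\sum F^{ii}$ with $C=C(h,H,\Omega,\lambda,\Lambda)$, noting that Lemma \ref{l5.1c} guarantees the relevant eigenvalues stay in $\Gamma^{+}_{]\mu_1,\mu_2[}$ so that (\ref{e1.16})--(\ref{e1.17}) are available throughout the flow.

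I expect no serious obstacle here: the whole argument is a one-line chain-rule computation followed by the same Cauchy--Schwarz trick already used to derive (\ref{e3.15}). The only subtlety worth flagging is to perform the third-derivative cancellation \emph{before} taking absolute values, since $F^{ij}u_{ijk}$ on its own is not controlled by $\sum F^{ii}$, and doing the estimate in the $D^2u$-diagonalizing frame is crucial for reducing the second term to the form $\sum F^{ii}|\lambda_i|$ where the Cauchy--Schwarz bound via (\ref{e1.16})--(\ref{e1.17}) applies.
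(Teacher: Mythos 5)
Your proposal is correct and is essentially the paper's own approach: the paper does not write out a proof but merely invokes the computation behind the differential inequality (\ref{e3.15}) and cites Proposition 2.6 of \cite{SM}, and your chain-rule expansion combined with the cancellation $H_{p_k}Lu_k=0$, the passage to the $D^2u$-diagonalizing frame, and the Cauchy--Schwarz bound via (\ref{e1.16})--(\ref{e1.17}) is exactly that computation carried out explicitly for a general $H(x,Du)$. The absorption of constants into $C\sum F^{ii}$ via $\sum F^{ii}\geq\lambda$ and the use of Lemma \ref{l5.1c} to keep the eigenvalues in $\Gamma^{+}_{]\mu_1,\mu_2[}$ are also handled correctly.
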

We  now proceed to carry out the $C^{2}$  estimates. The procedure is similar to a priori estimates on the second boundary value problem for Lagrangian mean curvature flow \cite{HR}. The strategy is to
 bound the interior second derivative first.
\begin{lemma}\label{l3.5}
For each $t\in [0,T]$,  the following estimates hold:
\begin{equation}\label{e3.170}
\sup_{\Omega}\mid D^{2}u\mid\leq \max_{\partial\Omega\times[0,T]}\mid D^{2}u\mid+\max_{\bar{\Omega}}\mid D^{2}u_{0}\mid.
\end{equation}
\end{lemma}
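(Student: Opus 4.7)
The plan is to reduce the bound on $|D^{2}u|$ to a maximum-principle statement about directional second derivatives $u_{\xi\xi}$ for unit vectors $\xi\in\mathbb{R}^n$. Since $u(\cdot,t)$ is strictly convex, every eigenvalue of $D^{2}u$ is non-negative, and the operator norm satisfies $|D^{2}u|=\sup_{|\xi|=1}u_{\xi\xi}\geq 0$. Thus it suffices to prove
\begin{equation*}
\max_{\overline{\Omega}_{T}}u_{\xi\xi}\leq \max_{\partial\Omega\times[0,T]}u_{\xi\xi}+\max_{\bar{\Omega}}(u_{0})_{\xi\xi}
\end{equation*}
for every fixed unit direction $\xi$, since both terms on the right are non-negative by convexity.

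Next I would differentiate the evolution equation $u_{t}=F(D^{2}u)$ twice in the direction $\xi$. The first differentiation yields $u_{t\xi}=F^{ij}u_{ij\xi}$, and a further differentiation gives
\begin{equation*}
\partial_{t}u_{\xi\xi}=F^{ij}\,\partial_{ij}u_{\xi\xi}+F^{ij,kl}(D^{2}u)\,u_{ij\xi}u_{kl\xi}.
\end{equation*}
Here the concavity assumption (\ref{e1.15}) on $F$ forces $F^{ij,kl}\eta_{ij}\eta_{kl}\leq 0$ for every symmetric tensor $\eta$; applied with $\eta_{ij}=u_{ij\xi}$ this makes the last term non-positive. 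Consequently, with the linearized operator $L=F^{ij}\partial_{ij}-\partial_{t}$ from the proof of Lemma \ref{l3.4},
\begin{equation*}
L(u_{\xi\xi})\geq 0\qquad\text{in }\Omega_{T}.
\end{equation*}
Because $u\in C^{2+\alpha,1+\alpha/2}(\bar{\Omega}_{T})\cap C^{4+\alpha,2+\alpha/2}(\Omega_{T})$ and, by Lemma \ref{l5.1c}, $F^{ij}$ is uniformly positive definite along the flow, the classical parabolic maximum principle applies to the subsolution $u_{\xi\xi}$ on the cylinder $\Omega_{T}$.

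Invoking that maximum principle gives
\begin{equation*}
\max_{\overline{\Omega}_{T}}u_{\xi\xi}\leq\max\bigl\{\max_{\partial\Omega\times[0,T]}u_{\xi\xi},\;\max_{\bar{\Omega}\times\{0\}}u_{\xi\xi}\bigr\}=\max\bigl\{\max_{\partial\Omega\times[0,T]}u_{\xi\xi},\;\max_{\bar{\Omega}}(u_{0})_{\xi\xi}\bigr\},
\end{equation*}
and since both quantities on the right are non-negative, the maximum is dominated by their sum. Taking the supremum over unit vectors $\xi$ and using $|D^{2}u|=\sup_{|\xi|=1}u_{\xi\xi}$ yields (\ref{e3.170}). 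The only substantive ingredient beyond routine bookkeeping is the concavity of $F$, which is exactly the hypothesis needed to dispose of the quadratic second-variation term $F^{ij,kl}u_{ij\xi}u_{kl\xi}$; without it one would have no sign for this term and the maximum principle argument would fail. I therefore do not expect any genuine obstacle in executing this outline.
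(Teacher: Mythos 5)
Your proof is correct and follows essentially the same strategy as the paper: differentiate the evolution equation twice in a fixed direction $\xi$, use the concavity of $F$ to show $u_{\xi\xi}$ is a subsolution of the linearized operator $L$, apply the parabolic maximum principle, and pass from the directional bound to $|D^{2}u|$ via the convexity of $u$. Your version even spells out the reduction $|D^{2}u|=\sup_{|\xi|=1}u_{\xi\xi}$ and the uniform parabolicity needed for the maximum principle, which the paper leaves implicit.
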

\begin{proof}
Given any unit vector $\xi$,  according to the concavity of F,  we know that $u_{\xi\xi}$ satisfies
$$\partial_{t}u_{\xi\xi}-F^{ij}\partial_{ij}u_{\xi\xi}
=\frac{\partial^{2}F}{\partial u_{ij}\partial u_{kl}}u_{ij\xi}u_{kl\xi}\leq 0.$$
Combining with the convexity of $u$ and using the maximum principle, we obtain
\begin{equation*}
\begin{aligned}
0\leq |u_{\xi\xi}|=u_{\xi\xi}(x,t)&\leq \max_{\partial\Omega_{T}}u_{\xi\xi}\\
 &\leq\max_{\partial\Omega\times[0,T]}\mid D^{2}u\mid+\max_{\bar{\Omega}}\mid D^{2}u_{0}\mid.
\end{aligned}
\end{equation*}
Therefore we obtain the desired estimates (\ref{e3.170}).
\end{proof}
By taking tangential differentiation on  the boundary
condition $h(Du)=0$,  we have
\begin{equation}\label{e3.17}
u_{\beta\tau}=h_{p_{k}}(Du)u_{k\tau}=0,
\end{equation}
where $\tau$  denotes a tangential  vector.  The second order derivative estimates on
the boundary are controlled by $u_{\beta\tau}, u_{\beta\beta}, u_{\tau\tau}$. In the following,
 we give the arguments as in \cite{JU} which can be found in \cite{HR}. For $x\in\partial\Omega$, any unit vector $\xi$ can be written in terms of
 a tangential component $\tau(\xi)$ and a component in the direction $\beta$ by
 $$\xi=\tau(\xi)+\frac{\langle\nu,\xi\rangle}{\langle\beta,\nu\rangle}\beta,$$
 where
 $$\tau(\xi)=\xi-\langle\nu,\xi\rangle\nu-\frac{\langle\nu,\xi\rangle}{\langle\beta,\nu\rangle}\beta^{T}$$
 and
 $$\beta^{T}=\beta-\langle\beta,\nu\rangle\nu.$$
After a simple computation, we get
\begin{equation}\label{e3.19}
\begin{aligned}
|\tau(\xi)|^{2}&=1-(1-\frac{|\beta^{T}|^{2}}{\langle\beta,\nu\rangle^{2}})\langle\nu,\xi\rangle^{2}
-2\langle\nu,\xi\rangle\frac{\langle\beta^{T},\xi\rangle}{\langle\beta,\nu\rangle}\\
&\leq 1+C\langle\nu,\xi\rangle^{2}-2\langle\nu,\xi\rangle\frac{\langle\beta^{T},\xi\rangle}{\langle\beta,\nu\rangle}\\
&\leq C,
\end{aligned}
\end{equation}
where we use the strict obliqueness estimates (\ref{e3.4}). Let $\tau\triangleq \frac{\tau(\xi)}{|\tau(\xi)|}$.
Then by (\ref{e3.17}) and (\ref{e3.4}), we obtain
\begin{equation}\label{e3.20}
\begin{aligned}
u_{\xi\xi}&=|\tau(\xi)|^{2}u_{\tau\tau}+2|\tau(\xi)|\frac{\langle\nu,\xi\rangle}{\langle\beta,\nu\rangle}u_{\beta\tau}+
\frac{\langle\nu,\xi\rangle^{2}}{\langle\beta,\nu\rangle^{2}}
u_{\beta\beta}\\
&=|\tau(\xi)|^{2}u_{\tau\tau}+\frac{\langle\nu,\xi\rangle^{2}}{\langle\beta,\nu\rangle^{2}}
u_{\beta\beta}\\
&\leq C(u_{\tau\tau}+u_{\beta\beta}).
\end{aligned}
\end{equation}
Along with specifying the boundary conditions,
we can carry out the double derivative estimates in the direction $\beta$.
\begin{lemma}\label{l3.6}
For any $t\in [0,T]$, we have the estimates
 $$\max_{\partial\Omega} u_{\beta\beta}\leq C_{2},$$
 where $C_{2}>0$ depends only on $u_{0}$, $h$, $\tilde{h}$, $\Omega$, $\lambda$, $\Lambda$.
\end{lemma}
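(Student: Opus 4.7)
The plan is to adapt the barrier argument of Lemma \ref{l3.4} to the quantity $u_{\beta\beta}$. Introduce the globally defined auxiliary function
\[
w(x,t) = h_{p_k}(Du(x,t))\,h_{p_l}(Du(x,t))\,u_{kl}(x,t),
\]
which agrees with $u_{\beta\beta}$ on $\partial\Omega\times[0,T]$. Let $M=\max_{\partial\Omega\times[0,T]}w$, attained at some $(x_0,t_0)$. If $t_0=0$, the bound follows from $\|u_0\|_{C^2(\bar\Omega)}$; otherwise rotate coordinates so $\nu(x_0)=e_n$.

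First I would estimate $Lw$, where $L=F^{ij}\partial_{ij}-\partial_t$. Differentiating (\ref{e1.1}) once gives $Lu_k=0$; differentiating again and using the concavity condition (\ref{e1.15}) gives
\[
Lu_{kl}=-F^{ij,mn}u_{ijk}u_{mnl},
\]
so that $h_{p_k}h_{p_l}Lu_{kl}=-F^{ij,mn}(h_{p_k}u_{ijk})(h_{p_l}u_{mnl})\geq 0$. Expanding $Lw$ by the product rule, the leading term is this nonnegative expression, and there appear mixed terms of the form $F^{ij}H^{kl}_{,m}u_{mi}u_{klj}$ and $F^{ij}H^{kl}_{,mn}u_{mi}u_{nj}u_{kl}$ with $H^{kl}(p)=h_{p_k}(p)h_{p_l}(p)$. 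Absorbing the third-order cross terms into the concavity term via Cauchy--Schwarz and invoking the structure conditions (\ref{e1.16})--(\ref{e1.17}) together with the uniform bound on $|Du|$ coming from $Du(\Omega)=\tilde\Omega$, one obtains an inequality of the form
\[
Lw\geq -C_3\sum_{i}F^{ii},
\]
where $C_3$ depends only on $h$, $\tilde\Omega$, $\lambda$, $\Lambda$.

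Next, on the patch $\Omega_\delta=\Omega\cap B_\delta(x_0)$ I would set up the barrier
\[
\Phi(x,t)=M-w(x,t)+A_1\tilde h(x)+A_2|x-x_0|^2.
\]
Choosing $A_2$ large relative to $\sup|w|$ (bounded in terms of $M$ and known data) makes $\Phi\geq 0$ on $\Omega\cap\partial B_\delta(x_0)\times[0,T]$; at $t=0$, $\Phi\geq 0$ by the initial $C^2$ bound; on $\partial\Omega\cap B_\delta(x_0)$, $M-w\geq 0$ by the choice of $(x_0,t_0)$, and $\tilde h,|x-x_0|^2\geq 0$. Using $L\tilde h\leq-\tilde\theta\sum F^{ii}$ (as in Lemma \ref{l3.4}) and $L|x-x_0|^2=2\sum F^{ii}$ together with the estimate on $Lw$, one has $L\Phi\leq(C_3+2A_2-A_1\tilde\theta)\sum F^{ii}\leq 0$ after choosing $A_1\gg A_2$. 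Since $\Phi(x_0,t_0)=0$ is a parabolic-boundary minimum, the Hopf lemma yields $\partial_\nu\Phi(x_0,t_0)\geq 0$, hence $\partial_\nu w(x_0,t_0)\leq A_1$.

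Finally I would compute $\partial_\nu w(x_0,t_0)$ directly. Since $w|_{\partial\Omega}$ attains its max at $x_0$, its tangential derivatives vanish there, and by differentiating $h(Du)=0$ twice along $\partial\Omega$ one relates $u_{\beta\tau\tau}$ and $u_{\beta\beta\nu}$ to lower-order quantities and to $u_{\beta\beta}(x_0,t_0)=M$, via the Hessian of $h$, the second fundamental form of $\partial\Omega$, and the strict obliqueness $\langle\beta,\nu\rangle\geq 1/C_1$. This gives a lower bound $\partial_\nu w(x_0,t_0)\geq c_0 M-C$ with $c_0>0$ depending on known data; combined with $\partial_\nu w(x_0,t_0)\leq A_1$, it yields $M\leq C_2$. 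The main obstacle is Step~1: carefully absorbing the third-derivative cross terms into the concavity-generated nonnegative quantity, which is precisely where conditions (\ref{e1.16}) and (\ref{e1.17}) become essential. A secondary technical difficulty is extracting the lower bound $\partial_\nu w\geq c_0 M-C$ at $(x_0,t_0)$ from the twice-differentiated boundary condition, since one must distinguish tangential and normal components of $\beta$ and keep control through the inequality $\langle\beta,\nu\rangle\geq 1/C_1$.
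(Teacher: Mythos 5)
The paper's proof is far shorter and hinges on an algebraic identity you've missed: since $\beta^k=h_{p_k}(Du)$, one has
\begin{equation*}
\partial_\beta\bigl(h(Du)\bigr)=\beta^m\,\partial_m h(Du)=h_{p_m}(Du)\,h_{p_k}(Du)\,u_{km}=u_{\beta\beta}.
\end{equation*}
So instead of putting the \emph{second}-order quantity $w=h_{p_k}h_{p_l}u_{kl}$ into a barrier, the paper uses the \emph{first}-order barrier $\Psi=\pm h(Du)+C_0\tilde h+A|x-x_0|^2$. Then $L(\pm h(Du))$ only involves second derivatives of $u$ and is bounded by $C\sum F^{ii}$ via Lemma~\ref{l3.0}, so $L\Psi\leq 0$ and $\Psi\geq 0$ on the parabolic boundary, the maximum principle and Hopf lemma give $\Psi_\beta(x_0,t_0)\geq 0$, and since $\partial_\beta h(Du)=u_{\beta\beta}$ this reads $|u_{\beta\beta}(x_0,t_0)|\leq C$. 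No third derivatives of $u$ ever appear.

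Your Step~1 has a genuine gap. After the product rule the troublesome term is $2F^{ij}H^{kl}_{,i}u_{klj}$ with $H^{kl}_{,i}=2h_{p_kp_r}u_{ri}h_{p_l}$; this is bilinear in $(u_{ri})$ and $h_{p_l}u_{klj}=X_{kj}$, and you propose to absorb it into the nonnegative concavity term $-F^{ij,mn}X_{ij}X_{mn}$ by Cauchy--Schwarz. But under the hypotheses (\ref{e1.15}) the Hessian $F^{ij,mn}$ is merely negative \emph{semi}-definite and may degenerate, so the quadratic form does not control an arbitrary linear functional of $X$. Even for $F=\sum\arctan\lambda_i$, where $F''$ is strictly negative definite on bounded eigenvalue ranges, the quantitative lower bound degenerates as $\lambda_n\to\infty$; the uniform eigenvalue bounds that would rescue this come from Lemma~\ref{l3.10}, which is proved \emph{after} and \emph{using} Lemma~\ref{l3.6}, so the argument would be circular.

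Step~3 also does not close. At the boundary maximum you need a lower bound for $\partial_\nu w=2h_{p_kp_m}u_{m\nu}h_{p_l}u_{kl}+h_{p_k}h_{p_l}u_{kl\nu}$, whose last term is the triple derivative $u_{\beta\beta\nu}$. Differentiating $h(Du)=0$ twice \emph{tangentially} gives information about $u_{\beta\tau\tau}$ only; it cannot produce $u_{\beta\beta\nu}$, and there is no reason for $\partial_\nu w$ to grow linearly in $M$. (The analogous trick does work in Lemma~\ref{l3.7}, where the quantity being bounded is the tangential $u_{\tau\tau}$ and two \emph{tangential} differentiations of $h(Du)=0$ produce the superlinear term $\tilde Cu_{11}^2\leq u_{11\beta}$; that structure is absent here.) Replacing $w$ by $\pm h(Du)$ as above resolves both difficulties at once.
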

\begin{proof}
We use the  barrier functions  for any $x_{0}\in\partial\Omega$ and thus
consider
$$\Psi\triangleq\pm h(Du)+C_{0}\tilde{h}+A|x-x_{0}|^{2}.$$
 As in  the proof of (\ref{e3.16}),
we can find  constants $C_{0}$ and  $A$,  such that
\begin{equation*}\label{e3.21}
\left\{ \begin{aligned}L\Psi&\leq 0,\qquad
&(x,t)\in\Omega_{\delta}\times[0,T] , \\
\Psi&\geq 0,\qquad &(x,t)\in(\partial\Omega_{\delta}\times[0,T]\cup(\Omega_{\delta}\times\{t=0\}.
\end{aligned} \right.
\end{equation*}
By  the maximum principle,  we get
$$\Psi\geq 0,\qquad (x,t)\in\Omega_{\delta}\times[0,T].$$
Combining the above inequality with $\Psi(x_{0},t_{0})=0$  and using Lemma \ref{l3.4}, we obtain $\Psi_{\beta}(x_{0},t_{0})\geq 0$.
Furthermore we see from $\beta=(\frac{\partial h}{\partial p_{1}}, \frac{\partial h}{\partial p_{2}}, \cdots,\frac{\partial h}{\partial p_{n}})$ that
$$\frac{\partial h}{\partial\beta}=\langle Dh(Du),\beta\rangle=\Sigma_{k,l}\frac{\partial h}{\partial p_{k}}u_{kl}\beta^{l}
=\Sigma_{k,l}\beta^{k}u_{kl}\beta^{l}=u_{\beta\beta}.$$
Then we show that
$$|u_{\beta\beta}|=|\frac{\partial h}{\partial\beta}|\leq C_{2}.$$
\end{proof}
We are now in a position to obtain  the bound of double tangential derivative on the boundary.
\begin{lemma}\label{l3.7}
There exists a constant $C_{3}>0$ depending only on $u_{0}$, $h$, $\tilde{h}$, $\Omega$,  $\lambda$, $\Lambda$,  such that
 $$\max_{\partial\Omega\times[0,T]}\max_{|\tau|=1, \langle\tau,\nu\rangle=0} u_{\tau\tau}\leq C_{3}.$$
\end{lemma}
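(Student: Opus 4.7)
The plan is to adapt the Lions--Trudinger--Urbas barrier method used in \cite{JU}, \cite{HR} and \cite{OK} to the present setting. Set $M:=\max_{\partial\Omega\times[0,T]}\max_{|\tau|=1,\langle\tau,\nu\rangle=0}u_{\tau\tau}$, and assume for the argument that $M$ is attained at some $(x_0,t_0)$ with $t_0>0$ and tangent direction $\tau_0$ (the case $t_0=0$ reduces immediately to the initial data bound). After a rotation with $\nu(x_0)=e_n$ and $\tau_0=e_1$, I would extend $\tau_0$ to a smooth unit vector field $\hat\tau(x)$ that is tangent to $\partial\Omega$ in a boundary neighborhood of $x_0$, and then to a smooth $\mathbb{R}^n$-valued extension on $\bar\Omega\cap B_\delta(x_0)$. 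The central observation is that $w(x,t):=\hat\tau^i(x)\hat\tau^j(x)u_{ij}(x,t)$ satisfies $w(x_0,t_0)=M$ and $w\le M$ on $\partial\Omega\cap B_\delta(x_0)\times[0,T]$.

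First I would exploit the boundary identity $h(Du(x,t))=0$ on $\partial\Omega$. Applying $\hat\tau$ twice tangentially at $(x_0,t_0)$ and combining with the strict concavity $D^2h\leq-\theta I$ together with Lemmas \ref{l3.4} and \ref{l3.6}, one arrives at an estimate of the schematic form
$$h_{p_k}(Du)\,\partial_k w(x_0,t_0)\;\leq\;-\theta M^2+C(1+M),$$
the quadratic gain coming from the diagonal contribution $u_{11}(x_0,t_0)^2=M^2$ in $|u_{k\hat\tau}|^2$.

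Next I would set up a barrier
$$\Psi(x,t)=M-w(x,t)+C_0\tilde h(x)+A|x-x_0|^2$$
on $\Omega_\delta\times[0,T]$ with $\Omega_\delta=\Omega\cap B_\delta(x_0)$. By maximality of $M$ and $|\hat\tau|=1$, $\Psi\geq 0$ on $\partial\Omega\cap B_\delta(x_0)\times[0,T]$; on the remaining faces of the parabolic boundary, $\Psi\geq 0$ is arranged by choosing $A$ large, using Lemma \ref{l3.5} (interior controlled by boundary) and the initial data. In the interior the crucial point is $L\Psi\leq 0$: the piece $C_0L\tilde h\leq-C_0\tilde\theta\sum F^{ii}$ by concavity of $\tilde h$; $AL|x-x_0|^2=2A\sum F^{ii}$; the main piece $\hat\tau^i\hat\tau^j Lu_{ij}=-F^{pq,rs}(\hat\tau^i u_{pqi})(\hat\tau^j u_{rsj})\geq 0$ by concavity of $F$; and the commutator terms $F^{pq}(\hat\tau^i\hat\tau^j)_{pq}u_{ij}$ and $F^{pq}(\hat\tau^i\hat\tau^j)_p u_{ijq}$ are absorbed via Cauchy's inequality in the spirit of the estimate $|2h_{p_kp_l}F^{ij}\nu_{kj}u_{li}|\leq C$ appearing in the proof of Lemma \ref{l3.4}, using both (\ref{e1.16}) and (\ref{e1.17}). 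For $C_0\gg A\gg 1$ depending only on the allowed data, $L\Psi\leq 0$. The parabolic maximum principle then forces $\Psi\geq 0$ on $\Omega_\delta\times[0,T]$; since $\Psi(x_0,t_0)=0$, Hopf's lemma combined with the strict obliqueness of Lemma \ref{l3.4} yields $\partial_\beta\Psi(x_0,t_0)\geq 0$, i.e., a lower bound on $h_{p_k}(Du)\partial_k w(x_0,t_0)$. Comparing with the upper bound from Step~1 gives $-\theta M^2+C(1+M)\geq -C'$, whence $M\leq C_3$.

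The principal technical obstacle is controlling the third-derivative commutator $F^{pq}(\hat\tau^i\hat\tau^j)_p u_{ijq}$ in $L(\hat\tau^i\hat\tau^j u_{ij})$ without destroying the sign of $L\Psi$; this is precisely where both structure conditions (\ref{e1.16}) and (\ref{e1.17}) must be invoked simultaneously, jointly with the nonnegativity $-F^{pq,rs}(\hat\tau^i u_{pqi})(\hat\tau^j u_{rsj})\geq 0$ coming from concavity of $F$, in order to bound the offending term by $C\sum F^{ii}$ that $C_0\tilde\theta\sum F^{ii}$ can absorb.
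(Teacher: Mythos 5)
Your overall strategy is the same as the paper's (a barrier argument combined with a second tangential differentiation of the boundary condition $h(Du)=0$, exploiting the strict concavity of $h$ to generate a term quadratic in $M$ that is then played off against a linear bound from the barrier). However, your implementation uses a varying unit tangent vector field $\hat\tau$ and the function $w=\hat\tau^i\hat\tau^j u_{ij}$, which creates a genuine gap that the paper's proof deliberately avoids: when you compute $L(\hat\tau^i\hat\tau^j u_{ij})$, the commutator $F^{pq}(\hat\tau^i\hat\tau^j)_p u_{ijq}$ involves \emph{third} derivatives of $u$, and these are \emph{not} controlled by the structure conditions (\ref{e1.16})--(\ref{e1.17}), which only bound $\sum F^{ii}$ and $\sum F^{ii}\lambda_i^2$, i.e.\ quantities built from first derivatives of $F$ and second derivatives of $u$. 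The Cauchy argument you invoke (in the spirit of $|2h_{p_kp_l}F^{ij}\nu_{kj}u_{li}|\leq C$ in the proof of Lemma~\ref{l3.4}) bounds terms of the form $F^{pq}\cdot(D^2u)$, not $F^{pq}\cdot(D^3u)$; and the ``good'' term $-F^{pq,rs}(\hat\tau^i u_{pqi})(\hat\tau^j u_{rsj})\geq 0$ coming from concavity is merely nonnegative, not coercive, so it cannot absorb a third-derivative cross term by weighted Cauchy under the hypotheses of the theorem. The paper avoids this entirely: it fixes the constant direction $e_1$ (so $Lu_{11}\geq 0$ cleanly with no commutators), and instead handles the mismatch between the fixed $e_1$ and the tangent direction via the algebraic decomposition $\xi=\tau(\xi)+\frac{\langle\nu,\xi\rangle}{\langle\beta,\nu\rangle}\beta$ and (\ref{e3.19})--(\ref{e3.20}), placing the resulting correction $-2\langle\nu,e_1\rangle\langle\beta^T,e_1\rangle/\langle\beta,\nu\rangle + C\langle\nu,e_1\rangle^2$ into the barrier as a smooth function of $(x,Du)$ controlled by Lemma~\ref{l3.0}.

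Two smaller remarks. First, the signs in your two inequalities are both reversed: the concavity $D^2h\leq-\theta I$ gives $h_{p_k}\partial_k w(x_0,t_0)=-h_{p_kp_l}u_{\hat\tau k}u_{\hat\tau l}+\text{l.o.t.}\geq\theta M^2-C(1+M)$, a \emph{lower} bound, while the barrier $\Psi=M-w+C_0\tilde h+A|x-x_0|^2$ with $\partial_\beta\Psi(x_0,t_0)\geq 0$ yields an \emph{upper} bound $\partial_\beta w(x_0,t_0)\leq C_0\langle\beta,\nu\rangle$; you state both with the opposite orientation. They cancel, so the final inequality $M\leq C_3$ still follows, but as written the intermediate inequalities are false. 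Second, your barrier is not normalized by $M$ as in the paper, so the constants $A$ and $C_0$ end up $O(M)$ rather than $O(1)$; this is harmless (it only changes the upper bound to $\partial_\beta w\lesssim M$, still beaten by $\theta M^2$) but is worth tracking explicitly, whereas the paper's division by $u_{11}(x_0,t_0)$ keeps all barrier constants uniform.
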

\begin{proof}
Assume that $x_{0}\in\partial\Omega$, $t_{0}\in[0,T]$  and $\nu(x_{0})=e_{n}$ is the inner unit normal of $\partial\Omega$
at $x_{0}$. We can also choose the coordinates such that
$$\max_{\partial\Omega\times[0,T]}\max_{|\tau|=1, \langle\tau,\nu\rangle=0} u_{\tau\tau}=u_{11}(x_{0},t_{0}).$$
For any $x\in\partial\Omega$, combining (\ref{e3.19}) with (\ref{e3.20}), we have
\begin{equation*}
\begin{aligned}
u_{\xi\xi}&=|\tau(\xi)|^{2}u_{\tau\tau}+\frac{\langle\nu,\xi\rangle^{2}}{\langle\beta,\nu\rangle^{2}}u_{\beta\beta}\\
&\leq (1+C\langle\nu,\xi\rangle^{2}-2\langle\nu,\xi\rangle\frac{\langle\beta^{T},\xi\rangle}{\langle\beta,\nu\rangle})u_{\tau\tau}
+\frac{\langle\nu,\xi\rangle^{2}}{\langle\beta,\nu\rangle^{2}}
u_{\beta\beta}\\
&\leq(1+C\langle\nu,\xi\rangle^{2}-2\langle\nu,\xi\rangle\frac{\langle\beta^{T},\xi\rangle}{\langle\beta,\nu\rangle})u_{11}(x_{0},t_{0})
+\frac{\langle\nu,\xi\rangle^{2}}{\langle\beta,\nu\rangle^{2}}u_{\beta\beta}.
\end{aligned}
\end{equation*}
Without loss of generality, we assume that $u_{11}(x_{0},t_{0})\geq 1$. Then by Lemma \ref{l3.4} and Lemma \ref{l3.6},
 we get
\begin{equation*}
\frac{u_{\xi\xi}}{u_{11}(x_{0},t_{0})}+2\langle\nu,\xi\rangle\frac{\langle\beta^{T},\xi\rangle}{\langle\beta,\nu\rangle}\leq 1+C\langle\nu,\xi\rangle^{2}.
\end{equation*}
Let  $\xi=e_{1}$, then we have
\begin{equation*}
\frac{u_{11}}{u_{11}(x_{0},t_{0})}+2\langle\nu,e_{1}\rangle\frac{\langle\beta^{T},e_{1}\rangle}{\langle\beta,\nu\rangle}\leq 1+C\langle\nu,e_{1}\rangle^{2}.
\end{equation*}
We see that the function
\begin{equation*}
w\triangleq A|x-x_{0}|^{2}-\frac{u_{11}}{u_{11}(x_{0},t_{0})}-2\langle\nu,e_{1}\rangle\frac{\langle\beta^{T},e_{1}\rangle}{\langle\beta,\nu\rangle}
+C\langle\nu,e_{1}\rangle^{2}+1
\end{equation*}
satisfies
$$w|_{\partial\Omega\times[0,T]}\geq 0, \quad w(x_{0},t_{0})=0.$$
As before, by (\ref{e3.170}),  we can select the constant $A$  such that
$$w|_{(\partial B_{\delta}(x_{0})\cap\Omega)\times[0,T]}\geq 0.$$
Consider
$$-2\langle\nu,e_{1}\rangle\frac{\langle\beta^{T},e_{1}\rangle}{\langle\beta,\nu\rangle}+C\langle\nu,e_{1}\rangle^{2}+1$$
as a known function depending on $x$ and $Du$. Then by Lemma \ref{l3.0}, we obtain
$$|L(-2\langle\nu,e_{1}\rangle\frac{\langle\beta^{T},e_{1}\rangle}{\langle\beta,\nu\rangle}
+C\langle\nu,e_{1}\rangle^{2}+1)|\leq C\sum F^{ii}.$$
Combining the above inequality with the proof of Lemma \ref{l3.5},  we have
$$Lw\leq C\sum F^{ii}.$$
As in the proof of Lemma \ref{l3.6}, we consider the function
$$\Upsilon\triangleq  w+C_{0}\tilde{h}.$$
Using the standard barrier argument, we show that $$\Upsilon_{\beta}(x_{0},t_{0})\geq 0.$$
 A direct computation deduce that
\begin{equation}\label{e3.22}
u_{11\beta}\leq Cu_{11}(x_{0},t_{0}).
\end{equation}
On the other hand, differentiating $h(Du)$ twice in the direction $e_{1}$ at $(x_{0},t_{0}),$
 we have

$$h_{p_{k}}u_{k11}+h_{p_{k}p_{l}}u_{k1}u_{l1}=0.$$
The concavity of $h$ yields
$$h_{p_{k}}u_{k11}=-h_{p_{k}p_{l}}u_{k1}u_{l1}\geq \tilde{C}u_{11}(x_{0},t_{0})^{2}.$$
Combining the above inequality with $h_{p_{k}}u_{k11}=u_{11\beta}$, and using (\ref{e3.22}), we obtain
\begin{equation*}\label{e3.23}
\tilde{C}u_{11}(x_{0},t_{0})^{2}\leq C u_{11}(x_{0},t_{0}).
\end{equation*}
Then we get the upper bound estiamtes of $u_{11}(x_{0},t_{0})$ and the conclusion follows.
\end{proof}
Using Lemma {\ref{l3.6}, {\ref{l3.7}, and (\ref{e3.20}), we obtain the $C^{2}$ a priori bound for the solution on the  boundary:
\begin{lemma}\label{l3.8}
There exists a constant $C_{4}>0$ depending on $h$, $\tilde{h}$, $u_{0}$,  $\lambda$, $\Lambda$, and $\Omega$, such that
\begin{equation*}\label{e3.24}
\sup_{\partial\Omega_{T}}|D^{2}u|\leq C_{4}.
\end{equation*}
\end{lemma}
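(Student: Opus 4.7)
The plan is to view this lemma as a direct consequence of the three previous boundary estimates: the strict obliqueness bound of Lemma \ref{l3.4}, the double-normal bound of Lemma \ref{l3.6}, and the double-tangential bound of Lemma \ref{l3.7}. Since $u$ is convex, controlling $u_{\xi\xi}$ from above for every unit vector $\xi$ at every boundary point is enough to bound $|D^{2}u|$ on $\partial\Omega\times[0,T]$.

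First, I would fix an arbitrary point $(x_{0},t_{0})\in\partial\Omega\times[0,T]$ and an arbitrary unit vector $\xi\in\mathbb{R}^{n}$. Writing $\xi=\tau(\xi)+\frac{\langle\nu,\xi\rangle}{\langle\beta,\nu\rangle}\beta$ exactly as was done to derive (\ref{e3.19}) and (\ref{e3.20}), the identity (\ref{e3.17}) kills the mixed $u_{\beta\tau}$ term on the boundary, and I obtain
\begin{equation*}
u_{\xi\xi}=|\tau(\xi)|^{2}\,u_{\tau\tau}+\frac{\langle\nu,\xi\rangle^{2}}{\langle\beta,\nu\rangle^{2}}\,u_{\beta\beta},
\end{equation*}
where $\tau=\tau(\xi)/|\tau(\xi)|$ is a unit tangent vector to $\partial\Omega$. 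Now (\ref{e3.19}) gives $|\tau(\xi)|^{2}\leq C$ in terms of the obliqueness lower bound, and Lemma \ref{l3.4} bounds $\langle\beta,\nu\rangle^{-2}\leq C_{1}^{2}$. Consequently
\begin{equation*}
u_{\xi\xi}\leq C\bigl(u_{\tau\tau}+u_{\beta\beta}\bigr).
\end{equation*}

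Next I would invoke Lemma \ref{l3.6} to bound $u_{\beta\beta}(x_{0},t_{0})\leq C_{2}$ and Lemma \ref{l3.7} to bound $u_{\tau\tau}(x_{0},t_{0})\leq C_{3}$, where both constants depend only on $h$, $\tilde{h}$, $u_{0}$, $\Omega$, $\lambda$ and $\Lambda$. Combining with the previous display gives
\begin{equation*}
u_{\xi\xi}(x_{0},t_{0})\leq C(C_{2}+C_{3})\eqqcolon C_{4}.
\end{equation*}
Convexity of $u(\cdot,t)$ implies $u_{\xi\xi}\geq 0$, so in fact $|u_{\xi\xi}(x_{0},t_{0})|\leq C_{4}$. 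Taking the supremum over unit $\xi$ and over $(x_{0},t_{0})\in\partial\Omega\times[0,T]$ yields $\sup_{\partial\Omega_{T}}|D^{2}u|\leq C_{4}$, as required.

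There is no genuine obstacle here: all the technical work has already been absorbed into Lemmas \ref{l3.4}--\ref{l3.7}, and the present lemma is essentially a bookkeeping step that packages the tangential and normal-direction bounds, together with the strict obliqueness, into a uniform bound on the full Hessian at the boundary. The only minor point to check is the unification of the constant, which is immediate from the shared dependence of $C_{1}$, $C_{2}$, $C_{3}$ on the same data $h$, $\tilde{h}$, $u_{0}$, $\Omega$, $\lambda$, $\Lambda$.
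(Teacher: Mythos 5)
Your argument is exactly what the paper does: Lemma \ref{l3.8} is stated as an immediate consequence of Lemmas \ref{l3.6} and \ref{l3.7} together with the decomposition (\ref{e3.20}), and you have simply written out the bookkeeping that the paper leaves implicit. Correct and same approach.
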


Combining Lemma \ref{l3.5} with Lemma \ref{l3.8}, we obtain the following result:
\begin{lemma}\label{l3.9}
There exists a constant $C_{5}>0$ depending on $h$, $\tilde{h}$, $u_{0}$, $\Omega$,  $\lambda$, $\Lambda$ such that
\begin{equation*}\label{e3.25}
\sup_{\bar{\Omega}_{T},|\xi|=1}D_{ij}u\xi_{i}\xi_{j}\leq C_{5}.
\end{equation*}
\end{lemma}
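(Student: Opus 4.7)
The plan is to combine Lemma \ref{l3.5} (the interior second derivative bound reducing to boundary data plus initial data) with Lemma \ref{l3.8} (the boundary second derivative bound), observing that the supremum of the quadratic form $D_{ij}u\,\xi_i\xi_j$ over unit $\xi$ is precisely the largest eigenvalue of $D^2u$, which by convexity of $u$ coincides with $|D^2u|$ up to a dimensional constant.

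First I would note that, because $u(\cdot,t)$ is strictly convex for every $t\in[0,T]$, all eigenvalues of $D^2u$ are nonnegative, so $\sup_{|\xi|=1}D_{ij}u\,\xi_i\xi_j=\lambda_{\max}(D^2u)\leq|D^2u|$, where $|\cdot|$ is the matrix norm used in the preceding lemmas. Next, Lemma \ref{l3.8} gives
\begin{equation*}
\sup_{\partial\Omega_T}|D^2u|\leq C_4,
\end{equation*}
where $C_4$ depends only on the admissible data $h,\tilde h,u_0,\Omega,\lambda,\Lambda$. Inserting this into Lemma \ref{l3.5}, which for each $t\in[0,T]$ propagates the boundary bound into the interior via the concavity of $F$ (hence subharmonicity of $u_{\xi\xi}$ under the linearized operator) and the convexity of $u$, yields
\begin{equation*}
\sup_{\bar\Omega_T}|D^2u|\leq C_4+\max_{\bar\Omega}|D^2u_0|.
\end{equation*}
Setting $C_5:=C_4+\max_{\bar\Omega}|D^2u_0|$ then gives the desired estimate, since
\begin{equation*}
\sup_{\bar\Omega_T,\,|\xi|=1}D_{ij}u\,\xi_i\xi_j\leq\sup_{\bar\Omega_T}|D^2u|\leq C_5.
\end{equation*}

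There is no real obstacle here: the statement is a direct corollary of the two preceding lemmas, and the only point worth spelling out is the (immediate) identification of the supremum of the quadratic form with the largest eigenvalue, which in the convex setting is nonnegative and bounded by the matrix norm that was estimated. The genuine work is concentrated in Lemma \ref{l3.8} (boundary $C^2$), which in turn relied on the strict obliqueness of Lemma \ref{l3.4}, the double normal estimate of Lemma \ref{l3.6}, and the double tangential estimate of Lemma \ref{l3.7}; once those are in place the present lemma is immediate.
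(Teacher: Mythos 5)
Your argument coincides with the paper's: the lemma is obtained precisely by inserting the boundary estimate of Lemma \ref{l3.8} into the interior reduction of Lemma \ref{l3.5}, and the dependence of $C_5$ on $u_0$ absorbs the $\max_{\bar\Omega}|D^2u_0|$ term. The identification of $\sup_{|\xi|=1}D_{ij}u\,\xi_i\xi_j$ with $\lambda_{\max}(D^2u)\leq|D^2u|$ is immediate by convexity, exactly as you note.
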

By the Legendre transformation of $u$, using (\ref{e3.12}) and repeating the proof of the above lemmas, we get
the  following statement:
\begin{lemma}\label{l3.10}
There exists a constant $C_{6}>0$ depending on $h$, $\Omega$, $\tilde{h}$, $\tilde{\Omega}$, $u_{0}$, $\lambda$, $\Lambda$ such that
\begin{equation}\label{e3.26}
\frac{1}{C_{6}}\leq\inf_{\bar{\Omega}_{T},|\xi|=1}D_{ij}u\xi_{i}\xi_{j}
\leq\sup_{\bar{\Omega}_{T},|\xi|=1}D_{ij}u\xi_{i}\xi_{j}\leq C_{6}.
\end{equation}
\end{lemma}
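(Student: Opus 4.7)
The upper estimate in (\ref{e3.26}) is precisely Lemma \ref{l3.9}, so the task reduces to producing a uniform positive lower bound for the eigenvalues of $D^{2}u$. My plan is to exploit the symmetry of the flow under the Legendre transform, as the statement suggests. The Legendre dual $u^{*}(y,t)=\langle x,y\rangle-u(x,t)$ (with $y=Du(x,t)$) has Hessian $D^{2}u^{*}=(D^{2}u)^{-1}$, so any upper bound for $D^{2}u^{*}$ on $\bar{\tilde\Omega}_{T}$ is precisely a uniform positive lower bound on the eigenvalues of $D^{2}u$ on $\bar\Omega_{T}$.

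The dual $u^{*}$ solves the parabolic system (\ref{e3.12}) on $\tilde\Omega_{T}$, with strictly concave defining function $\tilde h$ for $\Omega$ supplying the oblique boundary condition $\tilde h(Du^{*})=0$, and with initial datum $u_{0}^{*}$ which inherits uniform convexity and $C^{2+\alpha_{0}}$ regularity from $u_{0}$. The hypothesis package is invariant under $(F,h,\Omega)\leftrightarrow(F^{*},\tilde h,\tilde\Omega)$: the operator $F^{*}(A)=-F(A^{-1})$ is monotone, it satisfies (\ref{e1.4aa}) and (\ref{e1.15}) by its very definition, while the identities
\[
\sum_{i}\frac{\partial F^{*}}{\partial \lambda_{i}}=\sum_{i}\mu_{i}^{2}\frac{\partial F}{\partial \mu_{i}},\qquad \sum_{i}\lambda_{i}^{2}\frac{\partial F^{*}}{\partial \lambda_{i}}=\sum_{i}\frac{\partial F}{\partial \mu_{i}}
\]
recorded in the proof of Lemma \ref{l3.4} show that (\ref{e1.16}) and (\ref{e1.17}) simply swap roles for $F^{*}$ and continue to hold with the same universal constants $\lambda,\Lambda$.

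With that framework in place, I would reproduce the sequence Lemmas \ref{l5.1a}--\ref{l3.9} verbatim with $(F,h,\Omega,u,\nu,\beta)$ replaced throughout by $(F^{*},\tilde h,\tilde\Omega,u^{*},\tilde\nu,\tilde\beta)$. This yields, in turn, the $\partial_{t}u^{*}$-bound and the confinement of the eigenvalues of $D^{2}u^{*}$ to some cone $\Gamma^{+}_{]\mu_{1}^{*},\mu_{2}^{*}[}$, strict obliqueness $\langle\tilde\beta,\tilde\nu\rangle\geq 1/C_{1}^{*}$ on $\partial\tilde\Omega$, and the barrier-based boundary Hessian estimates for $u^{*}_{\tilde\beta\tilde\beta}$ and $u^{*}_{\tilde\tau\tilde\tau}$; the interior maximum-principle step then produces a constant $C'>0$ with
\[
\sup_{\bar{\tilde\Omega}_{T},|\eta|=1}D_{ij}u^{*}\,\eta_{i}\eta_{j}\leq C'.
\]
Passing back through $D^{2}u^{*}=(D^{2}u)^{-1}$ gives $\inf_{\bar\Omega_{T},|\xi|=1}D_{ij}u\,\xi_{i}\xi_{j}\geq 1/C'$, and setting $C_{6}:=\max(C_{5},C')$ delivers (\ref{e3.26}).

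The argument is essentially a bookkeeping exchange, not a new analytic input; the only point demanding care is checking that the barrier constructions and Hopf-lemma applications of Lemmas \ref{l3.4}--\ref{l3.8} continue to engage the structural conditions (\ref{e1.16})--(\ref{e1.17}) after dualization. In particular, the pointwise bound (\ref{e3.15}) on the linearized operator and the diagonal-basis estimates of $\sum F^{ii}\lambda_{i}^{2}$ must be recast for $F^{*}$ with $\mu_{i}=\lambda_{i}^{-1}$, and this is exactly what the two identities displayed above guarantee.
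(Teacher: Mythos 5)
Your argument is exactly the one the paper uses: the upper bound is Lemma \ref{l3.9}, and the lower bound is obtained by passing to the Legendre dual $u^{*}$ which solves (\ref{e3.12}) and repeating Lemmas \ref{l5.1a}--\ref{l3.9} for $(F^{*},\tilde{h},\tilde{\Omega})$, using the identities from the proof of Lemma \ref{l3.4} to verify that the structure conditions (\ref{e1.16})--(\ref{e1.17}) carry over to $F^{*}$. This matches the paper's one-line proof ("By the Legendre transformation of $u$, using (\ref{e3.12}) and repeating the proof of the above lemmas"), with the same route and the same key checks made explicit.
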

\begin{rem}\label{r3.9}
The differential inequality (\ref{e3.15}) plays a central role
in the $C^{2}$ estimates for the solution on the boundary. The conditions (\ref{e1.16}) and (\ref{e1.17}) imposed on $F$  is used to
prove (\ref{e3.15}). If we assume (\ref{e3.15}), (\ref{e1.4aa}) and (\ref{e1.15}), we can also prove the above lemma instead of
(\ref{e1.16}) and (\ref{e1.17}).
\end{rem}

\section{Longtime existence and convergence}
{\bf Proof of Theorem \ref{t1.1}:}

We divide the proof of the Theorem into two parts.\\

Part 1: The long time existence. \\

By Lemma \ref{l5.1a} and Lemma \ref{l3.9},
we know global $C^{2,1}$ estimates for the solutions of the flow \eqref{e2.1}.
Using Theorem 14.22 in Lieberman \cite{GM}, we show that the solutions of the oblique derivative problem \eqref{e2.1}
have global  $C^{2+\alpha,1+\frac{\alpha}{2}}$ estimates.

Now let $u_{0}$  be  a  $C^{2+\alpha_{0}}$ strictly convex function as in the conditions of Theorem \ref{t1.1}.
We assume that $T$ is the maximal time such that the solution to the  flow (\ref{e2.1}) exists. Suppose that $T<+\infty$.
Combining Proposition \ref{p1.1} with Lemma \ref{l3.10} and using Theorem 14.23  in \cite{GM} ,
there exists $ u\in C^{2+\alpha,1+\frac{\alpha}{2}}(\bar{\Omega}_{T})$  which satisfies (\ref{e2.1}) and
 $$\|u\|_{C^{2+\alpha,1+\frac{\alpha}{2}}(\bar{\Omega}_{T})}<+\infty.$$
 Then we can extend the flow (\ref{e2.1}) beyond the maximal time $T$.
 So that we deduce that $T=+\infty.$ Then there  exists the solution $u(x,t)$  for all times $t>0$ to (\ref{e1.1})-(\ref{e1.3}).\\

 Part 2: The convergence. \\

Using the boundary condition, we have
\begin{equation}\label{e4.2099}
|Du|\leq C_{7} ,
\end{equation}
where $C_{7}$ be a constant depending on $\Omega$ and $\tilde{\Omega}$. By intermediate Schauder estimates for parabolic
equations (cf. Lemma 14.6  and Proposition 4.25 in \cite{GM}), for any $D\subset\subset \Omega$, we have
\begin{equation*}\label{e4.209}
[D^{2}u]_{\alpha,\frac{\alpha}{2}, D_{T}}\leq C\sup|D^{2}u|\leq C_{8},
\end{equation*}
\begin{equation}\label{e4.209}
\begin{aligned}
&\sup_{t\geq 1}\|D^{3}u(\cdot,t)\|_{C(\bar{D})}+\sup_{t\geq 1}\|D^{4}u(\cdot,t)\|_{C(\bar{D})}\\
&+\sup_{x_{i}\in D, t_{i}\geq 1}\frac{|D^{4}u(x_{1}, t_{1})-D^{4}u(x_{2}, t_{2})}{\max\{|x_{1}-x_{2}|^{\alpha},|t_{1}-t_{2}|^{\frac{\alpha}{2}}\}}\\
&\leq C_{9},
\end{aligned}
\end{equation}
where $ C_{8}$, $ C_{9}$ are  constants depending on the known data and dist$(\partial \Omega, \partial D)$.

To finish the proof of Theorem \ref{t1.1}, we use a trick that we learned from  J. Kitagawa\cite{JK} and O.C. Schn$\ddot{\text{u}}$rer\cite{OC} which proving that
some curvature flows with second boundary condition converge to translating solutions. Now fixing some positive $t_0$ and writing $$v(x,t)=u(x,t)-u(x,t+t_0),$$ 
then we have
\begin{equation}\label{e4.20}
\begin{aligned}
\dot{v}&=F(D^2u(x,t)-F(D^2u(x,t+t_0))\\
&=\int_0^1\frac{d}{ds}F(sD^2u(x,t)+(1-s)D^2u(x,t+t_0))ds\\
&=\big(\int_0^1 \nabla_{r_{ij}}F(sD^2u(x,t)+(1-s)D^2u(x,t+t_0))ds\big)v_{ij}\\
&=a^{ij}v_{ij},
\end{aligned}
\end{equation}
where we denote
\begin{equation*}
a^{ij}=\int_0^1 \nabla_{r_{ij}}F(sD^2u(x,t)+(1-s)D^2u(x,t+t_0))ds.
\end{equation*}
Since the smallest eigenvalue of $u_{ij}$ has a strictly positive lower bound and upper bound uninformly in $t$ and $x$ by \eqref{e3.26}
and $u\in C^{2+\alpha,1+\frac{\alpha}{2}}(\bar{\Omega}_{T})$,  we see that
\begin{equation*}
||u(\cdot,t)-u(\cdot,t+t_0)||_{C^2(\bar{\Omega})}\leq C\cdot t_0^{\frac{\alpha}{2}},
\end{equation*}
where $C$ is independent of $t$ by  Theorem 14.22 in Lieberman \cite{GM}.

By taking $t_0$ sufficiently small, we can ensure that the convex combination $sD^2u(\cdot,t)+(1-s)D^2u(\cdot,t+t_0)$
is close to $u(\cdot,t)$ in $C^2(\bar{\Omega})$ norm. By the uniform positive lower bound and upper bound on the eigenvalues of $u_{ij}$
in Lemma \ref{l3.10},
 there exist positive constants $\tilde{\lambda}$, $\tilde{\Lambda}$ such that
$$\tilde{\lambda}I\leq[a^{ij}]\leq \tilde{\Lambda}I$$
 and hence the equation (\ref{e4.20}) is uniformly parabolic.

Additionally, we see that for $x\in\partial \Omega$, $v$ satisfies
\begin{equation}\label{e4.200}
\begin{aligned}
0&=h(Du(x,t))-h(Du(x, t+t_0))\\
&=(\int_0^1h_{p_k}(x,sDu(x,t)+(1-s)Du(x,t+t_0))ds)v_k\\
&=:\alpha^kv_k.
\end{aligned}
\end{equation}
By Lemma 3.4, we  have  $h_{p_k}(x,Du(x,t))\nu_k\geq C>0$ for some $C$ uniform in $t$ and $x$.
Therefore as in the above, by choosing $t_0$ small enough, we can deduce that $\alpha^k\nu_k\geq \frac{C}{2}>0$
and we see that $v$ satisfies a linear, uniformly oblique boundary condition.

By the proof of section 6.2 in \cite{OC},
the uniformly parabolic equation (\ref{e4.20}) with the uniformly oblique boundary condition (\ref{e4.200}) ensures that
we can obtain a translating solution of the same regularity as $u$ : $u^{\infty}(x,t)=u^\infty(x)+C_{\infty}\cdot t$
for some constant $C_{\infty}$ which satisfies equation \eqref{e1.1} and $u^{\infty}(x,t)$ satisfies
\begin{equation}\label{e4.4}
\lim_{t\rightarrow+\infty}\|u(\cdot,t)-u^{\infty}(\cdot,t)\|_{C(\bar{\Omega})}=0.
\end{equation}
Using (\ref{e4.4}) and the interpolation inequalities of the following form: (cf. \cite{OC})
\begin{equation}\label{e4.5}
\begin{aligned}
&\|D(u(\cdot,t)-u^{\infty}(\cdot,t))\|^{2}_{C(\bar{\Omega})}\\
&\leq c(\Omega)\|u(\cdot,t)-u^{\infty}(\cdot,t)\|_{C(\bar{\Omega})}(\|D^{2}(u(\cdot,t)-u^{\infty}(\cdot,t))\|_{C(\bar{\Omega})}\\
&+\|D(u(\cdot,t)-u^{\infty}(\cdot,t))\|_{C(\bar{\Omega})})\\
&\leq c(\Omega)\|u(\cdot,t)-u^{\infty}(\cdot,t)\|_{C(\bar{\Omega})}(2n^{2}C_{6}+2C_{7}),
\end{aligned}
\end{equation}
 we obtain
\begin{equation}\label{e4.6}
\lim_{t\rightarrow+\infty}\|u(\cdot,t)-u^{\infty}(\cdot,t)\|_{C^{1}(\bar{\Omega})}=0.
\end{equation}
By the interpolation inequalities  and (\ref{e4.209}), we get
\begin{equation*}\label{e4.7}
\begin{aligned}
&\|D^{2}(u(\cdot,t)-u^{\infty}(\cdot,t))\|^{2}_{C(\bar{D})}\\
&\leq c(D)\|D(u(\cdot,t)-u^{\infty}(\cdot,t))\|_{C(\bar{D})}(\|D^{3}(u(\cdot,t)-u^{\infty}(\cdot,t))\|_{C(\bar{D})}\\
&+\|D^{2}(u(\cdot,t)-u^{\infty}(\cdot,t))\|_{C(\bar{D})})\\
&\leq c(D)\|D(u(\cdot,t)-u^{\infty}(\cdot,t))\|_{C(\bar{D})}(2C_{9}+2n^{2}C_{6}).
\end{aligned}
\end{equation*}
Using (\ref{e4.6}),  we also obtain
\begin{equation}\label{e4.8}
\lim_{t\rightarrow+\infty}\|u(\cdot,t)-u^{\infty}(\cdot,t)\|_{C^{2}(\bar{D})}=0.
\end{equation}
Repeating the above procedure and using (\ref{e4.209}),  we have
\begin{equation}\label{e4.9}
\lim_{t\rightarrow+\infty}\|u(\cdot,t)-u^{\infty}(\cdot,t)\|_{C^{4+\alpha}(\bar{D})}=0.
\end{equation}
By Lemma \ref{l3.10} and (\ref{e4.8}), we get
$$\lim_{t\rightarrow+\infty}\|u(\cdot,t)-u^{\infty}(\cdot,t)\|_{C^{1+\zeta}(\bar{\Omega})}=0.$$
Therefore by using the equation \eqref{e1.1} and letting $t\rightarrow\infty$, we obtain
 \begin{equation*}
C_{\infty} =\frac{\partial u^{\infty}(x,t)}{\partial t}= F(D^2u^{\infty}(x,t))=F(D^2u^{\infty}(x)),
 \end{equation*}
 $$0=\lim_{t\rightarrow\infty}h(Du(x,t))=\lim_{t\rightarrow\infty}h(Du^{\infty}(x,t))=h(Du^{\infty}(x)).$$
Then the proof of Theorem \ref{t1.1} is completed.
\qed
\\

{\bf Proof of Corollary \ref{r1.2}:}
As the arguments in Section 2, we know that (\ref{e1.18}) is equivalent to
\begin{equation}\label{e4.1}
\left\{ \begin{aligned}F(D^{2}u)&=C_{\infty},\,x\in \Omega, \\
h(Du)&=0,\,\,x\in\partial\Omega.
\end{aligned} \right.
\end{equation}
By Evans-Krylov theorem and interior Schauder estimates,   we have
$$u\in  C^{\infty}(\Omega).$$
 For any $l\in\{1,2,\cdots,n\},$ set $w=u_{l}$.  Then $w$ satisfies
\begin{equation*}\label{e4.2}
\left\{ \begin{aligned}F^{ij}w_{ij}&=0,\,\,x\in\Omega,\\
\beta^{k}w_{k}&=0,\,\,x\in\partial\Omega,
\end{aligned} \right.
\end{equation*}
where
$$\langle\beta, \nu\rangle\geq \frac{1}{C_{1}}>0$$
by Lemma \ref{l3.4} and $\nu=(\nu_{1},\nu_{2}, \cdots,\nu_{n})$ is the unit inward normal vector of $\partial\Omega$.
Using Theorem 6.30 in Gilbarg-Trudinger \cite{GT}, we have $w\in C^{2+\alpha}(\bar{\Omega})$. Then $$u\in  C^{3+\alpha}(\bar{\Omega}).$$
By the smoothness of $F$ and $\partial\Omega$ and  using bootstrap argument,  we obtain
$$u\in  C^{\infty}(\bar{\Omega}).$$
\qed

\section{proof of theorem \ref{t1.2} and corollary \ref{c1.4}}

Let us consider the case $\tau=\frac{\pi}{4}$. Then we write
\begin{equation}\label{e5.1}
F(\lambda_{1},\lambda_{2},\cdots, \lambda_{n}):=-F_{\frac{\pi}{4}}(\lambda_{1},\lambda_{2},\cdots, \lambda_{n})=-\sum_{i}\frac{1}{1+\lambda_{i}}.
\end{equation}
The proof of Theorem \ref{t1.2} is based on the following conclusions.
\begin{lemma}\label{l5.0}\quad
\\
$F$ and $F^{*}$ are concave on the cone $\Gamma_{+}$.
\end{lemma}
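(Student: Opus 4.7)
My plan is to rewrite the sum in matrix form: since $\sum_i\frac{1}{1+\lambda_i}=\mathrm{tr}\bigl((I+A)^{-1}\bigr)$ for $A\in\Gamma_+$, we have
\begin{equation*}
F(A)=-\mathrm{tr}\bigl((I+A)^{-1}\bigr),\qquad A\in\Gamma_+,
\end{equation*}
to which matrix calculus applies directly. Concavity of $F$ will follow from a second-variation computation, and the concavity of $F^*$ will drop out from a clean algebraic identity that reduces it to the concavity of $F$.

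For the first half, I would differentiate twice using the standard identity $D[(I+A)^{-1}]\cdot B=-(I+A)^{-1}B(I+A)^{-1}$. Applying this and the product rule yields, for every symmetric $B$,
\begin{equation*}
D^2F(A)[B,B]=-2\,\mathrm{tr}\bigl((I+A)^{-1}B(I+A)^{-1}B(I+A)^{-1}\bigr).
\end{equation*}
Let $P=(I+A)^{-1/2}$, the symmetric positive definite square root (which exists on $\Gamma_+$), and set $M=PBP$, which is symmetric. Cyclic rearrangement of the trace turns the right-hand side into $-2\,\mathrm{tr}(MP^2M)=-2\|MP\|_F^2\leq 0$, where $\|\cdot\|_F$ is the Frobenius norm. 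Hence $D^2F\leq 0$ and $F$ is concave on $\Gamma_+$.

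For $F^*$, the key observation is the elementary operator identity
\begin{equation*}
(I+A^{-1})^{-1}=A(I+A)^{-1}=I-(I+A)^{-1},\qquad A\in\Gamma_+,
\end{equation*}
which one verifies by multiplying out (using that $A$ and $I$ commute). Taking traces gives $\mathrm{tr}\bigl((I+A^{-1})^{-1}\bigr)=n-\mathrm{tr}\bigl((I+A)^{-1}\bigr)$, so
\begin{equation*}
F^*(A)=-F(A^{-1})=F(A)+n.
\end{equation*}
Since $F^*$ differs from $F$ by a constant, its concavity is an immediate consequence of the concavity of $F$ established above.

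The only nontrivial step is the sign check in the cyclic identity $\mathrm{tr}(P^2BP^2BP^2)=\|MP\|_F^2$; everything else is routine. An alternative route would be to verify concavity of the scalar functions $\lambda\mapsto-\tfrac{1}{1+\lambda}$ and $\lambda\mapsto\tfrac{\lambda}{1+\lambda}$ on $(0,\infty)$ and then invoke the classical theorem of Davis that a symmetric concave function of the eigenvalues induces a concave function on symmetric matrices, but the direct computation above is shorter and self-contained.
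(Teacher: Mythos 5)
Your proposal is correct, and it takes a genuinely different route from the paper. The paper's proof is purely eigenvalue-level: it differentiates $F(\lambda)=-\sum_i(1+\lambda_i)^{-1}$ and $F^*(\lambda)$ twice in the $\lambda_i$ variables, finds that both Hessians equal $-\mathrm{diag}\bigl(2/(1+\lambda_i)^3\bigr)\le 0$, and stops there. That argument is morally the ``sum of concave scalar functions of eigenvalues'' route you mention in your final paragraph, but the paper leaves implicit the standard fact (your ``theorem of Davis'' for separable spectral functions, i.e.\ that $A\mapsto\mathrm{tr}\,f(A)$ is concave whenever $f$ is concave) needed to pass from concavity in $(\lambda_1,\dots,\lambda_n)$ to concavity on the matrix cone. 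Your direct matrix-calculus computation $D^2F(A)[B,B]=-2\,\mathrm{tr}\bigl((I+A)^{-1}B(I+A)^{-1}B(I+A)^{-1}\bigr)=-2\|PBP\cdot P\|_F^2\le 0$ sidesteps that gap entirely and is therefore somewhat more self-contained. Your second observation--the operator identity $(I+A^{-1})^{-1}=I-(I+A)^{-1}$ giving $F^*=F+n$--is especially clean: the paper implicitly exhibits the same phenomenon (both Hessians come out identical) but never names it, whereas you get the concavity of $F^*$ for free without any second computation. Both proofs are sound; yours buys rigor at the matrix level and an explicit structural identity, the paper's buys brevity by working componentwise.
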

\begin{proof}
We calculate directly to obtain:
$$\sum\frac{\partial^{2} F}{\partial\lambda_{i}\lambda_{j}}\xi_{i}\xi_{j}=-2\sum\frac{\xi^{2}_{i}}{(1+\lambda_{i})^{3}}\leq 0,$$
$$\sum\frac{\partial^{2} F^{*}}{\partial\lambda_{i}\lambda_{j}}\xi_{i}\xi_{j}=-2\sum\frac{\xi^{2}_{i}}{(1+\lambda_{i})^{3}}\leq 0.$$
Thus, the above inequalities  imply the desired result.
\end{proof}
By Lemma \ref{l5.1a}, we obtain the following statements.
\begin{lemma}[$\dot{u}$-estimates]\label{l5.1}\quad
\\
As long as the convex solution to (\ref{e1.1})-(\ref{e1.3}) exists, the following estimates hold, i.e.,
\begin{equation*}\label{e3.1}
\Theta_{0}\triangleq\min_{\bar{\Omega}}F_{\frac{\pi}{4}}(D^{2}u_{0})\leq-\dot{u}\triangleq-\frac{\partial u}{\partial t}\leq\Theta_{1}\triangleq\max_{\bar{\Omega}}F_{\frac{\pi}{4}}(D^{2}u_{0}).
\end{equation*}
\end{lemma}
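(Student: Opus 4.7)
My plan is to deduce this lemma directly from Lemma \ref{l5.1a} by specializing the general operator $F$ of Section~3 to the particular choice $F=-F_{\frac{\pi}{4}}$ introduced in \eqref{e5.1}. The first task is to verify that the flow $\partial u/\partial t=-F_{\frac{\pi}{4}}(D^{2}u)$ fits the hypotheses used in proving Lemma \ref{l5.1a}. The essential requirement is parabolicity of the linearization; diagonalizing one computes
$$\frac{\partial(-F_{\frac{\pi}{4}})}{\partial\lambda_{i}}=\frac{1}{(1+\lambda_{i})^{2}}>0\quad\text{on }\Gamma_{+},$$
so $-F_{\frac{\pi}{4}}$ is monotone in each eigenvalue. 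The strict obliqueness $\langle\beta,\nu\rangle>0$ of the boundary condition $h(Du)=0$ does not depend on the particular choice of $F$ and is supplied by Lemma \ref{l1.3} as long as $u$ remains strictly convex, which holds under the flow.

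With the hypotheses in place, Lemma \ref{l5.1a} applied to $F=-F_{\frac{\pi}{4}}$ yields
$$\min_{\bar{\Omega}}\bigl(-F_{\frac{\pi}{4}}(D^{2}u_{0})\bigr)\leq\dot{u}\leq\max_{\bar{\Omega}}\bigl(-F_{\frac{\pi}{4}}(D^{2}u_{0})\bigr).$$
Negating and reversing the inequalities produces exactly the asserted bound
$$\min_{\bar{\Omega}}F_{\frac{\pi}{4}}(D^{2}u_{0})\leq-\dot{u}\leq\max_{\bar{\Omega}}F_{\frac{\pi}{4}}(D^{2}u_{0}).$$

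For completeness I would recall the architecture of Lemma \ref{l5.1a}: differentiate the flow equation in $t$ to obtain the linear parabolic identity $\partial_{t}\dot{u}-F^{ij}\partial_{ij}\dot{u}=0$; apply the parabolic maximum principle to place the extrema of $\dot{u}$ on the parabolic boundary; and rule out an extremum on the lateral boundary by combining Hopf's lemma (whose sign conclusion uses $\langle\beta,\nu\rangle>0$) with the identity $\dot{u}_{\beta}=h_{p_{k}}(Du)\partial_{k}\dot{u}=\partial_{t}h(Du)=0$, obtained by differentiating the boundary condition in $t$. Thus the only admissible extrema lie in $\bar{\Omega}\times\{t=0\}$, which is the initial slice, giving the bound.

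I do not anticipate any genuine obstacle, since the statement reduces to a direct specialization of Lemma \ref{l5.1a}; the only content beyond invocation is the sign check ensuring that the equation with $F=-F_{\frac{\pi}{4}}$ is parabolic on $\Gamma_{+}$, which is immediate.
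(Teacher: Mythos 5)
Your proof is correct and follows the paper's own route exactly: the paper proves this lemma simply by citing Lemma \ref{l5.1a} for the choice $F=-F_{\frac{\pi}{4}}$ and flipping signs, which is precisely what you do (with the additional, helpful, sign-check that $\partial(-F_{\frac{\pi}{4}})/\partial\lambda_i=(1+\lambda_i)^{-2}>0$ on $\Gamma_+$, confirming parabolicity).
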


Since $u_{0}\in C^{2+\alpha}(\bar{\Omega})$  is strictly convex, it is clear that $0<\Theta_{0}\leq \Theta_{1}<n$.
\begin{lemma}\label{l5.2} Let $(x,t)$  be an arbitrary point of $\Omega_{T}$, and $\lambda_{1}\leq\lambda_{2}\leq\cdots\leq\lambda_{n}$
be the eigenvalues of $D^{2}u$ at $(x,t)$. As long as the convex solution to (\ref{e1.1})-(\ref{e1.3}) exists, then
\begin{equation}\label{e5.2}
0\leq\lambda_{1}\leq \frac{n}{\Theta_{0}}-1, \quad \frac{n}{\Theta_{1}}-1\leq\lambda_{n}.
\end{equation}
\end{lemma}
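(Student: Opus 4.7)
The plan is to read off the bounds on $\lambda_1$ and $\lambda_n$ directly from the two-sided pointwise estimate on $\dot u$ in Lemma \ref{l5.1}, by comparing the sum $\sum_i (1+\lambda_i)^{-1}$ to its extreme terms. All the heavy lifting has already been done: convexity of $u$ is preserved along the flow (so $\lambda_i \geq 0$), and $\dot u = F(D^2u) = -F_{\pi/4}(D^2u)$ is trapped between $-\Theta_1$ and $-\Theta_0$.

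First I would note that Lemma \ref{l5.1} rewrites as
\begin{equation*}
0 < \Theta_0 \;\leq\; \sum_{i=1}^n \frac{1}{1+\lambda_i} \;\leq\; \Theta_1 < n
\end{equation*}
at every $(x,t)\in\Omega_T$, using that $u$ remains strictly convex so each $\lambda_i\geq 0$ and hence each summand lies in $(0,1]$. The ordering $0\leq \lambda_1\leq\cdots\leq\lambda_n$ implies the elementary sandwich
\begin{equation*}
\frac{n}{1+\lambda_n} \;\leq\; \sum_{i=1}^n \frac{1}{1+\lambda_i} \;\leq\; \frac{n}{1+\lambda_1}.
\end{equation*}

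Combining the two displays yields $\Theta_0 \leq n/(1+\lambda_1)$ and $n/(1+\lambda_n) \leq \Theta_1$, i.e.\ the claimed inequalities $\lambda_1 \leq n/\Theta_0 - 1$ and $\lambda_n \geq n/\Theta_1 - 1$, while $\lambda_1 \geq 0$ is just convexity of $u(\cdot,t)$. There is essentially no obstacle here; the only point worth flagging is that one needs $\Theta_0 > 0$ (which is guaranteed by the strict convexity of $u_0$ noted just before the lemma) in order to divide and obtain a finite upper bound on $\lambda_1$.
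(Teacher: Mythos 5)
Your argument is essentially identical to the paper's: both invoke Lemma \ref{l5.1} to trap $\sum_i (1+\lambda_i)^{-1}$ between $\Theta_0$ and $\Theta_1$, then use the elementary sandwich $n/(1+\lambda_n) \leq \sum_i (1+\lambda_i)^{-1} \leq n/(1+\lambda_1)$ together with $\lambda_1 \geq 0$ from convexity. The proposal is correct and matches the paper's proof.
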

\begin{proof}
Following the definition of $F_{\frac{\pi}{4}}(D^{2}u)$ and Lemma \ref{l5.1}:
$$\Theta_{0}\leq-\dot{u}=\sum_{i}\frac{1}{1+\lambda_{i}}\leq\frac{n}{1+\lambda_{1}}.$$
Combining with the convexity of $u$, we obtain
$$1\leq 1+\lambda_{1}\leq \frac{n}{\Theta_{0}}.$$
Using the same methods, we get
$$\frac{n}{1+\lambda_{n}}\leq-\dot{u}=\sum_{i}\frac{1}{1+\lambda_{i}}\leq\Theta_{1}.$$
Then we have
$$\frac{n}{\Theta_{1}}\leq 1+\lambda_{n}.$$
From the above arguments, we prove the lemma.
\end{proof}
Now we show the operator $-F_{\frac{\pi}{4}}$ satisfying (\ref{e1.16}) and (\ref{e1.17}) which  play an important role
in the barrier arguments.
\begin{Corollary}\label{c5.3}
As long as the convex solution to (\ref{e1.1})-(\ref{e1.3}) exists, then  for any $(x,t)\in\Omega_{T}$ we have
\begin{equation}\label{e5.3}
\frac{\Theta^{2}_{0}}{n^{2}}\leq\sum_{i=1}^{n}\frac{\partial F}{\partial\lambda_{i}}\leq n,
\end{equation}
\begin{equation}\label{e5.4}
\frac{(n-\Theta_{1})^{2}}{n^{2}}\leq\sum_{i=1}^{n}\frac{\partial F}{\partial\lambda_{i}}\lambda^{2}_{i}\leq n.
\end{equation}
\end{Corollary}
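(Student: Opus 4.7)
The plan is a direct elementary computation: since $F=-\sum_i (1+\lambda_i)^{-1}$, one immediately gets
$\partial F/\partial \lambda_i = (1+\lambda_i)^{-2}$, and similarly $\lambda_i^2\,\partial F/\partial \lambda_i = \lambda_i^2/(1+\lambda_i)^2 = (1-(1+\lambda_i)^{-1})^2$. With these explicit expressions in hand, the four inequalities in \eqref{e5.3}--\eqref{e5.4} reduce to term-by-term bounds plus one application of pigeonhole, using Lemma \ref{l5.1} and convexity $\lambda_i\ge 0$.

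For the upper bounds, the convexity of $u$ gives $\lambda_i\ge 0$, so each summand $1/(1+\lambda_i)^2$ and each $(\lambda_i/(1+\lambda_i))^2$ lies in $[0,1]$; summing $n$ terms yields the upper bound $n$ in both \eqref{e5.3} and \eqref{e5.4}. These are the two easy halves.

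For the lower bound in \eqref{e5.3}, I would use Lemma \ref{l5.1} to write $\sum_i (1+\lambda_i)^{-1} = -\dot u \ge \Theta_0$. Then the pigeonhole principle guarantees an index $i_0$ with $(1+\lambda_{i_0})^{-1}\ge \Theta_0/n$, so
\[
\sum_{i=1}^n \frac{\partial F}{\partial \lambda_i} \;\ge\; \frac{1}{(1+\lambda_{i_0})^2} \;\ge\; \frac{\Theta_0^2}{n^2}.
\]
For the lower bound in \eqref{e5.4}, the identity $\lambda_i/(1+\lambda_i)=1-(1+\lambda_i)^{-1}$ together with the upper estimate $\sum_i (1+\lambda_i)^{-1}\le \Theta_1$ from Lemma \ref{l5.1} gives
\[
\sum_{i=1}^n \frac{\lambda_i}{1+\lambda_i} \;=\; n-\sum_{i=1}^n \frac{1}{1+\lambda_i} \;\ge\; n-\Theta_1,
\]
which is strictly positive since $\Theta_1<n$ (because $u_0$ is strictly convex, so $\lambda_i(u_0)>0$ and $\sum_i 1/(1+\lambda_i(u_0))<n$). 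Pigeonhole then yields an index $i_1$ with $\lambda_{i_1}/(1+\lambda_{i_1})\ge (n-\Theta_1)/n$, and squaring produces the required lower bound $(n-\Theta_1)^2/n^2$.

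There is no real obstacle here; the only point one must be careful about is recording the strict positivity $\Theta_1<n$ (so that $(n-\Theta_1)^2/n^2$ is genuinely a positive constant $\lambda$), which follows from the strict convexity assumption on $u_0$ built into Theorem \ref{t1.1}. Once \eqref{e5.3}--\eqref{e5.4} are established, they are precisely the structure conditions \eqref{e1.16}--\eqref{e1.17} with $\lambda=\min\{\Theta_0^2/n^2,(n-\Theta_1)^2/n^2\}$ and $\Lambda=n$, which is why the corollary can be quoted in Section 1 as verifying the hypotheses of Theorem \ref{t1.1} for $F=-F_{\pi/4}$.
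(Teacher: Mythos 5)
Your proof is correct and takes essentially the same route as the paper: the paper first extracts explicit bounds $1+\lambda_1\le n/\Theta_0$ and $1+\lambda_n\ge n/\Theta_1$ in Lemma~\ref{l5.2} and then bounds the sums by their single extremal term, whereas you inline that step via pigeonhole, but the underlying inequality is identical. Your remark that $\Theta_1<n$ follows from strict convexity of $u_0$ is also exactly what the paper records after Lemma~\ref{l5.1}.
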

\begin{proof}
We observe
$$\sum_{i=1}^{n}\frac{\partial F}{\partial\lambda_{i}}=\sum_{i=1}^{n}\frac{1}{(1+\lambda_{i})^{2}}.$$
By Lemma \ref{l5.2} and the convexity of $u$, we have
$$\frac{\Theta^{2}_{0}}{n^{2}}\leq\frac{1}{(1+\lambda_{1})^{2}}\leq
\sum_{i=1}^{n}\frac{1}{(1+\lambda_{i})^{2}}\leq n.$$
We further obtain
$$\frac{(n-\Theta_{1})^{2}}{n^{2}}\leq\frac{\lambda^{2}_{n}}{(1+\lambda_{n})^{2}}\leq\sum_{i=1}^{n}\frac{\lambda^{2}_{i}}{(1+\lambda_{i})^{2}}\leq n.$$
Corollary \ref{c5.3} is established.
\end{proof}
{\bf Proof of Theorem \ref{t1.2}:}

It follows from Lemma \ref{l5.0},  Corollary \ref{c5.3} and Theorem \ref{t1.1} .\qed

{\bf Proof of Corollary \ref{c1.4}:}

Let $u$ be a convex solution of (\ref{e1.13}) and we define $f=D u$. Then $f$ is a diffeomorphism from $\Omega$ to $\tilde{\Omega}$.
It follows from \cite{MW}  that  $$\Sigma=\{(x,f(x))|x\in \Omega\}$$
is a special Lagrangian graph in ($\mathbb{R}^{n}\times\mathbb{R}^{n}, g_{\frac{\pi}{4}}$). The required properties are thus proved.
\qed
\section{Applications to special lagrangian   diffeomorphism problems }
Let
\begin{equation*}\label{e6.1}
F(\lambda_{1},\lambda_{2},\cdots, \lambda_{n}):=F_{\frac{\pi}{2}}(\lambda_{1},\lambda_{2},\cdots, \lambda_{n})=\sum_{i}\arctan\lambda_{i}.
\end{equation*}
A direct computation shows that $F$ and $F^{*}$ are concave on the cone  $\Gamma_{+}$. Using Corollary 3.3 in \cite{HR} we see that $F$ satisfies (\ref{e1.16}) and (\ref{e1.17}).
By Theorem \ref{t1.1}, we reprove the existence result on the second boundary value problem for special Lagrangian equations which belongs to S.Brendle and M.Warren.
\begin{theorem}(see Theorem 1.1 in \cite{SM})\label{t6.1}
Let $\Omega$, $\tilde{\Omega}$ be bounded, uniformly convex domains with smooth boundary in $\mathbb{R}^{n}$.
  Then  there exist  $u\in C^{\infty}(\bar{\Omega})$  and the constant $c$ which satisfy
\begin{equation}\label{e6.1}
\left\{ \begin{aligned}\sum_{i}\arctan\lambda_{i}&=c,
&  x\in \Omega, \\
Du(\Omega)&=\tilde{\Omega},
\end{aligned} \right.
\end{equation}
 where  the solution is unique up to additions of constants.
\end{theorem}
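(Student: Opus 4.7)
The plan is to recognize that $F := F_{\pi/2}(\lambda_1,\ldots,\lambda_n)=\sum_i\arctan\lambda_i$ fits exactly into the framework developed in Theorem \ref{t1.1} and Corollary \ref{r1.2}, and then read off the conclusions. Concretely, I will verify each of the hypotheses \eqref{e1.4aa}, \eqref{e1.16}, \eqref{e1.17}, \eqref{e1.15}, \eqref{e1.15a}, \eqref{e1.15b} for this particular $F$, feed any smooth initial datum $u_0$ satisfying $Du_0(\Omega)=\tilde\Omega$ into the parabolic flow \eqref{e1.1}--\eqref{e1.3}, and extract the stationary boundary value problem in the long time limit.

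The symmetry and monotonicity \eqref{e1.4aa} are immediate from $\arctan$ being a smooth increasing function. The concavity condition \eqref{e1.15} is an elementary calculation: $\partial^2_{\lambda_i\lambda_j}F=-\delta_{ij}\,2\lambda_i/(1+\lambda_i^2)^2$ is nonpositive on $\Gamma_+$, and for $F^{*}(A)=-F(A^{-1})=-\sum_i\arctan(\lambda_i^{-1})$ the same pattern holds since $\arctan(\lambda^{-1})=\pi/2-\arctan\lambda$ on $(0,\infty)$ makes $F^{*}$ differ from $F$ only by an additive constant on $\Gamma_+$. The structure inequalities \eqref{e1.16} and \eqref{e1.17} on the restricted cone $\Gamma^{+}_{]\mu_1,\mu_2[}$ are exactly Corollary 3.3 of \cite{HR}, which the authors have already invoked in the preamble to the theorem. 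Finally, for the growth conditions I will take
\[
f_1(t)=f_2(t)=n\arctan t ;
\]
monotonicity of $\arctan$ gives $n\arctan\lambda_1\le F\le n\arctan\lambda_n$ at once, which is \eqref{e1.15a}. For \eqref{e1.15b}, if $\Phi\in\pounds$ then $\Phi<n\pi/2$, so $f_1(t)\le \Phi$ forces $t\le t_1:=\tan(\Phi/n)$; likewise $f_2(t)\ge\Psi$ with $\Psi\in\pounds$ (hence $\Psi>0$) forces $t\ge t_2:=\tan(\Psi/n)>0$. Thus both implications in \eqref{e1.15b} hold.

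With all hypotheses verified, pick any uniformly convex $u_0\in C^{\infty}(\bar\Omega)$ with $Du_0(\Omega)=\tilde\Omega$ (which exists because $\tilde\Omega$ is smooth and uniformly convex, e.g.\ the potential of an optimal transport map between $\Omega$ and $\tilde\Omega$ plus a large multiple of $|x|^2/2$ can be adjusted). Theorem \ref{t1.1} produces a strictly convex eternal solution $u(\cdot,t)$ of \eqref{e1.1}--\eqref{e1.3} and a translator $u^{\infty}(x,t)=u^{\infty}(x)+C_\infty t$ with $u^{\infty}(x)\in C^{1+1}(\bar\Omega)\cap C^{4+\alpha}(\Omega)$ solving
\[
F(D^{2}u^{\infty})=C_\infty\quad\text{in }\Omega,\qquad Du^{\infty}(\Omega)=\tilde\Omega.
\]
Since $F=F_{\pi/2}\in C^{\infty}(\Gamma_+)$, Corollary \ref{r1.2} upgrades $u^{\infty}$ to $C^{\infty}(\bar\Omega)$. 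Setting $c:=C_\infty$ gives a smooth solution of \eqref{e6.1}.

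Uniqueness up to additive constants is also part of Theorem \ref{t1.1}, so it transfers to \eqref{e6.1} without additional work. Overall, no single step is genuinely an obstacle: the only nontrivial verifications are the growth pair \eqref{e1.15a}--\eqref{e1.15b}, handled by the explicit choice $f_1=f_2=n\arctan t$, and the structure bounds \eqref{e1.16}--\eqref{e1.17}, imported verbatim from Corollary 3.3 of \cite{HR}. Everything else is machinery already in place.
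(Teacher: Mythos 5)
Your proposal is correct and follows the same route as the paper: verify that $F_{\pi/2}(\lambda)=\sum_i\arctan\lambda_i$ satisfies \eqref{e1.4aa}, \eqref{e1.16}, \eqref{e1.17}, \eqref{e1.15}, \eqref{e1.15a}, \eqref{e1.15b} (the paper handles \eqref{e1.16}--\eqref{e1.17} by citing Corollary 3.3 of \cite{HR}, checks concavity of $F$ and $F^{*}$ by ``direct computation,'' and records $f_1=f_2=n\arctan t$ in the preamble to Theorem \ref{t1.1}), then apply Theorem \ref{t1.1} plus Corollary \ref{r1.2}. Your observation that $\arctan(\lambda^{-1})=\pi/2-\arctan\lambda$ makes $F^{*}=F-n\pi/2$ on $\Gamma_+$ is a cleaner way to see the concavity of $F^{*}$ than an independent second-derivative computation. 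One small flaw worth flagging: the parenthetical recipe for $u_0$ (``optimal transport potential plus a large multiple of $|x|^2/2$'') would replace $Du_0$ by $Du_0+x$ and so destroy the constraint $Du_0(\Omega)=\tilde\Omega$; no additive quadratic is needed or allowed. The Brenier/Caffarelli potential between the uniform measures on $\Omega$ and $\tilde\Omega$ is already uniformly convex and smooth up to the boundary (equivalently, one can use the Delano\"e/Caffarelli/Urbas solution of the Monge--Amp\`ere second boundary value problem as the initial datum), so the existence of a valid $u_0$ is standard and not a gap in the argument, but the ``$+|x|^2/2$'' clause should be dropped.
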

By \cite{SM} and \cite{MW}, we present the similar special Lagrangian  diffeomorphism problem in the following.
The two convex domains $\Omega$, $\tilde{\Omega}\subset \mathbb{R}^{n}$ with smooth boundary are fixed. Given a diffeomorphism $f$ : $\Omega \longrightarrow \tilde{\Omega}$,
 the graph $\Sigma=\{(x,f(x)): x\in \Omega\}\subset \mathbb{R}^{2n}_{n}$ is considered.

 Question: How to find a diffeomorphism $f$ : $\Omega \longrightarrow \tilde{\Omega}$ such that
 $\Sigma$ is a special Lagrangian graph in $\mathbb{R}^{2n}_{n}$. By \cite{Xin}, the graph $\sum$ is special Lagrangian  if and only if
 there is a convex potential function $u: \Omega\rightarrow \mathbb{R}$ such that $f=D u$, and
 $$\ln\det D^{2}u=c,$$
 where $c$ is some constant.
Hence, we present the problem as
\begin{equation}\label{e6.2}
\left\{ \begin{aligned}\ln\det D^{2}u&=c,
&  x\in \Omega, \\
Du(\Omega)&=\tilde{\Omega}.
\end{aligned} \right.
\end{equation}
In 1997, J.Urbas proved the existence and uniqueness  of global smooth convex solutions to (\ref{e6.2}), i.e.,
\begin{theorem}(see Theorem 1.1 in \cite{JU})\label{t6.2}
Let $\Omega$, $\tilde{\Omega}$ be bounded, uniformly convex domains with smooth boundary in $\mathbb{R}^{n}$.
  Then  there exist  $u\in  C^{\infty}(\bar{\Omega})$  and the constant $c$ which satisfy (\ref{e6.2})
  where the solution is unique up to additions of constants.
\end{theorem}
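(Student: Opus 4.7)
The plan is to apply Theorem \ref{t1.1} together with Corollary \ref{r1.2}, taking
$$F(\lambda_{1},\ldots,\lambda_{n}) = \sum_{i}\ln\lambda_{i} = \ln\det A$$
on the cone $\Gamma_{+}$. Then equation (\ref{e1.18}) becomes exactly (\ref{e6.2}) with $c=C_{\infty}$, so the task reduces to verifying that this $F$ satisfies all hypotheses of Theorem \ref{t1.1}.

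First I would check the algebraic hypotheses. $F$ is $C^{\infty}$, symmetric in the $\lambda_{i}$, and $\partial F/\partial\lambda_{i}=1/\lambda_{i}>0$ gives monotonicity, so (\ref{e1.4aa}) holds. Concavity of $F$ is immediate: in the eigenvalues, its Hessian is the diagonal matrix $\mathrm{diag}(-1/\lambda_{i}^{2})$. Moreover $F^{*}(A)=-F(A^{-1})=-\sum\ln(1/\lambda_{i})=\sum\ln\lambda_{i}=F(A)$, so $F^{*}=F$ is concave as well, verifying (\ref{e1.15}). For (\ref{e1.15a})--(\ref{e1.15b}) I would take $f_{1}(t)=f_{2}(t)=n\ln t$, which is strictly increasing on $(0,\infty)$; the inequalities $n\ln\lambda_{1}\leq\sum\ln\lambda_{i}\leq n\ln\lambda_{n}$ give (\ref{e1.15a}), and for any $\Phi,\Psi\in\pounds$ the relations $n\ln t\leq\Phi$ and $n\ln t\geq\Psi$ force $t\leq e^{\Phi/n}$ and $t\geq e^{\Psi/n}$ respectively, giving (\ref{e1.15b}).

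The substantive step, modeled on Section 5, is to verify (\ref{e1.16}) and (\ref{e1.17}) along the flow. The analog of Lemma \ref{l5.1} yields $\Theta_{0}\leq\dot{u}\leq\Theta_{1}$ with $\Theta_{0}:=\min_{\bar\Omega}\ln\det D^{2}u_{0}$, $\Theta_{1}:=\max_{\bar\Omega}\ln\det D^{2}u_{0}$. Combined with the pointwise bounds in the previous step, this gives $\lambda_{1}\leq\mu_{1}:=e^{\Theta_{1}/n}$ and $\lambda_{n}\geq\mu_{2}:=e^{\Theta_{0}/n}$, and it also forces $\prod_{i}\lambda_{i}\in[e^{\Theta_{0}},e^{\Theta_{1}}]$. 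I would then bound $\sum 1/\lambda_{i}$ (which equals $\sum\partial F/\partial\lambda_{i}$) and $\sum\lambda_{i}$ (which equals $\sum\lambda_{i}^{2}\partial F/\partial\lambda_{i}$) uniformly from above and below, in the spirit of Corollary \ref{c5.3}. Once this is done, Theorem \ref{t1.1} produces $u^{\infty}\in C^{1+1}(\bar\Omega)\cap C^{4+\alpha}(\Omega)$ solving (\ref{e6.2}) up to additive constants, and Corollary \ref{r1.2} upgrades regularity to $C^{\infty}(\bar\Omega)$ since $F$ is smooth; uniqueness is part of Theorem \ref{t1.1}.

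The hardest point will be the uniform structure bounds in the third step. For the $F_{\pi/4}$ case the derivative $1/(1+\lambda_{i})^{2}$ is automatically bounded above by $1$, making Corollary \ref{c5.3} easy. For $\ln\det$ the derivative $1/\lambda_{i}$ is \emph{not} bounded above on $\Gamma^{+}_{]\mu_{1},\mu_{2}[}$, because $\lambda_{1}$ is only constrained above and can a priori tend to $0$; one must extract a positive lower bound on $\lambda_{1}$ and an upper bound on $\lambda_{n}$ from the flow itself. The natural route is: the $C^{2}$ boundary machinery of Section 3 plus $\det D^{2}u\geq e^{\Theta_{0}}$ together force $\lambda_{1}\geq e^{\Theta_{0}}/\lambda_{n}^{n-1}$ and bound $\lambda_{n}$ uniformly. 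Alternatively, following Remark \ref{r3.9}, one may avoid (\ref{e1.16})--(\ref{e1.17}) entirely and verify the differential inequality (\ref{e3.15}) directly for $\ln\det$, which is the one place these structure conditions are actually used.
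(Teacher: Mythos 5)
Your proposal correctly identifies that with $F=\ln\det$ the structure conditions (\ref{e1.16})--(\ref{e1.17}) cannot hold on $\Gamma^{+}_{]\mu_{1},\mu_{2}[}$, because $\sum\partial F/\partial\lambda_{i}=\sum1/\lambda_{i}$ blows up as $\lambda_{1}\to 0$ and $\sum\lambda_{i}^{2}\partial F/\partial\lambda_{i}=\sum\lambda_{i}$ blows up as $\lambda_{n}\to\infty$, and you correctly note that the escape hatch is Remark~\ref{r3.9}. That is indeed the route the paper takes. However, your two suggestions for dealing with this are not both viable, and the one that works is left as a black box.

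Your ``natural route'' --- using the $C^{2}$ boundary machinery of Section~3 to pin down $\lambda_{1}$ and $\lambda_{n}$ and then recovering (\ref{e1.16})--(\ref{e1.17}) --- is circular: the boundary estimates in Lemmas~\ref{l3.4}, \ref{l3.6}, \ref{l3.7} are precisely the steps whose proofs invoke (\ref{e1.16})--(\ref{e1.17}) (through the inequality (\ref{e3.15})), so you cannot use their conclusions to establish those hypotheses. Your alternative --- ``verify (\ref{e3.15}) directly'' --- is the correct strategy but hides the essential point: for $\ln\det$ the inequality $Lv\leq C\sum F^{ii}$ \emph{fails} for the barrier $v=\langle\beta,\nu\rangle+h(Du)$ used in Lemma~\ref{l3.4}, because the term $h_{p_{k}p_{l}p_{m}}\nu_{k}F^{ij}u_{li}u_{mj}$ in (\ref{e3.100}) is of order $\sum F^{ii}\lambda_{i}^{2}=\sum\lambda_{i}$, which has no a priori bound in terms of $\sum F^{ii}=\sum 1/\lambda_{i}$. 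The paper instead modifies the barrier to $v=\langle\beta,\nu\rangle+Ah(Du)$ (Urbas's device, carried out in Lemma~8.1 of Schn\"urer--Smoczyk): the extra term $A\,h_{p_{k}p_{l}}F^{ij}u_{ki}u_{lj}\leq -A\theta\sum F^{ii}\lambda_{i}^{2}$, supplied by the strict concavity of $h$ and by taking $A$ large, absorbs the bad cubic term and yields (\ref{e3.15}). Without this modification --- which your proposal never mentions --- the ``direct verification'' of (\ref{e3.15}) simply does not go through, and this is the one genuine new idea the $\ln\det$ case requires beyond Theorem~\ref{t1.1}. The remainder of your argument (choice of $f_{1}=f_{2}=n\ln t$, concavity of $F$ and $F^{*}$, passage to Corollary~\ref{r1.2} for smoothness) matches the paper and is correct.
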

Later, O.C. Schn$\ddot{\text{u}}$rer and K. Smoczyk also obtained the above result by parabolic methods \cite{OK}.
In fact, we can consider  Lagrangian mean curvature flow in  pseudo-Euclidean space $\mathbb{R}^{2n}_{n}$ (cf.\cite{HB}) with second  boundary condition:
\begin{equation}\label{e6.3}
\left\{ \begin{aligned}\frac{\partial u}{\partial t}&=\ln\det D^{2}u,
& t>0,\quad x\in \Omega, \\
Du(\Omega)&=\tilde{\Omega}, &t>0,\qquad\qquad\\
 u&=u_{0}, & t=0,\quad x\in \Omega.
\end{aligned} \right.
\end{equation}
If we define
\begin{equation*}\label{e6.1}
F(\lambda_{1},\lambda_{2},\cdots, \lambda_{n}):=F_{\tau}(\lambda_{1},\lambda_{2},\cdots, \lambda_{n})|_{\tau=0}=\sum_{i}\ln\lambda_{i},
\end{equation*}
then a direct computation shows that  $F$ and $F^{*}$ are concave on the cone  $\Gamma_{+}$.
Here we need modify $v$ slightly which occur in Lemma \ref{e3.4}, i.e.,
$$v=\langle\beta, \nu\rangle+Ah(Du)$$
which was introduced by J.Urbas \cite{JU}.
Using the computation of Lemma 8.1 in \cite{OK}, we  have
\begin{equation*}
Lv\leq C\sum F^{ii},
\end{equation*}
where $L\triangleq F^{ij}\partial_{ij}-\partial_{t}$ as before.
By Remark \ref{r3.9}, we also obtain Theorem \ref{t6.2} as a corollary of Theorem \ref{t1.1}.

In summary, by Theorem \ref{t1.1},  we obtain the existence and uniqueness  of global smooth convex solutions to (\ref{e1.12}), i.e.,
\begin{equation*}
\left\{ \begin{aligned}F_{\tau}(D^{2}u)&=c,
&  x\in \Omega, \\
Du(\Omega)&=\tilde{\Omega},
\end{aligned} \right.
\end{equation*}
for $\tau=0, \frac{\pi}{4}$, $\frac{\pi}{2}$. Finally, we give another version of the above results as following
\begin{theorem}
Let $\Omega$, $\tilde{\Omega}$ be bounded, uniformly convex domains with smooth boundary in $\mathbb{R}^{n}$. Then
there exist  smooth diffeomorphisms $f_{\tau}$: $\Omega\rightarrow\tilde{\Omega}$ such that
 $$\Sigma=\{(x,f_{\tau}(x))|x\in \Omega\}$$
are special Lagrangian graphs in ($\mathbb{R}^{n}\times\mathbb{R}^{n}, g_{\tau}$) for $\tau=0, \frac{\pi}{4}$, $\frac{\pi}{2}$.
\end{theorem}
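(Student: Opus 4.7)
The plan is to treat the three values $\tau=0, \tfrac{\pi}{4}, \tfrac{\pi}{2}$ separately and reduce each one to an existence result for the second boundary value problem $F_\tau(D^2u)=c$, $Du(\Omega)=\tilde\Omega$ that has already been established in the paper. Once a smooth uniformly convex solution $u_\tau$ is produced, the map $f_\tau := Du_\tau$ will automatically be the desired special Lagrangian diffeomorphism by Definition \ref{d1.1}, provided we verify the diffeomorphism property.

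For $\tau=\tfrac{\pi}{2}$, I would invoke Theorem \ref{t6.1} (which the paper re-derives from Theorem \ref{t1.1} by checking that $F(\lambda)=\sum_i\arctan\lambda_i$ satisfies the structural hypotheses $(\ref{e1.4aa})$, $(\ref{e1.16})$, $(\ref{e1.17})$, $(\ref{e1.15})$, $(\ref{e1.15a})$, $(\ref{e1.15b})$ via Corollary 3.3 of \cite{HR}). For $\tau=\tfrac{\pi}{4}$, Theorem \ref{t1.2}, which itself is deduced from Theorem \ref{t1.1} applied to $-F_{\pi/4}$ after verifying concavity of $F$ and $F^*$ (Lemma \ref{l5.0}) and the trapping estimates of Lemma \ref{l5.2} and Corollary \ref{c5.3}. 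For $\tau=0$, Theorem \ref{t6.2}; the paper indicates this is also obtainable from Theorem \ref{t1.1} after a minor modification of the barrier $v$ (replacing $v=\langle\beta,\nu\rangle+h(Du)$ by $v=\langle\beta,\nu\rangle+Ah(Du)$ as in \cite{JU}), and this modification together with Remark \ref{r3.9} fits $F_0=\sum_i\ln\lambda_i$ into the framework.

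Given the convex solution $u_\tau\in C^\infty(\bar\Omega)$, I would define $f_\tau:=Du_\tau:\Omega\to\mathbb{R}^n$. Strict convexity of $u_\tau$ (guaranteed by the uniform eigenvalue bounds of Lemma \ref{l3.10} in the parabolic approach, or by the elliptic estimates in Theorem \ref{t6.1}, \ref{t6.2}) ensures that $Df_\tau=D^2u_\tau$ is positive definite, so $f_\tau$ is a local $C^\infty$ diffeomorphism. The boundary condition $Du_\tau(\Omega)=\tilde\Omega$, combined with injectivity coming from strict convexity (two distinct points in $\Omega$ would yield distinct gradients since $u_\tau$ is strictly convex), promotes $f_\tau$ to a global smooth diffeomorphism $\Omega\to\tilde\Omega$. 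Finally, $u_\tau$ satisfies $F_\tau(D^2u_\tau)=c$ on $\Omega$, so by Definition \ref{d1.1} the graph $\Sigma=\{(x,f_\tau(x)):x\in\Omega\}$ is a special Lagrangian graph in $(\mathbb{R}^n\times\mathbb{R}^n,g_\tau)$.

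There is no real obstacle here — this theorem is a synthesis statement that packages the three previously proved existence results into a uniform geometric conclusion. The only items worth writing out carefully are the reduction of the $\tau=0$ case to the main Theorem \ref{t1.1} (noting the modified barrier per Remark \ref{r3.9}) and the verification that $f_\tau=Du_\tau$ is a global diffeomorphism from $\Omega$ onto $\tilde\Omega$, which follows from strict convexity together with $Du_\tau(\Omega)=\tilde\Omega$.
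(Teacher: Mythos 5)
Your proposal is correct and matches the paper's intended argument: the theorem is a synthesis of Theorems \ref{t6.1}, \ref{t6.2}, and \ref{t1.2} (via Corollary \ref{c1.4}), and the paper does not write out a separate proof. Your additional care in verifying that $f_\tau = Du_\tau$ is a global diffeomorphism --- injectivity from the monotonicity of the gradient map of a strictly convex function, surjectivity from the second boundary condition $Du_\tau(\Omega)=\tilde\Omega$, local invertibility from $D^2u_\tau>0$ --- fills in a step the paper leaves implicit, and your note that the $\tau=0$ case relies on the modified barrier $v=\langle\beta,\nu\rangle+Ah(Du)$ together with Remark \ref{r3.9} (rather than conditions (\ref{e1.16})--(\ref{e1.17}) directly, which fail for $\sum_i\ln\lambda_i$ on the relevant cone) correctly reflects the adaptation the paper makes in Section 6.
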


\vspace{5mm}

{\bf Acknowledgments:} The authors would like to thank  referees for useful
comments, which improve the paper.

\end{document}